\newtheorem{theorem}{Theorem}[subsection]
\newtheorem{proposition}[theorem]{Proposition}
\newtheorem{lemma}[theorem]{Lemma}
\newtheorem{example}[theorem]{Example}
\newtheorem{corollary}[theorem]{Corollary}
\newtheorem{assertion}[theorem]{Assertion}
\newtheorem{definition}[theorem]{Definition}
\renewcommand{\subsection}{\@startsection{subsection}{1}
{0pt}{3.25ex plus 1ex minus.2ex}{-1em}{\normalfont\normalsize\bf}}
\begin{document}

\title{{\bf Regularly limited, Grothendieck, 
and Dunford--Pettis operators}}\date{}
\maketitle
\author{\centering{{Safak Alpay$^{1}$, Eduard Emelyanov$^{2}$, Svetlana Gorokhova $^{3}$\\ 
\small $1$ Middle East Technical University, Ankara, Turkey\\ 
\small $2$ Sobolev Institute of Mathematics, Novosibirsk, Russia\\ 
\small $3$ Uznyj matematiceskij institut VNC RAN, Vladikavkaz, Russia}
\abstract{We introduce and study the enveloping norms of regularly 
$\cal{P}$-operators, where $\cal{P}$ is an "almost" 
version of limited, Grothendieck, and of Dunford--Pettis 
operators in Banach lattices. Several further topics related to these 
operators are investigated.}
\vspace{5mm}

{\bf Keywords:} Banach lattice, regularly $\cal{P}$-operator, enveloping norm,
limited operator, Grothendieck operator, Dunford--Pettis operator.

{\bf MSC2020:} {\normalsize 46B25, 46B42, 46B50, 47B60}

}}
\bigskip
\bigskip

Various types of compactness in Banach spaces were in streamline 
of functional analysis in the second half of the 20th century.  
At some point, in order to diversify the classes of compact operators, 
the Banach lattice structure had come in play by Meyer-Nieberg, 
Dodds, Fremlin, Aliprantis, Wickstead, and others.
In the present paper, we continue this line of study with main 
emphasis on regularity of operators under the investigation.
We introduce enveloping norms on spaces of regularly ${\cal P}$-operators
and give conditions under which these spaces are complete under their
enveloping norms.

%%%%%%%%%%%%%%%%%%%%
\section{Introduction and Preliminaries}
%%%%%%%%%%%%%%%%%%%%

Throughout the paper, all vector spaces are real and all operators are linear; $X$, $Y$, and $Z$ 
denote Banach spaces; $E$, $F$, and $G$ denote Banach lattices. A subset $A$ of $X$ is called
{\em bounded} if $A$ is norm bounded. We denote by $B_X$ the closed unit ball of $X$,
by ${\cal L}(X,Y)$ ($\text{\rm L}(X,Y)$, $\text{\rm W}(X,Y)$, $\text{\rm K}(X,Y)$) 
the space of all (continuous, weakly compact, compact) operators 
from $X$ to $Y$, by $E_+$ the positive cone of $E$, by $\text{sol}(A)$ the solid hull 
of $A\subseteq E$, and by $E^a=\{x\in E: |x|\ge x_n\downarrow 0\Rightarrow\|x_n\|\to 0\}$ 
the $\text{\rm o}$-continuous part of $E$. 

\subsection{}
A bounded subset $A$ of $X$ is called
{\em limited} \cite{Diestel}  
(resp. {\em Dunford--Pettis} or a \text{\rm DP}-{\em set} \cite{Andr}) if each 
\text{\rm w}$^\ast$-null (resp. \text{\rm w}-null) sequence in $X'$ is 
uniformly null on $A$. Each limited set is a \text{\rm DP}-set.
All subsets of the {\em closed absolute convex hull} \ 
$\overline{\text{\rm aco}}(A_1+A_2)$ 
of $A_1+A_2$ are limited (resp. \text{\rm DP}) whenever 
$A_1$ and $A_2$ are limited (resp. \text{\rm DP}).
The closed unit ball $B_{c_0}$ of ${c_0}$ is a limited subset of
$\ell^\infty$ by Phillip's Lemma (cf. \cite[Thm.4.67]{AlBu}).
A subset $A$ of $X$ is limited iff $T(A)$ is relatively compact for  
every continuous operator $T:X\to c_0$ (cf. \cite[p.56]{BD}).
In particular, each relatively compact subset $A$ of $X$ is limited. 
By \cite[Thm.5.98]{AlBu}, 
a bounded subset $A$ of $X$ is \text{\rm DP} iff $T(A)$ is a relatively compact 
subset of $Y$ for each $T\in\text{\rm W}(X,Y)$.
A bounded subset $B$ of $X'$ is called an \text{\rm L}-{\em set} 
if each \text{\rm w}-null sequence in $X$ is uniformly null on $B$ (cf. \cite{Ghenciu}).

\subsection{}
We include a proof of the following certainly well
known elementary fact, for which we did not find an appropriate reference.

\begin{proposition}\label{uniform convergence on A}
Let $A\subseteq X$ and $B\subseteq X'$ be nonempty subsets. Then$:$
\begin{enumerate}[{\em i)}] 
\item
A sequence $(f_n)$ in $X'$ is uniformly null on $A$
iff $f_n(a_n)\to 0$ for each sequence $(a_n)$ in $A$. 
\item
A sequence $(x_n)$ in $X$ is uniformly null on $B$
iff $b_n(x_n)\to 0$ for each sequence $(b_n)$ in $B$.
\end{enumerate}
\end{proposition}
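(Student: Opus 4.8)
The plan is to establish each biconditional by combining a trivial forward implication with a contrapositive argument for the converse. Since statement (ii) is the exact formal dual of (i) — interchange the roles of $X$ and $X'$ and read $b(x)$ for $f(a)$ — I would prove (i) in full detail and then note that (ii) follows by the identical argument. Recall that $(f_n)$ being \emph{uniformly null on $A$} means $\sup_{a\in A}|f_n(a)|\to 0$.

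For the forward implication of (i), given any sequence $(a_n)$ in $A$ one has the pointwise bound $|f_n(a_n)|\le\sup_{a\in A}|f_n(a)|$, so $\sup_{a\in A}|f_n(a)|\to 0$ forces $f_n(a_n)\to 0$ at once. For the converse I would argue by contraposition. Suppose $(f_n)$ is \emph{not} uniformly null on $A$, i.e. $\sup_{a\in A}|f_n(a)|\not\to 0$; then there exist $\varepsilon>0$ and a strictly increasing sequence of indices $(n_k)$ with $\sup_{a\in A}|f_{n_k}(a)|>\varepsilon$ for every $k$. By definition of the supremum, choose for each $k$ an element $a_{n_k}\in A$ with $|f_{n_k}(a_{n_k})|>\varepsilon$. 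I then assemble a single sequence $(a_n)_{n\in\mathbb{N}}$ in $A$ by putting $a_n:=a_{n_k}$ when $n=n_k$, and letting $a_n$ be an arbitrary fixed element of $A$ (which exists because $A\neq\varnothing$) for the remaining indices. Since $|f_{n_k}(a_{n_k})|>\varepsilon$ for all $k$, we have $f_n(a_n)\not\to 0$, contradicting the hypothesis. This proves (i), and the verbatim dual argument, using $|b_n(x_n)|\le\sup_{b\in B}|b(x_n)|$ for the forward direction, gives (ii).

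The argument is entirely elementary, so there is no genuine obstacle; the only place the stated nonemptiness hypothesis is used is the step of filling in the indices outside $(n_k)$ in order to produce a bona fide sequence indexed by all of $\mathbb{N}$. (One could instead pass to the subsequence $(f_{n_k})$ and $(a_{n_k})$ throughout, but retaining the original indexing is cleaner and makes the contradiction with ``for each sequence'' immediate.) If desired, the same conclusion can be phrased without contraposition, observing that the negation of $f_n(a_n)\to 0$ for \emph{some} sequence is exactly the failure of uniform nullity; but the contrapositive presentation is the most transparent.
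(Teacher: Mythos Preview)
Your proof is correct and follows essentially the same approach as the paper: the forward direction is dismissed as trivial, and the converse is obtained by extracting a subsequence $(n_k)$ along which the supremum stays bounded below, choosing witnesses $a_{n_k}\in A$, and filling in the remaining indices with a fixed element of $A$ to build a sequence with $f_n(a_n)\not\to 0$. The paper phrases this as a proof by contradiction (using a $\limsup$ and a slightly fussier $3\varepsilon/2\varepsilon/\varepsilon$ chain), and fills in with $a_{n_1}$ rather than an arbitrary fixed element, but these are cosmetic differences only.
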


\begin{proof}
i) The necessity is obvious. 
Let $f_n(a_n)\to 0$ for each $(a_n)$ in $A$. Suppose
$\limsup\limits_{n\to\infty}(\sup\limits_{a\in A}|f_n(a)|)\ge 3\varepsilon>0$. 
Choose an increasing sequence $(n_k)$ satisfying
$\sup\limits_{a\in A}|f_{n_k}(a)|\ge 2\varepsilon$ for all $k\in\mathbb{N}$, and pick
$a_{n_k}\in A$ with $|f_{n_k}(a_{n_k})|\ge\varepsilon$ for each $k$. Letting
$a_n:=a_{n_1}$ for all $n\in\mathbb{N}\setminus\{n_k: k\in\mathbb{N}\}$ gives
$f_n(a_n)\not\to 0$. The obtained contradiction proves that
$(f_n)$ is uniformly null on $A$.\\
ii) The proof is similar.
\end{proof}
\noindent
The next fact is a direct consequence of 
Proposition \ref{uniform convergence on A}.

\begin{assertion}\label{limited and DP sets}
Let $A$ be a bounded subset of $X$. Then$:$
\begin{enumerate}[{\em (i)}]
\item
$A$ is limited iff $f_n(a_n)\to 0$ for all 
\text{\rm w}$^\ast$-null $(f_n)$ in $X'$ and all $(a_n)$ in $A$.
\item
$A$ is \text{\rm DP} iff $f_n(a_n)\to 0$ for all  
\text{\rm w}-null $(f_n)$ in $X'$ and all $(a_n)$ in $A$.
\end{enumerate}
A bounded subset $B$ of $X'$ is an \text{\rm L}-set
iff $b_n(x_n)\to 0$ for all $(b_n)$ in $B$ and all 
\text{\rm w}-null $(x_n)$ in $X$.
\end{assertion}

\begin{definition}\label{Main Schur property} 
{\em A Banach space $X$ has:
\begin{enumerate}[a)]
\item 
the {\em Schur property} (or $X\in\text{\rm (SP)}$)
if each \text{\rm w}-null sequence in $X$ is norm null (cf. \cite[p.207]{AlBu});
\item 
the {\em Grothendieck property} (or $X\in\text{\rm (GP)}$) 
if each \text{\rm w}$^\ast$-null sequence in $X'$ is \text{\rm w}-null (cf. \cite[p.760]{Wnuk3});
\item
the {\em Dunford--Pettis property}  (or $X\in\text{\rm (DPP)}$) if $f_n(x_n)\to 0$ for each 
\text{\rm w}-null $(x_n)$ in $X$ and each \text{\rm w}-null $(f_n)$ in $X'$ \cite{Gro};
\item 
the {\em Gelfand--Phillips property} (or $X\in\text{\rm (GPP)}$)
if all limited subsets of $X$ are relatively compact (cf. \cite{DU,BD}).
\end{enumerate}}
\end{definition}
\noindent
By \cite{BD}, all separable and all reflexive Banach spaces are in \text{\rm (GPP)}.
It was mentioned in \cite{CCJ} with referring to \cite{Buh} that
a Dedekind $\sigma$-complete Banach lattice $E$ is in $\text{\rm (GPP)}$ 
iff $E$ has $\text{\rm o}$-continuous norm.
In particular, $c_0,\ell^1\in\text{\rm (GPP)}$ 
but $\ell^\infty\not\in\text{\rm (GPP)}$. 

\vspace{3mm}
\noindent
Let ${\cal P}\subseteq {\cal L}(E,F)$. We call elements of ${\cal P}$ by
${\cal P}$-operators and say that ${\cal P}$-operators satisfy the 
{\em domination property} if, 
for each positive $T\in{\cal P}$, it follows from $0\le S\le T$
that $S\in {\cal P}$. An operator $T\in{\cal L}(E,F)$ is called
${\cal P}$-{\em dominated} if $\pm T\le U$ for some  $U\in{\cal P}$.
It is easy to see that, under the assumption ${\cal P}\pm{\cal P}\subseteq{\cal P}\ne\emptyset$,
${\cal P}$-operators satisfy the domination property iff each 
${\cal P}$-dominated operator lies in ${\cal P}$.

\subsection{}
The next three well known facts will be used in the present paper.

\begin{assertion}\label{E' is o-cont} 
{\rm (\cite[Thm.4.59]{AlBu}; \cite[2.4.14]{Mey})
The following are equivalent. 
\begin{enumerate}[(i)] 
\item 
The norm on $E'$ is \text{\rm o}-continuous.
\item 
$E'$ is a KB space.
\item 
Every disjoint bounded sequence in $E$ is \text{\rm w}-null.
\end{enumerate}}
\end{assertion}

\begin{assertion}\label{Dodds-Fremlin}
{\rm (\cite[Cor.2.6, Cor.2.7]{DF})
The following holds. 
\begin{enumerate}[(i)]
\item 
A sequence $(x_n)$ in $E$ is norm null iff $|x_n|$ is $\text{\rm w}$-null  
and $f_n(x_n)\to 0$ for each disjoint bounded $(f_n)$ in $E'_+$.
\item 
A sequence $(f_n)$ in $E'$ is norm null iff $|f_n|$ is $\text{\rm w}^\ast$-null  
and $f_n(x_n)\to 0$ for each disjoint bounded $(x_n)$ in $E_+$.
\end{enumerate}}
\end{assertion}

\begin{assertion}\label{Burkinshaw--Dodds} 
{\rm (cf. \cite[Thm.5.63]{AlBu})
Let $A\subseteq E$ and $B\subseteq E'$ be nonempty bounded sets. 
The following are equivalent.
\begin{enumerate}[(i)]
\item 
Every disjoint sequence in $\text{\rm sol}(A)$ is uniformly null on $B$.
\item 
Every disjoint sequence in $\text{\rm sol}(B)$ is uniformly null on $A$.
\end{enumerate}}
\end{assertion}

\subsection{}
Recall that a Banach lattice $E$ (resp. $E'$) has {\em sequentially} 
\text{\rm w}-{\em continu\-ous} (resp. \text{\rm w}$^\ast$-{\em continuous}) 
{\em lattice operations} if, for each \text{\rm w}-null $(x_n)$ in $E$
(resp. \text{\rm w}$^\ast$-null $(x_n)$ in $E'$), the sequence $(|x_n|)$ 
is \text{\rm w}-null (resp. \text{\rm w}$^\ast$-null). The following
important fact is a direct consequence of \cite[Thm.4.34]{AlBu}.

\begin{assertion}\label{disj w-null is mod w-null}
For every disjoint $\text{\rm w}$-null $(x_n)$ in $E$, 
the sequence $(|x_n|)$ is also \text{\rm w}-null.
\end{assertion}
\noindent
This is no longer true for the $\text{\rm w}^\ast$-topology;
indeed, take $f_n:=e_{2n}-e_{2n+1}\in c'$, then 
$(f_n)$ is disjoint $\text{\rm w}^\ast$-null yet 
$|f_n|({\mathbb 1}_{\mathbb N})\equiv 2$ \cite[Ex.2.1]{CCJ}.

\begin{definition}\label{Schur property}
{\em A Banach lattice $E$ has:
\begin{enumerate}[a)]
\item 
the {\em positive Schur property} (or $E\in\text{\rm (PSP)}$)
if each positive \text{\rm w}-null sequence in $E$ is norm null (cf. \cite{Wnuk3});
\item 
the {\em dual positive Schur property} (or $E\in\text{\rm (DPSP)}$) if each positive 
\text{\rm w}$^\ast$-null sequence in $E'$ is norm null (cf. \cite[Def.3.3]{AEW});
\item 
the {\em dual disjoint Schur property} (or $E\in\text{\rm (DDSP)}$)
if each disjoint \text{\rm w}$^\ast$-null sequence in $E'$ is norm null 
(cf. \cite[Def.3.2)]{MEM} and \cite[Def.2.1.3~d)]{AEG3});
\item
the {\em property} (d) (or $E\in\text{\rm (d)}$) if, 
for every disjoint $\text{\rm w}^\ast$-null $(f_n)$ in $E'$,
the sequence $(|f_n|)$ is also $\text{\rm w}^\ast$-null (cf. \cite[Def.1]{El});
\item 
the {\em positive Grothendieck property} (or $E\in\text{\rm (PGP)}$) if each positive 
\text{\rm w}$^\ast$-null sequence in $E'$ is \text{\rm w}-null (cf. \cite[p.760]{Wnuk3}); 
\item 
the {\em disjoint Grothendieck property} (or $E\in\text{\rm (DGP)}$) 
if each disjoint \text{\rm w}$^\ast$-null sequence in $E'$ is \text{\rm w}-null 
(cf. \cite[Def.4.8)]{MFMA} and \cite[Def.2.1.3~e)]{AEG3}).
\item
the bi-{\em sequence property} (or $E\in\text{\rm (bi-sP)}$
if $f_n(x_n)\to 0$ for each \text{\rm w}$^\ast$-null 
$(f_n)$ in $E'_+$ and each disjoint $\text{\rm w}$-null $(x_n)$ in $E$
(cf. \cite[Def.3.1]{AEW}). 
\end{enumerate}}
\end{definition}
\noindent
By Assertion \ref{disj w-null is mod w-null}, $E\in\text{\rm (bi-sP)}$ iff
$f_n(x_n)\to 0$ for each \text{\rm w}$^\ast$-null 
$(f_n)$ in $E'_+$ and each disjoint $\text{\rm w}$-null $(x_n)$ in $E_+$.
The property \text{\rm (d)} is weaker than the sequential 
$\text{\rm w}^\ast$-continuity of the lattice operations, 
e.g. in $\ell^{\infty}$. It was proved in \cite[Prop.1.4]{Wnuk3}, 
where no name was assigned to the property~(d), that every 
Dedekind $\sigma$-complete $E$ has the property (d), 
and it was observed in \cite[Rem.1.5]{Wnuk3} 
that $\ell^{\infty}/c_0\in\text{\rm (d)}$
but $\ell^{\infty}/c_0$ is not Dedekind $\sigma$-complete.
The next proposition connects \text{\rm (DDSP)}, 
\text{\rm (DPSP)}, \text{\rm (DGP)}, 
and \text{\rm (d)}.

\begin{proposition}\label{DSP and (d)}
For a Banach lattice $E$, the following holds.
\begin{enumerate}[{\em i)}]
\item
$E\in\text{\rm (DDSP)} \ \Longrightarrow \ E\in\text{\rm (d)}$.
\item
$E\in\text{\rm (DGP)} \ \Longrightarrow \ E\in\text{\rm (d)}$.
\item
$E\in\text{\rm (DDSP)} \ \Longrightarrow \ E\in\text{\rm (DPSP)}$.
\item 
If $E\in\text{\rm (d)}$ then
$[E\in\text{\rm (DPSP)} \ \Longrightarrow \ E\in\text{\rm (DDSP)}]$.
\item 
If $E\in\text{\rm (d)}$ then
$[E\in\text{\rm (PGP)} \ \Longrightarrow \ E\in\text{\rm (DGP)}]$.
\end{enumerate}
\end{proposition}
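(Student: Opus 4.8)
The plan is to handle parts (i), (ii), (iv), (v) by one recurring device and to isolate (iii) as the only genuinely harder item. The device is that a disjoint $\text{w}^\ast$-null $(f_n)\subseteq E'$ may be replaced by the \emph{positive} sequence $(|f_n|)$ at no cost, since $\|f_n\|=\||f_n|\|$, provided the relevant convergence passes between $f_n$ and $|f_n|$. For this I first record an elementary domination fact: if $(g_n)$ is positive and $\text{w}$-null (resp.\ $\text{w}^\ast$-null in a dual $E'$) and $|h_n|\le g_n$, then $(h_n)$ inherits the same nullity, because $|\zeta(h_n)|\le|\zeta|(|h_n|)\le|\zeta|(g_n)\to 0$ for every admissible testing functional $\zeta$.

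Parts (i) and (ii) produce property (d) directly. For (i), if $E\in\text{(DDSP)}$ and $(f_n)$ is disjoint $\text{w}^\ast$-null, then $\|f_n\|\to 0$, so $\||f_n|\|=\|f_n\|\to 0$ and $(|f_n|)$ is norm null, hence $\text{w}^\ast$-null. For (ii), if $E\in\text{(DGP)}$ and $(f_n)$ is disjoint $\text{w}^\ast$-null, then $(f_n)$ is $\text{w}$-null in $E'$; applying Assertion \ref{disj w-null is mod w-null} to the Banach lattice $E'$ shows $(|f_n|)$ is $\text{w}$-null, and therefore $\text{w}^\ast$-null. In both cases $E\in\text{(d)}$.

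Parts (iv) and (v) assume $E\in\text{(d)}$, so that for a disjoint $\text{w}^\ast$-null $(f_n)$ the positive sequence $(|f_n|)$ is again $\text{w}^\ast$-null. For (iv), (DPSP) then forces $\||f_n|\|\to 0$, i.e.\ $\|f_n\|\to 0$, whence $E\in\text{(DDSP)}$. For (v), (PGP) makes $(|f_n|)$ $\text{w}$-null, and the domination fact (with $g_n=|f_n|$, $h_n=f_n$, $|h_n|=g_n$) upgrades this to $(f_n)$ being $\text{w}$-null, so $E\in\text{(DGP)}$.

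The remaining and hardest item is (iii), which I argue by contraposition. Suppose $E\in\text{(DDSP)}$ but $E\notin\text{(DPSP)}$; then there is a positive $\text{w}^\ast$-null $(f_n)$ with $\inf_n\|f_n\|>0$. Since $|f_n|=f_n$ is $\text{w}^\ast$-null, Assertion \ref{Dodds-Fremlin}(ii) forces $f_n(x_n)\not\to 0$ for some disjoint bounded $(x_n)\subseteq E_+$; passing to a subsequence, $f_n(x_n)\ge\delta>0$ with $\|x_n\|\le 1$. The crux is a disjointification: using that $E'$ is Dedekind complete and the $x_n$ are pairwise disjoint, I pass to the band components $g_n$ of $f_n$ along $x_n$ (concretely $g_n(y)=\sup_m f_n(y\wedge m x_n)$ for $y\in E_+$), producing a disjoint sequence $(g_n)\subseteq E'_+$ with $0\le g_n\le f_n$ and $g_n(x_n)=f_n(x_n)\ge\delta$. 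Domination by the $\text{w}^\ast$-null $(f_n)$ shows $(g_n)$ is $\text{w}^\ast$-null, while $\|g_n\|\ge g_n(x_n)\ge\delta$ shows it is not norm null, contradicting (DDSP). The delicate point I expect is checking that the $g_n$ are well-defined positive functionals and are genuinely pairwise disjoint in $E'$; this is exactly where the disjointness of $(x_n)$ and the order completeness of $E'$ enter, and it is the only place the argument moves beyond the modulus bookkeeping used for the other four parts.
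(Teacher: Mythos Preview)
Your proofs of (i), (ii), and (v) match the paper's essentially word for word. For (iii) and (iv) the paper gives no argument of its own but cites \cite[Thm.~3.3]{MEM} (cf.\ also \cite[Prop.~2.2.1]{AEG3}); your direct proof of (iv) is the obvious one-liner and is surely what those references contain.

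For (iii) your carrier-component construction is correct: the functionals $g_n(y):=\sup_m f_n(y\wedge m x_n)$ are well-defined elements of $E'_+$ with $0\le g_n\le f_n$ and $g_n(x_n)=f_n(x_n)$, and they are pairwise disjoint in $E'$ because $g_k$ vanishes on the principal ideal generated by $x_n$ when $n\ne k$ (so taking $u=y\wedge m x_n$, $v=y-u$ in the Riesz--Kantorovich formula and letting $m\to\infty$ drives $(g_n\wedge g_k)(y)$ to zero). The ``delicate point'' you flag is therefore genuinely fine. That said, you are working harder than necessary: immediately after this proposition the paper records Assertion~\ref{Wnuk2013}, namely that $E\in\text{(DPSP)}$ iff every \emph{disjoint} $\text{w}^\ast$-null sequence in $E'_+$ is norm null. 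Since (DDSP) trivially yields the right-hand side, (iii) is a one-line consequence of that characterization. Your argument is in effect an inline proof of the nontrivial direction of Assertion~\ref{Wnuk2013}, which is correct but makes (iii) look harder than it is.
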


\begin{proof}
i) 
Let $(f_n)$ be disjoint $\text{\rm w}^\ast$-null in $E'$.
Since $E\in\text{\rm (DDSP)}$ then $(f_n)$ is norm null.
By the norm-continuity of lattice operations, $(|f_n|)$ is norm null
and hence $\text{\rm w}^\ast$-null.

ii) 
Let $(f_n)$ be disjoint $\text{\rm w}^\ast$-null in $E'$.
Since $E\in\text{\rm (DGP)}$ then $(f_n)$ is \text{\rm w}-null in $E'$.
By Assertion \ref{disj w-null is mod w-null}, the sequence $(|f_n|)$ is \text{\rm w}-null
and hence $\text{\rm w}^\ast$-null.

iii) and iv) were proved in \cite[Thm.3.3]{MEM} (cf. also \cite[Prop.2.2.1]{AEG3}).

v) 
Let $(f_n)$ be disjoint $\text{\rm w}^\ast$-null in $E'$. Since $E\in\text{\rm (d)}$
then $(|f_n|)$ is also $\text{\rm w}^\ast$-null. Assuming $E\in\text{\rm (PGP)}$,
we obtain that $(|f_n|)$ is $\text{\rm w}$-null in $E'$. Let $g\in E''$. 
Then $|g(f_n)|\le|g|(|f_n|)\to 0$ for all $g\in E''$, and $(f_n)$
is \text{\rm w}-null.
\end{proof}
\noindent
The example mentioned after Assertion \ref{disj w-null is mod w-null}
shows that the property~(d) cannot be dropped in iv) and 
in v) of Proposition~\ref{DSP and (d)}. 

\subsection{}
By Proposition \ref{uniform convergence on A}~i),
$E\in\text{\rm (bi-sP)}$ iff each $\text{\rm w}^\ast$-null $(f_n)$
in $E'_+$ is uniformly null on each dis\-joint \text{\rm w}-null $(x_n)$
in $E_+$ (cf. \cite[Thm.4.2]{AEW}).
The next fact \cite[Prop.2.3]{Wnuk3} 
is essentially based on Assertion \ref{Dodds-Fremlin}(ii).

\begin{assertion}\label{Wnuk2013}
$E\in\text{\rm (DPSP)}$ iff every disjoint \text{\rm w}$^\ast$-null 
sequence in $E'_+$ is norm null.
\end{assertion}
\noindent
It follows $\text{\rm (DPSP)}\Longrightarrow \text{\rm (PGP)}\cap\text{\rm (DGP)}$. 
In \text{\rm AM}-spaces, \text{\rm (PGP)} $\Longleftrightarrow$ \text{\rm (DPSP)} 
by \cite[Prop.4.1]{Wnuk3}. 
\text{\rm (PSP)} $\Longleftrightarrow$ \text{\rm (SP)} in discrete Banach lattices 
\cite[p.19]{Wnuk3} and
\text{\rm (PGP)} $\Longleftrightarrow$ \text{\rm (GP)} in Banach lattices 
with the interpolation property by \cite[Thm.5.3.13]{Mey}. 
Recall that a subset $A$ of $E$ is {\em almost order bounded} if, 
for each $\varepsilon>0$, 
there is $x\in E_+$ with $A \subseteq [-x,x]+\varepsilon B_E$.

\begin{assertion}\label{Schur}
{\em (\cite[Cor.3.6.8]{Mey}, \cite[Thm.7]{Wnuk1})}
For a Banach lattice $E$, the following are equivalent.
\begin{enumerate}[\em (i)]
\item 
$E\in\text{\rm (PSP)}$.
\item 
Each disjoint $\text{\rm w}$-null sequence in $E$ is norm null.
\item 
Each disjoint $\text{\rm w}$-null sequence in $E_+$ is norm null.
\item 
Each disjoint sequence in the solid hull of every relatively 
$\text{\rm w}$-compact subset of $E$ is norm-null.
\item 
Almost order bounded subsets of $E$ coincide with 
relatively $\text{\rm w}$-compact subsets of $E$.
\end{enumerate}
\end{assertion}

\begin{definition}\label{semi-compact operator}
{\em An operator $T:X\to F$ is {\em semi-compact} 
if it carries bounded subsets of $X$ onto 
almost order bounded subsets of $F$. 
We denote by $\text{\rm semi-K}(X,F)$ the space 
of all semi-compact operators from $X$ to $F$.}
\end{definition}
\noindent
It is well known that 
$\text{\rm semi-K}(X,F)\subseteq\text{\rm W}(X,F)$ for every $X$,
whenever the norm in $F$ is o-continuous.
In view of Assertion \ref{Schur}(v), if $F\in\text{\rm (PSP)}$ then
$\text{\rm semi-K}(X,F)=\text{\rm W}(X,F)$ for every $X$.

By \cite[Def.1.1]{AAT}: a subset $A\subseteq E$ is called b-{\em bounded} 
if $i(A)$ is order bounded in $E''$, where $i:E\to E''$ is the natural embedding 
of $E$ into its bi-dual $E''$; and $E$ has b-{\em property}, if every b-bounded 
subset of $E$ is order bounded. The dual $E'$ has the b-property for each $E$.
An operator $T:E\to Y$ is said to be \text{\rm o-w}-compact
(resp. \text{\rm b-w}-compact) if $T$ carries order intervals
(resp. \text{\rm b}-bounded sets) of $E$ onto relatively 
\text{\rm w}-compact subsets of $Y$. Let $\text{\rm o-W}(E,Y)$ 
(resp. $\text{\rm b-W}(E,Y)$) be the space of all \text{\rm o-w}-compact
(resp. \text{\rm b-w}-compact) operators from $E$ to $Y$.  
Then $\text{\rm W}(E,Y)\subseteq\text{\rm b-W}(E,Y)\subseteq\text{\rm o-W}(E,Y)$ 
and all the inclusions are proper in general \cite{AAT}.

\subsection{}
The following definition was introduced in \cite[Def.0.1]{Mey0}.

\begin{definition}\label{LWC-subsets}
{\em 
A bounded subset $L$ of $F$ is called an \text{\rm LW}-subset if every 
disjoint sequence in $\text{\rm sol}(L)$ is norm null.}
\end{definition}
\noindent
\text{\rm LW}-sets are relatively 
\text{\rm w}-compact \cite[Prop.3.6.5.]{Mey}.
In view of Assertion~\ref{Schur}, relatively 
\text{\rm w}-compact subsets of $E$ are
\text{\rm LW}-sets iff $E\in\text{\rm (PSP)}$. 
The next characterization of LW-sets 
(see \cite[Prop.3.6.2]{Mey} and \cite[Lm.2.2]{BLM0}) is useful.
Notice that the equivalence (i)$\Longleftrightarrow$(iv) below is a 
direct consequence of Proposition~\ref{uniform convergence on A}
and Assertion~\ref{Burkinshaw--Dodds}.

\begin{assertion}\label{Meyer 3.6.2}
For a nonempty bounded subset $L$ of $E$, 
the following are equivalent.
\begin{enumerate}[\em (i)]
\item 
$L$ is an \text{\rm LW}-set.
\item 
Every disjoint bounded $(f_n)$ in $E'$ is uniformly null on $L$.
\item 
For every $\varepsilon>0$, there is 
$u_\varepsilon\in (E^a)_+$ such that
$L\subseteq[-u_\varepsilon,u_\varepsilon]+\varepsilon B_E$.
\item 
$f_n(x_n)\to 0$ for every disjoint bounded $(f_n)$ 
in $E'$ and every $(x_n)$ in $L$.
\end{enumerate}
\end{assertion}
\noindent
We shall use also the following two definitions.

\begin{definition}\label{Main LW operator}
{\em A continuous operator $T:X\to F$ is called$:$
\begin{enumerate}[a)]
\item  
$\text{\rm L}$-{\em weakly compact} ($T$ is an $\text{\rm LW}$-operator) 
if $T$ carries bounded subsets of $X$ onto $\text{\rm LW}$-subsets 
of $F$ \cite[Def.1.iii)]{Mey0};
\item 
{\em almost} $\text{\rm L}$-{\em weakly compact} 
($T$ is an $\text{\rm a-LW}$-operator) 
if $T$ carries relatively $\text{\rm w}$-compact subsets of $X$ onto 
$\text{\rm LW}$-subsets of $F$ 
\cite[Def.2.1]{BLM1}.
\end{enumerate}}
{\em A continuous operator $T:E\to F$ is called$:$
\begin{enumerate}[c)]
\item  
{\em order} $\text{\rm L}$-{\em weakly compact} 
($T$ is an $\text{\rm o-LW}$-operator) 
if $T$ carries order bounded subsets of $E$ onto 
$\text{\rm LW}$-subsets of $F$ \cite[Def.2.1]{BLM2}$;$
\end{enumerate}
\begin{enumerate}[d)]
\item  
$\text{\rm b-L}$-{\em weakly compact} ($T$ is a $\text{\rm b-LW}$-operator) 
if $T$ carries b-bounded subsets of $E$ onto $\text{\rm LW}$-subsets 
of $F$ \cite[Def.2.1]{BLM0}.
\end{enumerate}}
\end{definition}

\begin{definition}\label{Main MW operator}
{\em A continuous operator $T:E\to Y$ is called$:$
\begin{enumerate}[a)]
\item 
M-{\em weakly compact} ($T$ is an $\text{\rm MW}$-operator) if $\|Tx_n\|\to 0$ 
for every disjoint bounded $(x_n)$ in $E$ \cite[Def.1.iv)]{Mey0}$;$
\item 
{\em almost $\text{\rm M}$-weakly compact} 
($T$ is an $\text{\rm a-MW}$-operator)  
if $f_n(Tx_n)\to 0$ for every $\text{\rm w}$-convergent 
$(f_n)$ in $Y'$ and every disjoint 
bounded $(x_n)$ in $E$ \cite[Def.2.2]{BLM1}.
\end{enumerate}}
{\em A continuous operator $T:E\to F$ is called$:$
\begin{enumerate}[c)]
\item 
{\em order} \text{\rm M}-{\em weakly compact} 
($T$ is an \text{\rm o-MW}-operator) 
if $f_n(Tx_n)\to 0$ for every order bounded $(f_n)$ in $F'$ and every 
disjoint bounded $(x_n)$ in $E$ \cite[Def.2.2]{BLM0}.
\end{enumerate}}
\end{definition}

\begin{assertion}\label{Meyer-Nieberg}
{\em \cite[Satz.3]{Mey0}}
\begin{enumerate}[\em (i)]
\item 
$S'\in\text{\rm LW}(Y',E')\Longleftrightarrow S\in\text{\rm MW}(E,Y)$.
\item 
$T'\in\text{\rm MW}(F',X')\Longleftrightarrow T\in\text{\rm LW}(X,F)$.
\end{enumerate}
\end{assertion}

\begin{assertion}\label{Bouras--Lhaimer--Moussa}
{\em \cite[Thm.2.5]{BLM1}}.
\begin{enumerate}[\em (i)]
\item 
$S'\in\text{\rm a-LW}(Y',E')\Longleftrightarrow S\in\text{\rm a-MW}(E,Y)$.
\item 
$T'\in\text{\rm a-MW}(F',X')\Longrightarrow T\in\text{\rm a-LW}(X,F)$.
\end{enumerate}
\end{assertion}

\begin{assertion}\label{Bouras--Lhaimer--Moussa - order}
{\em (\cite[Thm.2.3]{BLM0} and \cite[Thm.2.3]{BLM2})}
\begin{enumerate}[\em (i)]
\item 
$S'\in\text{\rm o-LW}(F',E')\Longleftrightarrow S'\in\text{\rm b-LW}(F',E')
\Longleftrightarrow S\in\text{\rm o-MW}(E,F)$.
\item 
$T'\in\text{\rm o-MW}(F',E')\Longrightarrow T\in\text{\rm b-LW}(E,F)
\Longrightarrow T\in\text{\rm o-LW}(E,F)$.
\end{enumerate}
\end{assertion}
\noindent
In general, the implications in Assertion 
\ref{Bouras--Lhaimer--Moussa}(ii) and in 
Assertion \ref{Bouras--Lhaimer--Moussa - order}(ii) 
are proper (see \cite[Rem.2.1]{BLM1} and \cite[Rem.2.3]{BLM2}).
As an immediate consequence of Assertions \ref{Meyer-Nieberg}, 
\ref{Bouras--Lhaimer--Moussa}, 
and \ref{Bouras--Lhaimer--Moussa - order} 
we have the following (semi-) bi-duality.

\begin{assertion}\label{(semi-) bi-duality}
\begin{enumerate}[\em (i)]
\item 
$T''\in\text{\rm LW}(X'',F'')\Longleftrightarrow T\in\text{\rm LW}(X,F)$.
\item 
$S''\in\text{\rm MW}(E'',Y'')\Longleftrightarrow S\in\text{\rm MW}(E,Y)$.
\item 
$T''\in\text{\rm a-LW}(X'',F'')\Longrightarrow T\in\text{\rm a-LW}(X,F)$.
\item 
$S''\in\text{\rm a-MW}(E'',Y'')\Longrightarrow S\in\text{\rm a-MW}(E,Y)$.
\item 
$T''\in\text{\rm b-LW}(E'',F'')\Longleftrightarrow T''\in\text{\rm o-LW}(E'',F'')
\Longrightarrow T\in\text{\rm b-LW}(E,F)$.
\item 
$S''\in\text{\rm o-MW}(E'',F'')\Longrightarrow S'\in\text{\rm b-LW}(F',E')
\Longleftrightarrow S\in\text{\rm o-MW}(E,F)$.
\end{enumerate}
\end{assertion} 

\subsection{}
The range of an LW-operator is contained 
in a Banach lattice with o-continuous norm (cf. \cite[Thm.5.66]{AlBu}). 
The next proposition generalizes this fact to o-LW-operators.

\begin{proposition}\label{generalize to o-LW}
Let an operator $T\in\text{\rm o-LW}(E,F)$ be interval preserving. 
Then the norm closure $\bar{A}$ of the ideal $A$ generated by $T(E)$ 
in $F$ is a Banach lattice with \text{\rm o}-continuous norm. 
\end{proposition}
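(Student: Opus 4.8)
The plan is to reduce the whole statement to the single claim that $T(E)$ is contained in the o-continuous part $F^a$ of $F$; everything else is then a routine manipulation of closed ideals. First I would record what interval preservation gives: $T$ is positive and $T[0,w]=[0,Tw]$ for every $w\in E_+$. Positivity also lets me describe the ideal generated by $T(E)$ concretely as $A=\{y\in F:\ |y|\le Tw\ \text{for some}\ w\in E_+\}$, since any finite sum $|Tx_1|+\dots+|Tx_k|$ is dominated by $T(|x_1|+\dots+|x_k|)$. In particular $A$ is an ideal, so its norm closure $\bar A$ is a closed ideal of $F$ and hence a Banach lattice in its own right.

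The heart of the proof is to show $Tw\in F^a$ for every $w\in E_+$, and this is where both hypotheses are used together. The order interval $[0,w]$ is order bounded, so $T[0,w]$ is an $\text{\rm LW}$-set because $T\in\text{\rm o-LW}(E,F)$; by interval preservation $T[0,w]=[0,Tw]$, so $[0,Tw]$ is an $\text{\rm LW}$-set. Now I would invoke the approximation form of $\text{\rm LW}$-sets in Assertion~\ref{Meyer 3.6.2}(iii): for every $\varepsilon>0$ there is $u_\varepsilon\in(F^a)_+$ with $[0,Tw]\subseteq[-u_\varepsilon,u_\varepsilon]+\varepsilon B_F$. Since $Tw\in[0,Tw]$ and $[-u_\varepsilon,u_\varepsilon]\subseteq F^a$ (because $F^a$ is a solid subspace), this gives $\operatorname{dist}(Tw,F^a)\le\varepsilon$ for all $\varepsilon>0$; as $F^a$ is norm closed, $Tw\in F^a$. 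By linearity $T(E)\subseteq F^a$, and since $F^a$ is an ideal containing $T(E)$ it contains the ideal $A$ generated by $T(E)$, whence $\bar A\subseteq\overline{F^a}=F^a$.

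Finally I would deduce that the norm of $\bar A$ is o-continuous. The direct route is that $\bar A$ is a closed ideal of $F$ sitting inside the o-continuous part $F^a$, and o-continuity of the norm is inherited by closed ideals. If one wishes to stay strictly within the tools already assembled, I would instead check the disjoint-sequence criterion: an order bounded disjoint sequence $(y_n)$ in $\bar A$ satisfies $|y_n|\le y$ for some $y\in\bar A_+\subseteq F^a$, and taking $u_\varepsilon:=y$ in Assertion~\ref{Meyer 3.6.2}(iii) exhibits $[0,y]$ as an $\text{\rm LW}$-set; since $(y_n)\subseteq\text{\rm sol}([0,y])$, it is norm null, which is precisely order continuity of the norm on $\bar A$. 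I expect the genuine obstacle to be the middle step — correctly combining interval preservation with the $\text{\rm o-LW}$ property to realize $[0,Tw]$ as an $\text{\rm LW}$-set and then extracting $Tw\in F^a$ from the approximation characterization rather than from a disjointness argument; the identification of $A$ and the passage to its closure are comparatively mechanical.
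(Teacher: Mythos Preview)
Your proof is correct, but it follows a different line from the paper's.

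The paper works directly with the disjoint-sequence criterion for o-continuity on $A$: given a disjoint sequence $(x_n)$ in some $[0,x]$ with $x\le\sum_{i=1}^k|Ty_i|$, it uses the Riesz decomposition property to split $x_n=\sum_{i=1}^k x_n^i$ with each $(x_n^i)_n$ disjoint inside the interval $T[0,\sum_i|y_i|]=[0,\sum_iT|y_i|]$; since this interval is an $\text{\rm LW}$-set by the o-LW hypothesis plus interval preservation, each $(x_n^i)_n$ is norm null by the very definition of an $\text{\rm LW}$-set, and the result follows.

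You instead use the \emph{approximation} characterization of $\text{\rm LW}$-sets (Assertion~\ref{Meyer 3.6.2}(iii)) to show in one stroke that $Tw\in F^a$ for every $w\in E_+$, hence $\bar A\subseteq F^a$, and then read off o-continuity. This avoids the Riesz decomposition entirely and is arguably more conceptual: it explains \emph{why} the conclusion holds by locating the whole range inside $F^a$, whereas the paper's argument verifies the criterion by hand. The trade-off is that you rely on background facts about $F^a$ (that it is a norm-closed ideal with o-continuous norm), while the paper stays closer to the bare definition of an $\text{\rm LW}$-set. Both arguments use interval preservation at the same pivotal point, to identify $T[0,w]$ with an order interval in $F$.
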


\begin{proof}
It suffices to show that every disjoint order bounded sequence 
in $A$ is norm null. So, let  $(x_n)$ be disjoint in $[0,x]$ for some $x\in A$. 
Pick $y_1,\dots, y_k$ in $E$ with $x\le\sum_{i=1}^k |Ty_i|$.
Using the Riesz decomposition property, 
we can write each $x_n$ as $x_n=\sum_{i=1}^k x_n^i$ with
$
  0\le x_n^i\le|Ty_i|\le T|y_i|\le T\left(\sum_{i=1}^k |y_i|\right)
$
for each $n$ and  $i=1,\dots, k$.
For each $i$, the sequence $(x^i_n)$ is disjoint 
in the \text{\rm LW}-set 
$T\left[0,\sum_{i=1}^k |y_i|\right]=\left[0,\sum_{i=1}^k T|y_i|\right]$,
and hence $\lim_n \|x^i_n\|=0$. Consequently $\lim_n\|x_n\|=0$.
\end{proof}

\begin{definition}\label{T has o-cont norm}
{\em (See \cite[p.328]{AlBu}).
Let $T\in{\cal L}_{ob}(E,F)$ where $F$ is Dedekind complete. 
Then $T$ is said to have {\em \text{\rm o}-continuous norm}, 
whenever $\|T_n\|\downarrow 0$. 
for every sequence $(T_n)$ of operators satisfying  
$|T|\ge T_n\downarrow 0$ in ${\cal L}_{ob}(E,F)$.}
\end{definition}

\begin{proposition}\label{a-LWo has o-cont norm}
Let the norms in $E'$ and $F$ be \text{\rm o}-continuous. 
Then each order bounded operator $T\in\text{\rm a-LW}(E,F)$ 
has \text{\rm o}-continuous norm.
\end{proposition}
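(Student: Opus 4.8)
The plan is to unwind Definition \ref{T has o-cont norm} and reduce the claim to two facts: that the modulus $|T|$ is semi-compact, and that semi-compactness of $|T|$ forces the uniform decay $\|T_n\|\to0$ along any sequence with $|T|\ge T_n\downarrow0$. First note that o-continuity of the norm of $F$ makes $F$ Dedekind complete, so $|T|\in{\cal L}_{ob}(E,F)$ exists and Definition \ref{T has o-cont norm} applies; moreover, since $F^a=F$, Assertion \ref{Meyer 3.6.2} tells us that the \text{\rm LW}-subsets of $F$ are exactly the almost order bounded ones. Fix $(T_n)$ with $|T|\ge T_n\downarrow0$. Because $|T_nx|\le T_n|x|$, one has $\|T_n\|=\sup_{x\in B_E\cap E_+}\|T_nx\|$, and $0\le T_n\le T_m$ for $n\ge m$ shows $\|T_n\|$ is decreasing; also $T_nx\downarrow0$ for each $x\in E_+$, so o-continuity of $F$ yields $\|T_nx\|\to0$ pointwise. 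The whole difficulty is to make this pointwise decay uniform on $B_E$.

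The main step, and the expected obstacle, is to prove that $|T|$ is semi-compact, i.e. that $|T|(B_E)$ is almost order bounded (equivalently, an \text{\rm LW}-set). By Assertion \ref{Meyer 3.6.2} this amounts to $\|\,|T|'\gamma_n\|\to0$ for every disjoint bounded $(\gamma_n)$ in $F'$, and since $|\,|T|'\gamma_n|\le|T|'|\gamma_n|$ we may replace $\gamma_n$ by $|\gamma_n|$ and assume $\gamma_n\in F'_+$. I would establish $\|\,|T|'\gamma_n\|\to0$ through Assertion \ref{Dodds-Fremlin}(ii) applied to the positive sequence $(|T|'\gamma_n)$ in $E'$. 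Its w$^\ast$-nullity is clear: for fixed $x\in E_+$ the order interval $[0,|T|x]$ is an \text{\rm LW}-set (o-continuity of $F$), whence $(|T|'\gamma_n)(x)=\gamma_n(|T|x)\to0$ by Assertion \ref{Meyer 3.6.2}(ii). The disjoint test $(|T|'\gamma_n)(x_n)=\gamma_n(|T|x_n)\to0$ for disjoint bounded $(x_n)$ in $E_+$ is where almost \text{\rm L}-weak compactness and o-continuity of $E'$ enter: using order continuity of $\gamma_n$ (valid as $F$ is o-continuous) together with $|T|x_n=\sup_{|y|\le x_n}Ty$, one gets $\gamma_n(|T|x_n)=\sup_{|y|\le x_n}\gamma_n(Ty)$, so I pick $y_n$ with $|y_n|\le x_n$ and $\gamma_n(Ty_n)\ge\gamma_n(|T|x_n)-2^{-n}$. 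Then $(y_n)$ is disjoint and bounded, hence w-null by Assertion \ref{E' is o-cont}, so $\{y_n\}\cup\{0\}$ is relatively w-compact; as $T$ is an \text{\rm a-LW}-operator, $T(\{y_n\}\cup\{0\})$ is an \text{\rm LW}-set and Assertion \ref{Meyer 3.6.2}(iv) gives $\gamma_n(Ty_n)\to0$, whence $\gamma_n(|T|x_n)\to0$. This finishes the semi-compactness of $|T|$.

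Finally I would upgrade pointwise to uniform decay. Given $\varepsilon>0$, semi-compactness yields $u\in F_+$ with $|T|(B_E\cap E_+)\subseteq[0,u]+\varepsilon B_F$; since $0\le T_nx\le|T|x$, the same $u$ serves every $T_n$, and for $x\in B_E\cap E_+$ the splitting $T_nx=(T_nx)\wedge u+(T_nx-u)^+$ gives $\|T_nx\|\le\|(T_nx)\wedge u\|+\varepsilon$. It remains to show $\sup_{x\in B_E\cap E_+}\|(T_nx)\wedge u\|\to0$. Suppose not; then there are $\delta>0$, indices $n_k$ and $x_k\in B_E\cap E_+$ with $w_k:=(T_{n_k}x_k)\wedge u\in[0,u]$ and $\|w_k\|\ge\delta$. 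For $g\in F'_+$ one has $g(w_k)\le g(T_{n_k}x_k)=(T_{n_k}'g)(x_k)\le\|T_{n_k}'g\|$; since $T_n'g\downarrow0$ in $E'$ (as $g(T_nx)\to0$ for $x\in E_+$) and $E'$ is o-continuous, $\|T_{n_k}'g\|\to0$, so $w_k\rightharpoonup0$. But $0\le w_k\le u$ with $[0,u]$ an \text{\rm LW}-set forces $\|w_k\|\to0$ via Assertion \ref{Dodds-Fremlin}(i): the moduli $|w_k|=w_k$ are w-null, while for disjoint bounded $(\phi_k)$ in $F'_+$ one has $\phi_k(w_k)\le\sup_{z\in[0,u]}\phi_k(z)\to0$ by Assertion \ref{Meyer 3.6.2}(ii). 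This contradicts $\|w_k\|\ge\delta$. Hence $\limsup_n\|T_n\|\le\varepsilon$ for every $\varepsilon>0$, so $\|T_n\|\to0$, and $T$ has o-continuous norm.
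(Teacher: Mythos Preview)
The paper's proof is a three-line chain of citations: by \cite[Thm.4]{EAS} the hypotheses force $T$ to be MW; by \cite[Cor.3.6.14]{Mey} it is then LW; and \cite[Thm.5.68]{AlBu} yields the o-continuous norm. Your route is far more self-contained, and the second half---upgrading the pointwise decay $\|T_nx\|\to0$ to uniform decay on $B_E$ once $|T|$ is known to be semi-compact---is carried out correctly, including the nice use of o-continuity of $E'$ to get $\|T_{n_k}'g\|\to0$ and of Assertion~\ref{Meyer 3.6.2} to force $\|w_k\|\to0$.

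The genuine gap is in your argument that $|T|$ is semi-compact. The identity
\[
\gamma_n(|T|x_n)=\sup_{|y|\le x_n}\gamma_n(Ty)
\]
does \emph{not} follow from order continuity of $\gamma_n$: order continuity lets you commute $\gamma_n$ with suprema of \emph{upward directed} families, but the family $\{Ty:|y|\le x_n\}$ is not directed. A two-dimensional counterexample already breaks the identity: on $E=F=\mathbb{R}^2$ take $T=\bigl(\begin{smallmatrix}1&-1\\-1&1\end{smallmatrix}\bigr)$, $x=(1,1)$, $\gamma=(1,1)$; then $|T|=\bigl(\begin{smallmatrix}1&1\\1&1\end{smallmatrix}\bigr)$, so $\gamma(|T|x)=4$, whereas $\gamma(Ty)=(y_1-y_2)+(-y_1+y_2)=0$ for every $y$, and the right-hand supremum is $0$. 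Consequently your choice of $y_n$ with $\gamma_n(Ty_n)\ge\gamma_n(|T|x_n)-2^{-n}$ is impossible in general, and the a-LW property of $T$ on the w-null disjoint sequence $(y_n)$ gives no control over $\gamma_n(|T|x_n)$. The passage from $T$ to $|T|$ is exactly the nontrivial content that the paper outsources to its citations; to repair your argument you would need an independent reason why $|T|$---not just $T$---carries $B_E$ into an LW-set.
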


\begin{proof}
By \cite[Thm.4]{EAS}, if $E'$ has o-continuous norm, 
then each order bounded a-LW-operator
$T: E\to F$ is MW. Hence, by \cite[Cor.3.6.14]{Mey}, 
$T$ is also LW. Then $T$ has o-continuous norm
by \cite[Thm.5.68]{AlBu}. 
\end{proof}

\subsection{} 
In Section 2, we introduce and investigate enveloping norms
of the regularly ${\cal P}$-operators between Banach lattices.
Section 3 is devoted to various modifications of limited operators,
which have being appeared in the last decade.
Section 4 is devoted to the almost Grothendieck operators.
introduced recently in \cite{GM}.
In Section 5, the almost Dunford--Pettis operators are studied.
For further unexplained terminology and notations, we refer to 
\cite{AlBu,AAT,AEG1,AEG2,AEW,BLM1,Diestel,DU,EAS,Emel,
BLM0,BLM2,LM1,Mey,Wnuk1,Wnuk3,Za}.

%%%%%%%%%%%%%%%%%%%%
\section{Enveloping norms on spaces of regularly ${\cal P}$-operators} 
%%%%%%%%%%%%%%%%%%%%

In this section, we continue the investigation of regularly ${\cal P}$-operators 
initiated in \cite{Emel,AEG2}. We introduce enveloping norms on spaces 
of regularly ${\cal P}$-operators.

\subsection{Regularly ${\cal P}$-operators.}
An operator $T:E\to F$ is called {\em regular} 
if $T=T_1-T_2$ for some $T_1,T_2\in{\cal L}_+(E,F)$. We denote by ${\cal L}_r(E,F)$ 
(resp. ${\cal L}_{ob}(E,F)$, ${\cal L}_{oc}(E,F)$) the ordered space of all regular 
(resp. order bounded, \text{\rm o}-continuous) operators 
in ${\cal L}(E,F)$.  The space ${\cal L}_r(E,F)$ need not to be a vector lattice,
but it is a Banach space under the {\em regular norm}
\begin{equation}\label{def of regular norm}
   \|T\|_r:=\inf\{\|S\|:\pm T\le S\in\text{\rm L}(E,F)\}
\end{equation}
by \cite[Prop.1.3.6]{Mey}. Furthermore, for every $T\in{\cal L}_r(E,F)$,
\begin{equation}\label{regular norm 1}
   \|T\|_r=\inf\{\|S\|: S\in\text{\rm L}(E,F), |Tx|\le S|x|\ \forall x\in E\}\ge\|T\|.
\end{equation} 
If $F$ is Dedekind complete, then $({\cal L}_r(E,F),\|\cdot\|_r)$ is a Banach lattice
such that $\|T\|_r=\|~|T|~\|$ for every $T\in{\cal L}_r(E,F)$ \cite[Prop.1.3.6]{Mey}.
The following definition was introduced in \cite[Def.2]{Emel} 
(cf. also \cite[Def.1.5.1]{AEG3}). 

\begin{definition}\label{rP-operators}{\em
Let ${\cal P}\subseteq\text{\rm L}(E,F)$. An operator $T:E\to F$ is called 
a {\it regularly} ${\cal P}$-{\it operator}
(shortly, an r-${\cal P}$-{\it operator}), if $T=T_1-T_2$ 
with $T_1,T_2\in{\cal P}\cap\text{\rm L}_+(E,F)$.}
\end{definition}
\noindent
Given ${\cal P}\subseteq\text{\rm L}(E,F)$; we denote by: 
\begin{enumerate}[]
\item 
${\cal P}(E,F):= {\cal P}$ the set of all ${\cal P}$-{\em operators} 
in $\text{\rm L}(E,F)$;  
\item 
${\cal P}_r(E,F)$ the set of all regular operators in ${\cal P}(E,F)$;  
\item 
$\text{\rm r-}{\cal P}(E,F)$ the set of all regularly 
${\cal P}$-operators in $\text{\rm L}(E,F)$.
\end{enumerate}

\begin{assertion}\label{prop elem}
{\rm (\cite[Prop.1.5.2]{AEG3})}
Let ${\cal P}\subseteq\text{\rm L}(E,F)$, ${\cal P}\pm{\cal P}\subseteq{\cal P}\ne\emptyset$, 
and $T\in\text{\rm L}(E,F)$. Then the following holds.
\begin{enumerate}[\em (i)]
\item 
$T$ is an {\em r-}${\cal P}$-operator iff $T$ is a ${\cal P}$-dominated ${\cal P}$-operator.  
\item 
Suppose ${\cal P}$-operators satisfy the domination property and the modulus $|T|$ exists 
in $\text{\rm L}(E,F)$. Then $T$ is an \text{\rm r}-${\cal P}$-operator iff  $|T|\in{\cal P}$.
\end{enumerate}
\end{assertion}

The following fact was established in \cite[Prop.1.5.3]{AEG3}.

\begin{assertion}\label{vect lat}
Let $F$ be Dedekind complete, and let ${\cal P}$ be a subspace in $\text{\rm L}(E,F)$,
satisfiying the domination property. Then $\text{\rm r-}{\cal P}(E,F)$
is a Dedekind complete vector lattice.
\end{assertion}

\subsection{Enveloping norms.}
It is natural to replace $\text{\rm L}(E,F)$ in the definition (\ref{def of regular norm}) 
of the regular norm by an arbitrary nonempty ${\cal P}\subseteq\text{\rm L}(E,F)$ as follows:
\begin{equation}\label{enveloping norm}
   \|T\|_{\text{\rm r-}{\cal P}}:=\inf\{\|S\|:\pm T\le S\in{\cal P}\} \ \ 
   \ \ (T\in\text{\rm r-}{\cal P}(E,F)).
\end{equation}

\begin{lemma}\label{Enveloping P-norm}
For a vector subspace ${\cal P}$ of $\text{\rm L}(E,F)$, the formula 
$(\ref{enveloping norm})$ defines a norm on $\text{\rm r-}{\cal P}(E,F)$,
called the {\rm enveloping norm}. Moreover,
\begin{equation}\label{Enveloping P-norm 2}
   \|T\|_{\text{\rm r-}{\cal P}}=
   \inf\{\|S\|: S\in{\cal P}\ \&\ (\forall x\in E)\ |Tx|\le S|x|\}
   \ \ \ (T\in\text{\rm r-}{\cal P}(E,F)).
\end{equation}
If ${\cal P}_1$ is a vector subspace of ${\cal P}$ then
\begin{equation}\label{Enveloping P-norm 1}
   \|T\|_{\text{\rm r-}{\cal P}_1}\ge\|T\|_{\text{\rm r-}{\cal P}}\ge\|T\|_r\ge\|T\| 
   \ \ \ \ \ (\forall\ T\in\text{\rm r-}{\cal P}_1(E,F)).
\end{equation}
\end{lemma}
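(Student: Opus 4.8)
The plan is to verify the norm axioms straight from the defining infimum (\ref{enveloping norm}), then to identify the two admissible sets occurring in (\ref{enveloping norm}) and (\ref{Enveloping P-norm 2}), and finally to read off the chain (\ref{Enveloping P-norm 1}) by monotonicity of the infimum. First I would record that $\text{\rm r-}{\cal P}(E,F)$ is a vector space (sums and scalar multiples of $\text{\rm r-}{\cal P}$-operators stay such, since ${\cal P}$ is a subspace), so that the axioms make sense, and that the infimum in (\ref{enveloping norm}) is over a nonempty set: if $T=T_1-T_2$ with $T_1,T_2\in{\cal P}\cap\text{\rm L}_+(E,F)$, then $S:=T_1+T_2\in{\cal P}$ is positive and satisfies $S-T=2T_2\ge 0$ and $S+T=2T_1\ge 0$, i.e. $\pm T\le S$. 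I would also note that the inequality $\pm T\le S$ by itself forces $S\ge 0$, since adding $T\le S$ and $-T\le S$ gives $2S\ge 0$.

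The conceptual core is the equivalence, valid for any $S\in{\cal P}$: the operator inequality $\pm T\le S$ holds iff $|Tx|\le S|x|$ for every $x\in E$. Indeed, for $x\in E_+$ one has $|x|=x$, and $\pm T\le S$ yields $-Sx\le Tx\le Sx$, i.e. $|Tx|\le Sx=S|x|$; for arbitrary $x$ I would write $x=x^+-x^-$ and estimate $|Tx|\le|Tx^+|+|Tx^-|\le Sx^++Sx^-=S|x|$, using linearity and positivity of $S$ together with the triangle inequality in the Banach lattice $F$. The converse is immediate upon restricting to $x\in E_+$. This equivalence shows at once that the two admissible sets in (\ref{enveloping norm}) and (\ref{Enveloping P-norm 2}) coincide, hence their infima agree and (\ref{Enveloping P-norm 2}) follows. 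It also gives $\|Tx\|=\||Tx|\|\le\|S|x|\|\le\|S\|\,\|x\|$, so $\|T\|\le\|S\|$ for every admissible $S$; taking the infimum yields $\|T\|_{\text{\rm r-}{\cal P}}\ge\|T\|$, which in particular secures definiteness, since $\|T\|_{\text{\rm r-}{\cal P}}=0$ then forces $\|T\|=0$ and $T=0$.

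Absolute homogeneity and subadditivity are then routine. For $\lambda\ne 0$ the constraint $\pm(\lambda T)\le S$ is equivalent to $\pm T\le|\lambda|^{-1}S$ (the sets $\{\lambda T,-\lambda T\}$ and $\{|\lambda|T,-|\lambda|T\}$ coincide), so rescaling $S$ gives $\|\lambda T\|_{\text{\rm r-}{\cal P}}=|\lambda|\,\|T\|_{\text{\rm r-}{\cal P}}$, while $\lambda=0$ is clear from $0\in{\cal P}$. For the triangle inequality, given $\varepsilon>0$ I would choose $S_T,S_U\in{\cal P}$ with $\pm T\le S_T$, $\pm U\le S_U$ and $\|S_T\|\le\|T\|_{\text{\rm r-}{\cal P}}+\varepsilon$, $\|S_U\|\le\|U\|_{\text{\rm r-}{\cal P}}+\varepsilon$; then $S_T+S_U\in{\cal P}$ dominates $\pm(T+U)$ and $\|S_T+S_U\|\le\|S_T\|+\|S_U\|$, so letting $\varepsilon\to 0$ finishes the estimate.

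Finally, the chain (\ref{Enveloping P-norm 1}) is pure monotonicity of the infimum. For $T\in\text{\rm r-}{\cal P}_1(E,F)$ the inclusions ${\cal P}_1\subseteq{\cal P}\subseteq\text{\rm L}(E,F)$ make the successive admissible sets $\{S\in{\cal P}_1:\pm T\le S\}\subseteq\{S\in{\cal P}:\pm T\le S\}\subseteq\{S\in\text{\rm L}(E,F):\pm T\le S\}$ larger, so the infima decrease, giving $\|T\|_{\text{\rm r-}{\cal P}_1}\ge\|T\|_{\text{\rm r-}{\cal P}}\ge\|T\|_r$ (the last quantity being the special case ${\cal P}=\text{\rm L}(E,F)$ of (\ref{def of regular norm})), and the remaining bound $\|T\|_r\ge\|T\|$ is recorded in (\ref{regular norm 1}). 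The only step here that is not mere bookkeeping is the equivalence of the operator inequality $\pm T\le S$ — a condition testing only the positive cone — with the full modulus domination $|Tx|\le S|x|$ on all of $E$; I would emphasize that this argument needs neither Dedekind completeness of $F$ nor the existence of the operator modulus $|T|$, but only the modulus of the individual vectors $Tx$ in the Banach lattice $F$.
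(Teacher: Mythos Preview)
Your proof is correct and follows essentially the same route as the paper's. The paper explicitly treats only the triangle inequality and the equivalence behind (\ref{Enveloping P-norm 2}) --- splitting $x=x_+-x_-$ exactly as you do --- while you additionally spell out nonemptiness of the admissible set, definiteness via $\|T\|\le\|T\|_{\text{\rm r-}{\cal P}}$, homogeneity, and the monotonicity argument for (\ref{Enveloping P-norm 1}); none of this deviates from the paper's method, it merely fills in the details the paper leaves implicit.
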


\begin{proof}
Only the triangle inequality for $\|\cdot\|_{\text{\rm r-}{\cal P}}$ and the 
formula (\ref{Enveloping P-norm 2}) require some explanations.

(A) Let $T_1,T_2\in\text{\rm r-}{\cal P}(E,F)$ and $\varepsilon>0$. Pick
$S_1,S_2\in{\cal P}$ with $\pm T_1\le S_1$, $\pm T_2\le S_2$,
$\|S_1\|\le\|T_1\|_{\text{\rm r-}{\cal P}}+\varepsilon$, and 
$\|S_2\|\le\|T_2\|_{\text{\rm r-}{\cal P}}+\varepsilon$.
Then $\pm(T_1+T_2)\le S_1+S_2\in{\cal P}$, and
$
    \|T_1+T_2\|_{\text{\rm r-}{\cal P}}\le\|S_1+S_2\|\le\|S_1\|+\|S_2\|\le
    \|T_1\|_{\text{\rm r-}{\cal P}}+\|T_2\|_{\text{\rm r-}{\cal P}}+2\varepsilon.
$
Since $\varepsilon>0$ is arbitrary, 
$\|T_1+T_2\|_{\text{\rm r-}{\cal P}}\le
\|T_1\|_{\text{\rm r-}{\cal P}}+\|T_2\|_{\text{\rm r-}{\cal P}}$.

(B) Denote the right side of (\ref{Enveloping P-norm 2}) by $R(T)$. 
If $\pm T\le S\in{\cal P}$ then 
$$
   \pm Tx=\pm(T(x_+)-T(x_-))=\pm T(x_+)\mp T(x_-)\le S(x_+)+S(x_-)=S|x|
$$
for all $x\in E$. Then $|Tx|\le S|x|$ for all $x\in E$, and hence 
$\|T\|_{\text{\rm r-}{\cal P}}\ge R(T)$.

If $S\in{\cal P}$ satisfies $|Tx|\le S|x|$ for all $x\in E$ then,
for all $y\in E_+$, $|Ty|\le Sy$ and consequently $\pm Ty\le Sy$.
Therefore $\pm T\le S$, and hence $R(T)\ge\|T\|_{\text{\rm r-}{\cal P}}$.
\end{proof}

\begin{corollary}\label{reg op banach lattice}
Let $F$ be Dedekind complete and let ${\cal P}$ 
be a closed in the operator norm subspace of $\text{\rm L}(E,F)$
satisfying the domination property. Then $\text{\rm r-}{\cal P}(E,F)$
is a Dedekind complete Banach lattice under the enveloping norm.
\end{corollary}

\begin{proof}
By Assertion~\ref{vect lat}, $\text{\rm r-}{\cal P}(E,F)$ is a Dedekind 
complete vector lattice. Theorem~\ref{P-norm} implies that 
$\text{\rm r-}{\cal P}(E,F)$ is a Banach space
under the enveloping norm.

Assertion~\ref{vect lat} implies  
$\|T\|_{\text{\rm r-}{\cal P}}=\|~|T|~\|=\|~|T|~\|_{\text{\rm r-}{\cal P}}$
for all $T\in\text{\rm r-}{\cal P}(E,F)$ and hence   
$(\text{\rm r-}{\cal P}(E,F),\|\cdot\|_{\text{\rm r-}{\cal P}})$ 
is a Banach lattice.
\end{proof}

\subsection{Completeness of the enveloping norms.}
The following theorem can be considered as an extension of \cite[Prop.1.3.6]{Mey}, 
\cite[Lm.1]{Emel} (see also \cite[Prop.2.2]{CW97} and 
\cite[Thm.2.3]{Cheng} for particular cases). Its proof is a rather 
straightforward modification of the proof of \cite[Prop.1.3.6]{Mey}.

\begin{theorem}\label{P-norm}
Let ${\cal P}$ be a subspace of $\text{\rm L}(E,F)$
closed in the operator norm.
Then $\text{\rm r-}{\cal P}(E,F)$ is a Banach space under the
enveloping norm.
\end{theorem}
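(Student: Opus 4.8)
The plan is to establish completeness through the classical criterion that a normed space is Banach as soon as every absolutely convergent series converges. Accordingly, I would begin with a sequence $(T_n)$ in $\text{\rm r-}{\cal P}(E,F)$ satisfying $\sum_{n=1}^\infty\|T_n\|_{\text{\rm r-}{\cal P}}<\infty$ and show that its partial sums converge in $\|\cdot\|_{\text{\rm r-}{\cal P}}$ to an element of $\text{\rm r-}{\cal P}(E,F)$. Invoking the definition $(\ref{enveloping norm})$, for each $n$ I would select a dominating $S_n\in{\cal P}$ with $\pm T_n\le S_n$ and $\|S_n\|\le\|T_n\|_{\text{\rm r-}{\cal P}}+2^{-n}$; then $\sum_{n=1}^\infty\|S_n\|<\infty$ as well.

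The argument rests on two closedness facts: ${\cal P}$ is norm-closed in the Banach space $\text{\rm L}(E,F)$, and the cone $\text{\rm L}_+(E,F)$ is norm-closed. Since $\sum_n\|S_n\|<\infty$, the series $\sum_n S_n$ converges in operator norm to some $S\in{\cal P}$; and since $\|T_n\|\le\|T_n\|_{\text{\rm r-}{\cal P}}$ by Lemma \ref{Enveloping P-norm} while each $T_n\in\text{\rm r-}{\cal P}(E,F)\subseteq{\cal P}$ (the inclusion holding because ${\cal P}$ is a subspace), the series $\sum_n T_n$ converges in operator norm to some $T\in{\cal P}$. Letting $N\to\infty$ in the partial-sum inequalities $\pm\sum_{n=1}^N T_n\le\sum_{n=1}^N S_n$ and using closedness of $\text{\rm L}_+(E,F)$ yields $\pm T\le S$. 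Then
\begin{equation*}
   T=\frac12(S+T)-\frac12(S-T)
\end{equation*}
displays $T$ as a difference of two members of ${\cal P}\cap\text{\rm L}_+(E,F)$, so $T\in\text{\rm r-}{\cal P}(E,F)$; alternatively, one may cite Assertion \ref{prop elem}(i), $T$ being a ${\cal P}$-dominated ${\cal P}$-operator.

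To promote operator-norm convergence to enveloping-norm convergence, I would pass to tails. Put $\tilde S_N:=S-\sum_{n=1}^N S_n=\sum_{n=N+1}^\infty S_n\in{\cal P}$. Exactly as before, letting $M\to\infty$ in $\pm\sum_{n=N+1}^M T_n\le\sum_{n=N+1}^M S_n$ gives $\pm\big(T-\sum_{n=1}^N T_n\big)\le\tilde S_N$, whence by $(\ref{enveloping norm})$
\begin{equation*}
   \Big\|T-\sum_{n=1}^N T_n\Big\|_{\text{\rm r-}{\cal P}}\le\|\tilde S_N\|\le\sum_{n=N+1}^\infty\|S_n\|,
\end{equation*}
which tends to $0$ as $N\to\infty$. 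Hence the partial sums converge to $T$ in the enveloping norm, every absolutely convergent series converges, and $\text{\rm r-}{\cal P}(E,F)$ is complete.

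The only point requiring genuine care is the transition in the last paragraph: operator-norm completeness of ${\cal P}$ by itself yields a limit $T$ and its membership in $\text{\rm r-}{\cal P}(E,F)$, but convergence in the strictly larger enveloping norm demands that the single family $(S_n)$ dominate all remainders $T-\sum_{n\le N}T_n$ in a summable fashion. This is exactly what $\sum_n\|S_n\|<\infty$ together with the closedness of $\text{\rm L}_+(E,F)$ provides. The remainder of the proof is a direct transcription of Meyer--Nieberg's argument for $\|\cdot\|_r$, with $\text{\rm L}(E,F)$ replaced by the closed subspace ${\cal P}$.
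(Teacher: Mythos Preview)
Your proof is correct and follows essentially the same route as the paper's: both reduce to Meyer--Nieberg's argument for the regular norm, choosing summable dominants in ${\cal P}$ and using closedness of ${\cal P}$ and of $\text{\rm L}_+(E,F)$ to pass inequalities to the limit. The only cosmetic difference is that the paper works with a $\|\cdot\|_{\text{\rm r-}{\cal P}}$-Cauchy sequence, thins it to a fast Cauchy subsequence, and dominates the telescoping differences $T_{n+1}-T_n$, whereas you invoke the absolutely-convergent-series criterion and dominate the terms $T_n$ directly; the two devices are interchangeable here and lead to the same tail estimate.
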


\begin{proof}
Let $(T_n)$ be 
$\|\cdot\|_{\text{\rm r-}{\cal P}}$-Cauchy in $\text{\rm r-}{\cal P}(E,F)$,
say $T_n=P_n-R_n$ for $P_n,R_n\in{\cal P}\cap{\cal L}_+(E,F)$.
WLOG, we can assume that $\|T_{n+1}-T_n\|_{\text{\rm r-}{\cal P}}<2^{-n}$ for all
$n\in\mathbb{N}$. Since $\|\cdot\|_{\text{\rm r-}{\cal P}}\ge\|\cdot\|$, there exists some
$T\in\text{\rm L}(E,F)$ with $\|T-T_n\|\to 0$.
We obtain $T\in{\cal P}$ because $T_n\in{\cal P}$ and ${\cal P}$ is closed 
in the operator norm. Pick $S_n\in{\cal P}$ with $\|S_n\|<2^{-n}$ and
$\pm(T_{n+1}-T_n)\le S_n$. By (\ref{Enveloping P-norm 2}), $|(T_{n+1}-T_n)x|\le S_n|x|$ for 
all $x\in E$ and $n\in\mathbb{N}$. Since ${\cal P}$ is closed in the operator norm,
$Q_n:=\|\cdot\|\text{\rm -}\sum\limits_{k=n}^\infty S_k\in{\cal P}$ for each $n\in\mathbb{N}$.
Since
$$
   |(T-T_n)x|=\lim\limits_{k\to\infty}|(T_k-T_n)x|\le
   \sum\limits_{k=n}^\infty|(T_{k+1}-T_n)x|\le Q_n|x| \ \ \ \ \ (\forall x\in E), 
$$
then $\pm(T-T_n)\le Q_n$ for all $n\in\mathbb{N}$. 
Thus $-Q_n\le(T-T_n)\le Q_n$ and hence $0\le(T-T_n)+Q_n$
for all $n\in\mathbb{N}$. In particular, 
$$
   T=[(T-T_n)+Q_n]+[T_n-Q_n]=
   [(T-T_n)+Q_n+P_n]-[R_n+Q_n]\in\text{\rm r-}{\cal P}(E,F),
$$
and hence $(T-T_n)\in\text{\rm r-}{\cal P}(E,F)$ for all $n\in\mathbb{N}$.
Now, $\|T-T_n\|_{\text{\rm r-}{\cal P}}\le\|Q_n\|<2^{1-n}$ implies
$(T_n)\stackrel{\|\cdot\|_{\text{\rm r-}{\cal P}}}{\to}T$.
\end{proof}
\noindent
In general, $\text{\rm r-}{\cal P}(E,F)\subsetneqq\text{\rm r-}{\overline{\cal P}}(E,F)$,
where ${\overline{\cal P}}$ is the norm-closure of ${\cal P}$ in $\text{\rm L}(E,F)$.
From the other hand, for ${\cal P}:={\cal L}_{ob}(E,F)$ (which  
is almost never closed in $\text{\rm L}(E,F)$ in the operator norm),
$\text{\rm r-}{\cal P}(E,F)=\text{\rm r-}{\overline{\cal P}}(E,F)={\cal L}_r(E,F)$.
The enveloping norms on these three spaces agree with the regular norm,
which makes $\text{\rm r-}{\cal L}_{ob}(E,F)$ a Banach space 
by \cite[Prop.1.3.6]{Mey}.
The following proposition coupled with Example~\ref{P=o-cont}
shows that the enveloping norm on $\text{\rm r-}{\cal P}(E,F)$ can be 
complete even if 
$\text{\rm r-}{\cal P}(E,F)\ne\text{\rm r-}{\overline{\cal P}}(E,F)$.

\begin{proposition}\label{o-cont not so bad}
Let the norm in $F$ be \text{\rm o}-continuous. Then
$\text{\rm r-}{\cal L}_{oc}(E,F)$ is a Banach space 
under the enveloping norm.
\end{proposition}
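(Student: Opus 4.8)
The plan is to re-run the proof of Theorem~\ref{P-norm} with ${\cal P}={\cal L}_{oc}(E,F)$. The subtlety is that ${\cal L}_{oc}(E,F)$ is in general \emph{not} closed in the operator norm (this is exactly the content of Example~\ref{P=o-cont}), so Theorem~\ref{P-norm} cannot be invoked verbatim. Inspecting that proof, operator-norm closedness of ${\cal P}$ is used in precisely two places: to place the operator-norm limit $T$ of the Cauchy sequence in ${\cal P}$, and to place each operator-norm sum $Q_n=\sum_{k\ge n}S_k$ in ${\cal P}$. I would recover both from the order continuity of the norm of $F$.

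The crux is a summation lemma: if $(S_k)\subseteq{\cal L}_{oc}(E,F)\cap{\cal L}_+(E,F)$ with $\sum_k\|S_k\|<\infty$, then $S:=\sum_k S_k$ (operator-norm limit) is again order continuous. Recall that, since the norm of $F$ is o-continuous (whence $F$ is Dedekind complete), a positive operator is order continuous exactly when $\|S x_\alpha\|\to 0$ for every net $x_\alpha\downarrow 0$. So I fix such a net and an index $\alpha_1$, and for $\alpha\ge\alpha_1$ and each $N\in\mathbb N$ split $\|S x_\alpha\|\le\big\|\sum_{k=1}^N S_k x_\alpha\big\|+\big(\sum_{k>N}\|S_k\|\big)\|x_{\alpha_1}\|$, where the tail was estimated using $0\le x_\alpha\le x_{\alpha_1}$ and $S_k\ge 0$. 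The finite sum $\sum_{k=1}^N S_k$ is order continuous, so the first summand tends to $0$ along $\alpha$; hence $\limsup_\alpha\|S x_\alpha\|\le\|x_{\alpha_1}\|\sum_{k>N}\|S_k\|$ for every $N$, and letting $N\to\infty$ yields $\|S x_\alpha\|\to 0$. This step is where o-continuity of the norm of $F$ is indispensable.

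The second, elementary ingredient is a domination observation requiring no hypothesis on $F$: if $U\in{\cal L}(E,F)$ is order bounded with $\pm U\le Q$ for some $Q\in{\cal L}_{oc}(E,F)\cap{\cal L}_+(E,F)$, then $U\in{\cal L}_{oc}(E,F)$. Indeed, $\pm U\le Q$ gives $|Uy|\le Q|y|$ for all $y\in E$ (as in part (B) of the proof of Lemma~\ref{Enveloping P-norm}), so whenever $x_\alpha$ order-converges to $0$, dominated by $w_\alpha\downarrow 0$, we get $|U x_\alpha|\le Q w_\alpha\downarrow 0$, i.e. $U x_\alpha$ order-converges to $0$. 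With these two facts I would copy the proof of Theorem~\ref{P-norm} line for line: take a $\|\cdot\|_{\text{\rm r-}{\cal L}_{oc}}$-Cauchy sequence $(T_n)$, $T_n=P_n-R_n$ with $P_n,R_n\in{\cal L}_{oc}(E,F)\cap{\cal L}_+(E,F)$, arranged so that $\|T_{n+1}-T_n\|_{\text{\rm r-}{\cal L}_{oc}}<2^{-n}$; by (\ref{Enveloping P-norm 1}) its operator-norm limit $T\in\text{\rm L}(E,F)$ exists; choose $S_n\in{\cal L}_{oc}(E,F)\cap{\cal L}_+(E,F)$ with $\|S_n\|<2^{-n}$ and $\pm(T_{n+1}-T_n)\le S_n$; put $Q_n:=\sum_{k\ge n}S_k$, which lies in ${\cal L}_{oc}(E,F)$ by the summation lemma and satisfies $\pm(T-T_n)\le Q_n$ just as before. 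The domination observation then yields $T-T_n\in{\cal L}_{oc}(E,F)$, and the decomposition $T=[(T-T_n)+Q_n+P_n]-[R_n+Q_n]$ exhibits $T$ as a difference of two positive order continuous operators, so $T\in\text{\rm r-}{\cal L}_{oc}(E,F)$; finally $\|T-T_n\|_{\text{\rm r-}{\cal L}_{oc}}\le\|Q_n\|<2^{1-n}\to 0$.

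The main obstacle is the summation lemma. Preservation of order continuity under operator-norm-convergent series of positive operators is precisely the property that fails for general $F$ --- equivalently, the non-closedness of ${\cal L}_{oc}(E,F)$ in the operator norm witnessed by Example~\ref{P=o-cont} --- and securing it is exactly what o-continuity of the norm of $F$ provides; everything else is bookkeeping carried over unchanged from Theorem~\ref{P-norm}.
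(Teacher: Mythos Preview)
Your proof is correct and follows essentially the same route as the paper: both re-run the proof of Theorem~\ref{P-norm}, use the o-continuity of the norm of $F$ to prove the ``summation lemma'' that $Q_n=\sum_{k\ge n}S_k\in{\cal L}_{oc}(E,F)$ via the same finite-plus-tail splitting, and then conclude via domination by $Q_n$. The only cosmetic difference is that the paper invokes the fact that ${\cal L}_{oc}(E,F)$ is an order ideal in ${\cal L}_r(E,F)$ (available since $F$, having o-continuous norm, is Dedekind complete) to get $T-T_1\in{\cal L}_{oc}(E,F)$ and hence $T\in{\cal L}_{oc}(E,F)$, whereas you give the domination argument by hand and use the explicit decomposition $T=[(T-T_n)+Q_n+P_n]-[R_n+Q_n]$ from Theorem~\ref{P-norm}.
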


\begin{proof}
Let $(T_n)$ be a Cauchy sequence in $\text{\rm r-}{\cal L}_{oc}(E,F)$ in 
the enveloping norm. WLOG, we can assume that 
$\|T_{n+1}-T_n\|_{\text{\rm r-}{\cal L}_{oc}(E,F)}<2^{-n}$ for all
$n\in\mathbb{N}$. Let $T\in\text{\rm L}(E,F)$ satisfy
$\|T-T_n\|\to 0$. Pick $S_n\in{\cal L}_{oc}(E,F)$ with $\|S_n\|<2^{-n}$ and
$\pm(T_{n+1}-T_n)\le S_n$. First, we claim
$Q_n:=\|\cdot\|\text{\rm -}\sum\limits_{k=n}^\infty S_k\in{\cal L}_{oc}(E,F)$ 
for all $n\in\mathbb{N}$. To prove the claim, it sufficies to show that
$Q_1\in{\cal L}_{oc}(E,F)$. So, let $x_\alpha\downarrow 0$ in $E$.
Passing to a tail we can assume that $\|x_\alpha\|\le M\in\mathbb{R}$
for all $\alpha$. Since $Q_1\ge 0$ then $Q_1x_\alpha\downarrow\ge 0$ and hence
in order to show that $Q_1x_\alpha\downarrow 0$ it is enough to
prove that $\|Q_1x_\alpha\|\to 0$.
Let $\varepsilon>0$. Fix an $m\in\mathbb{N}$ with
$\|Q_{m+1}\|\le\varepsilon$. Since the positive 
operators $S_1,...,S_m$ are all \text{\rm o}-continuous, 
and since the norm in $F$ is \text{\rm o}-continuous,
there exists an $\alpha_1$ such that 
$\sum\limits_{k=1}^m\|S_kx_\alpha\|\le\varepsilon$
for all $\alpha\ge\alpha_1$.  Since $\varepsilon>0$ is arbitrary,
it follows from 
$$
   \|Q_1x_\alpha\|\le\|\sum\limits_{k=1}^mS_kx_\alpha\|+
   \|Q_{m+1}x_\alpha\|\le\varepsilon+M\|Q_{m+1}\|\le 
   2\varepsilon \ \ \ \ (\forall\alpha\ge\alpha_1)
$$
that $\|Q_1x_\alpha\|\to 0$, which proves our claim that 
$Q_n\in{\cal L}_{oc}(E,F)$
for all $n\in\mathbb{N}$.

Since $\pm(T_{n+1}-T_n)\le S_n$ then
by formula (\ref{Enveloping P-norm 2}),
$$
   |(T-T_n)x|=\lim\limits_{k\to\infty}|(T_k-T_n)x|\le
   \sum\limits_{k=n}^\infty |(T_{k+1}-T_n)x|\le
   \sum\limits_{k=n}^\infty S_n|x|=Q_n|x|  
$$
for all $x\in E$. In particular, $|T-T_1|\le Q_1\in{\cal L}_{oc}(E,F)$ and,
since ${\cal L}_{oc}(E,F)$ is an order ideal in ${\cal L}_{r}(E,F)$, then
$(T-T_1)\in{\cal L}_{oc}(E,F)$. 
Since $T_1\in{\cal L}_{oc}(E,F)$, it follows
$T\in{\cal L}_{oc}(E,F)$.
Now, $\|T-T_n\|_{\text{\rm r-}{\cal L}_{oc}(E,F)}\le\|Q_n\|<2^{1-n}$ implies
$(T_n)\stackrel{\|\cdot\|_{\text{\rm r-}{\cal L}_{oc}(E,F)}}{\longrightarrow}T$.
\end{proof}

\begin{example}\label{P=o-cont}
Consider the modification of Krengel's example {\rm \cite[Ex.5.6]{AlBu}}
with $\alpha_n=2^{-\frac{n}{3}}$. The sequence $(K_n)$ in 
${\cal L}_{oc}((\oplus_{n=1}^{\infty}\ell^2_{2^n})_0)$, defined by
$$
   K_nx:=(\alpha_1T_1x_1,\alpha_2T_2x_2,\dots\alpha_nT_nx_n,0,0,\dots),
$$
converges in the operator norm to 
the operator $K\in\text{\rm L}(E,F)$, defined by
$$
   Kx:=(\alpha_1T_1x_1,\alpha_2T_2x_2,\dots\alpha_nT_nx_n,\dots).
$$
Notice that $(\oplus_{n=1}^{\infty}\ell^2_{2^n})_0$ is a Banach lattice
with \text{\rm o}-continuous norm.
Since $|K|$ does not exist, $K$ is not order bounded and hence
$K\not\in{\cal L}_{oc}((\oplus_{n=1}^{\infty}\ell^2_{2^n})_0)$.
\end{example}
\noindent
In the table below, we list several already known results on enveloping norms.
Other applications of Theorem \ref{P-norm} and Assertion \ref{prop elem}
are included in the following sections.

\vspace{3mm}
\noindent
{\tiny
\begin{tabular}{|p{25mm}|p{35mm}|p{3cm}|p{3cm}|}
\hline
$\text{\rm r-}{\cal P}(E,F)$ & 
is complete under the enveloping norm & 
consists of dominated operators & 
is a Banach lattice algebra when $E$ is Dedekind complete and $F=E$ \\
\hline
\text{\rm r-}compact & 
by \cite[Prop.2.2]{CW97} & 
+/- & 
+/- \\
\hline
\text{\rm r-b-AM-}compact & 
by \cite[Thm.2.3]{Cheng} & 
+/- & 
+/- \\
\hline
\text{\rm r-} ($\text{\rm r-}\sigma$) Lebesgue & 
cf. \cite[Thm.1]{Emel} & 
an easy exercise & 
\cite[Thm.1]{Emel}  \\
\hline
\text{\rm r-quasi-KB} & 
cf. \cite[Thm.2]{Emel} & 
by \cite[Thm.2.6]{AEG2} & 
by \cite[Thm.2]{Emel}  \\
\hline
\text{\rm r-a-LW} and \text{\rm r-a-MW} & 
\cite[Prop.2.1]{BLM1}, \cite[Thm.3.2.1]{AEG3} & 
by \cite[Thms.1, 2]{AkGo} & 
\cite[Thms.3.2.7, 3.3.9]{AEG3} \\
\hline
\text{\rm r-o-LW} and \text{\rm r-o-MW} & 
follows from Theorem \ref{P-norm} and \cite[Cor.2.3]{BLM2} & 
\cite[Cor.2.3]{BLM2} & 
by Theorem \ref{P-norm} and Assertion \ref{prop elem}\\
\hline
\end{tabular}
}

%%%%%%%%%%%%%%%%%%%%
\section{Almost limited operators.} 
%%%%%%%%%%%%%%%%%%%%

In this section, we investigate several modifications of limited operators,
introduced recently in \cite{CCJ,El,EMM,FKM,Ghenciu,MF,KFMM,MFMA}.

\subsection{Almost limited, almost Dunford--Pettis, and almost L-sets.}
Standard application of the Banach lattice structure to the definitions of limited,
Dunford--Pettis, and \text{\rm L}-sets gives the following.  

\begin{definition}\label{alim and a-DP}
{\em  A bounded subset $A\subseteq E$ is called:
\begin{enumerate}[a)]
\item 
{\em almost limited} (an \text{\rm a}-{\em limited set}) if every disjoint 
\text{\rm w}$^\ast$-null sequence in $E'$ is uniformly null 
on $A$ \cite[Def.2.3]{CCJ};
\item 
{\em almost Dunford--Pettis} (an \text{\rm a-DP}-{\em set}) 
if every disjoint \text{\rm w}-null sequence in $E'$ 
is uniformly null on $A$ \cite{Bou}.
\end{enumerate}}
{\em  A bounded subset $B\subseteq E'$ is called:
\begin{enumerate}[c)]
\item 
an {\em almost} \text{\rm L}-{\em set} (shortly, an \text{\rm a-L}-{\em set})
if every disjoint \text{\rm w}-null sequence 
$(x_n)$ in $E$ is uniformly null on $B$ (cf. \cite{AqBo,Emma}). 
\end{enumerate}}
\end{definition}
\noindent
It follows that every \text{\rm a}-limited set is 
\text{\rm a-DP}, and every \text{\rm a-DP}-set is bounded.
Since \text{\rm w}$^\ast$-null sequences in $E'$ are bounded,
Assertion \ref{Meyer 3.6.2} implies that each \text{\rm LW}-set 
is \text{\rm a}-limited. The next fact is an immediate consequence
of Definition \ref{Main Schur property}~b), 
Definition \ref{Schur property}~f), and 
Definition \ref{alim and a-DP}.

\begin{assertion}\label{GP vs a-DP}
The following holds.
\begin{enumerate}[{\em (i)}] 
\item
Let $A\subseteq X\in\text{\rm (GP)}$. Then
$A$ is a \text{\rm DP-}set iff $A$ is limited.
\item
Let $A\subseteq E\in\text{\rm (DGP)}$. Then
$A$ is an \text{\rm a-DP-}set iff $A$ is \text{\rm a-}limited.
\end{enumerate}
\end{assertion}
\noindent
By \cite[Thm.2.6]{CCJ}, the norm in $E$ 
is $\text{\rm o}$-continuous iff each \text{\rm a}-limited subset 
of $E$ is an \text{\rm LW}-set.
The next fact (cf. Assertion \ref{limited and DP sets})
follows directly from Definition \ref{alim and a-DP}
and Proposition \ref{uniform convergence on A}.

\begin{assertion}\label{a-limited and a-DP sets}
Let $A$ be a bounded subset of $E$. Then
\begin{enumerate}[{\em (i)}]
\item
$A$ is an \text{\rm a}-limited set iff $f_n(a_n)\to 0$ for every disjoint 
\text{\rm w}$^\ast$-null $(f_n)$ in $E'$ and every $(a_n)$ in $A$.
\item
$A$ is an \text{\rm a-DP}-set iff $f_n(a_n)\to 0$ for every disjoint 
\text{\rm w}-null $(f_n)$ in $E'$ and every $(a_n)$ in $A$ 
{\em \cite[Prop.2.1]{Bou}}.
\end{enumerate}
A bounded subset $B$ of $E'$ is an \text{\rm a-L}-{\em set}
iff $b_n(x_n)\to 0$ for every $(b_n)$ in $B$ and every 
disjoint \text{\rm w}-null $(x_n)$ in $E$.
\end{assertion}

\begin{proposition}\label{b-bounded set is a-DP}
Every \text{\rm b}-bounded subset $A$ of $E$ is an \text{\rm a-DP}-set.
\end{proposition}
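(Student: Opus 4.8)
The plan is to verify the sequential characterization of a-DP-sets supplied by Assertion~\ref{a-limited and a-DP sets}(ii): it suffices to show that $f_n(a_n)\to 0$ for every disjoint \text{\rm w}-null sequence $(f_n)$ in $E'$ and every sequence $(a_n)$ in $A$. First I would unwind the hypothesis that $A$ is \text{\rm b}-bounded, i.e. that $i(A)$ is order bounded in $E''$: this yields some $\phi\in E''_+$ with $|i(a)|\le\phi$ for all $a\in A$. In particular $A$ is norm bounded, since order intervals in the Banach lattice $E''$ are norm bounded and $i$ is an isometric embedding, so $A$ already qualifies as a bounded set in the sense required for the a-DP definition.

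The decisive step is to move the disjointness and \text{\rm w}-nullness of $(f_n)$ onto the moduli. Applying Assertion~\ref{disj w-null is mod w-null} to the Banach lattice $E'$ (rather than to $E$), the disjoint \text{\rm w}-null sequence $(f_n)$ in $E'$ gives that $(|f_n|)$ is \text{\rm w}-null in $E'$, whence $\phi(|f_n|)\to 0$ because $\phi\in E''$. Combining the canonical identity $i(a_n)(f_n)=f_n(a_n)$ with the fundamental inequality $|\psi(g)|\le|\psi|(|g|)$ for $\psi\in E''$, $g\in E'$, and with the order bound $|i(a_n)|\le\phi$ (using $|f_n|\ge 0$), I obtain
$$
   |f_n(a_n)|=|i(a_n)(f_n)|\le|i(a_n)|(|f_n|)\le\phi(|f_n|)\to 0,
$$
which is exactly what Assertion~\ref{a-limited and a-DP sets}(ii) demands.

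Since each step is a direct invocation of an earlier fact, I do not anticipate a genuine obstacle. The only point requiring a moment of care is recognizing that Assertion~\ref{disj w-null is mod w-null} is stated for an arbitrary Banach lattice, so it applies verbatim to $E'$; this is what lets the disjointness hypothesis on $(f_n)$ be absorbed into the \text{\rm w}-nullness of $(|f_n|)$ and thereby reduces the whole claim to a single monotone estimate against the order bound $\phi$.
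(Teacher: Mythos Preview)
Your proof is correct and follows essentially the same approach as the paper's own proof: both pick an order bound $\phi\in E''_+$ for $i(A)$, use Assertion~\ref{disj w-null is mod w-null} (applied to $E'$) to get $(|f_n|)$ \text{\rm w}-null, and then bound $|f_n(a)|\le|i(a)|(|f_n|)\le\phi(|f_n|)\to 0$. The only cosmetic difference is that the paper verifies the uniform-null condition on $A$ directly, whereas you go through the equivalent sequential criterion of Assertion~\ref{a-limited and a-DP sets}(ii); your explicit invocation of Assertion~\ref{disj w-null is mod w-null} for the passage from $f_n$ to $|f_n|$ is in fact cleaner than the paper's somewhat implicit use of it.
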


\begin{proof}
Let $(x'_n)$ be a disjoint \text{\rm w}-null sequence in $E'$ and $A\subseteq E$ 
be \text{\rm b}-bounded. 
As the \text{\rm w}-convergent sequence $(x'_n)$ is bounded, 
without loss of generality, 
we may assume $(x'_n)\subset B_{E'}$. 
There exists $f\in E''_+$ such that $A\subset [-f,f]$. 
Given $\varepsilon>0$, 
there is $n_\varepsilon\in{\mathbb N}$ with $|f(x'_n)|<\varepsilon$ 
for all $n \ge n_\varepsilon$. For every $a\in A$:
$$
   |x'_n(a)|\le |x'_n|(|a|)\le|x'_n|(f)= f(|x'_n|)<\varepsilon \ \ \ (\forall n\ge n_\varepsilon).
$$
Therefore $\sup\limits_{a\in A}|x'_n(a)|<\varepsilon$ if $n \ge n_\varepsilon$, 
and hence $A$ is an
\text{\rm a-DP}-set.
\end{proof}
\noindent 
Clearly, $[-f,f]$ is an \text{\rm a-L}-set for each $f\in E'_+$,
and $B_{\ell^\infty}$ is an \text{\rm a-L}-set in $\ell^\infty$, 
but $B_{\ell^1}$ is not an \text{\rm a-L}-set in $\ell^1$.
In fact, each order bounded subset of $E'$ is an \text{\rm a-L}-set
by Assertion~\ref{disj w-null is mod w-null}.
\text{\rm L}-subsets of $E$ coincide with \text{\rm a-L}-subsets iff lattice operations 
in $E$ are sequentially \text{\rm w}-continuous \cite[Thm.4.1]{AqBo}.
Observe that $[-y, y]$ is an \text{\rm a-L}-set in $E''$ 
for each $y\in E_+''$. Indeed, it follows from the inequality 
$|f(x_n)|\le|f|(|x_n|)\le y(|x_n|)$ for all $x_n\in E'$, $y\in E''_+$, 
and all $f\in [-y,y]$. Since each \text{\rm b}-bounded subset of $E$ 
can be considered as a subset of $[-y,y]$ for some $y\in E_+''$, 
it follows that every \text{\rm b}-bounded subset of $E$ 
is an \text{\rm a-L}-subset of $E''$.

\subsection{}
As every Dedekind $\sigma$-complete $E$ has 
the property (d) \cite[Prop.1.4]{Wnuk3},
the following proposition slightly generalizes \cite[Thm.2.5]{CCJ}
(e.g. $\ell^{\infty}/c_0\in\text{\rm (d)}$ yet $\ell^{\infty}/c_0$ is 
not Dedekind $\sigma$-complete \cite[Rem.1.5]{Wnuk3}). We skip 
the proof, as it is essentially 
the same with the proof in \cite[Thm.2.5]{CCJ}.

\begin{proposition}\label{a-limited and a-DP sets}
Let $E\in\text{\rm (d)}$ and let $A$ be a bounded solid subset of $E$.
Then the following statements are equivalent.
\begin{enumerate}[\em i)]
\item
$A$ is \text{\rm a}-limited.
\item
$f_n(a_n)\to 0$ for every disjoint \text{\rm w}$^\ast$-null $(f_n)$ in $E'$ 
and every disjoint $(a_n)$ in $A$.
\item
$f_n(a_n)\to 0$ for every disjoint \text{\rm w}$^\ast$-null $(f_n)$ in $E'_+$ 
and every disjoint $(a_n)$ in $A\cap E_+$.
\end{enumerate}
\end{proposition}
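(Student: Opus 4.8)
The implications i)$\Longrightarrow$ii)$\Longrightarrow$iii) are immediate and carry no content: each successive statement only requires $f_n(a_n)\to 0$ for a strictly smaller family of sequences (first we restrict the $(a_n)$ to disjoint sequences, then we further restrict both $(f_n)$ and $(a_n)$ to their positive cones). So the whole substance is iii)$\Longrightarrow$i), and the plan is to prove it by contraposition, following the scheme of \cite[Thm.2.5]{CCJ} but feeding in property~(d) at exactly the spot where that argument uses Dedekind $\sigma$-completeness. Assume $A$ is not \text{\rm a}-limited. By Definition~\ref{alim and a-DP}~a) there is a disjoint \text{\rm w}$^\ast$-null sequence $(f_n)$ in $E'$ that is not uniformly null on $A$, and then by Proposition~\ref{uniform convergence on A}~i) there are a sequence $(a_n)$ in $A$ and an $\varepsilon>0$ with $|f_n(a_n)|\ge\varepsilon$ for all $n$ (after passing to a subsequence).

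The first and decisive step is a \emph{modulus reduction}. Since $(f_n)$ is disjoint, so is the sequence $(|f_n|)$, and because $E\in\text{\rm (d)}$ the sequence $(|f_n|)$ is again \text{\rm w}$^\ast$-null; put $g_n:=|f_n|\in E'_+$. As $A$ is solid we have $|a_n|\in A\cap E_+$, and from $|f_n(a_n)|\le|f_n|(|a_n|)$ we obtain $g_n(|a_n|)\ge\varepsilon$. Writing $c_n:=|a_n|$, we have thus reduced the problem to a disjoint \text{\rm w}$^\ast$-null sequence $(g_n)$ in $E'_+$ and a bounded sequence $(c_n)$ in $A\cap E_+$ with $g_n(c_n)\ge\varepsilon$. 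This is precisely where \cite{CCJ} invokes $\sigma$-completeness to pass to moduli; replacing that by property~(d) is what allows the hypothesis to be weakened, and everything downstream is lattice-theoretic and insensitive to the difference.

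It remains to \emph{disjointify the elements} $c_n$ so as to contradict iii). Using that $(g_n)$ is \text{\rm w}$^\ast$-null (so that $g_n(u)\to 0$ for every fixed $u\in E$), I would build a subsequence $(n_k)$ and positive elements $d_k$ recursively by truncation, e.g. $d_k:=\bigl(c_{n_k}-2^{k}\sum_{j<k}d_j\bigr)^+$, choosing each $n_k$ so large that $2^{k}\sum_{j<k}g_{n_k}(d_j)<\varepsilon/2$; this forces $g_{n_k}(d_k)\ge g_{n_k}(c_{n_k})-2^{k}\sum_{j<k}g_{n_k}(d_j)\ge\varepsilon/2$. Solidity of $A$ guarantees $0\le d_k\le c_{n_k}$, so each $d_k$ lies in $A\cap E_+$. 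The elementary Riesz inequality $(u-\lambda v)^+\wedge v\le\lambda^{-1}u$ (valid in the sublattice generated by $u,v$) yields $d_k\wedge d_l\le 2^{-k}c_{n_k}$ for $l<k$, so the mutual meets are summably small in norm. A standard extraction then upgrades $(d_k)$ to a genuinely disjoint sequence in $A\cap E_+$ on which the $g_{n_k}$ still act by at least $\varepsilon/4$, which contradicts iii) and finishes the proof.

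The main obstacle is this last step: the truncation only produces \emph{almost} disjoint elements, and converting $g_{n_k}(d_k)\ge\varepsilon/2$ together with summably small meets into an honestly disjoint sequence of positive elements of $A$ on which the functionals stay bounded below is the delicate, lattice-theoretic part of the argument. It is exactly here that \cite{CCJ} leans on $\sigma$-completeness; in the present setting the work is carried instead by the solidity of $A$ (which keeps every constructed element inside $A$) together with property~(d) (which made the modulus reduction legitimate in the first place).
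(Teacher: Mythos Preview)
Your proposal is correct and follows exactly the route the paper indicates: the paper omits the proof entirely, referring to \cite[Thm.~2.5]{CCJ} and noting that property~(d) substitutes for Dedekind $\sigma$-completeness at precisely the modulus-reduction step you identify. One clarification: in \cite{CCJ} the $\sigma$-completeness is invoked \emph{only} at that modulus step (to obtain $|f_n|\stackrel{\text{w}^\ast}{\to}0$), not in the subsequent disjointification of the elements, which is a pure vector-lattice argument requiring only the solidity of $A$ and goes through verbatim; your compressed ``standard extraction'' can be made rigorous by setting $e_k:=\bigl(d_k-\sum_{l\ne k}d_k\wedge d_l\bigr)^+$ (the sum converges in norm by your estimate) and checking $e_k\wedge e_l\le(d_k-d_k\wedge d_l)\wedge(d_l-d_k\wedge d_l)=0$.
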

\noindent
We do not know whether or not Proposition \ref{a-limited and a-DP sets} 
still holds true without the assumption $E\in\text{\rm (d)}$.

\subsection{Modifications of limited operators.}
We list the main definitions.

\begin{definition}\label{Main def of limited operators} 
{\em A continuous operator 
\begin{enumerate}[a)]
\item 
$T:X\to Y$ is called {\em limited} if $T(B_X)$ is limited \cite{Diestel};
i.e., $T'$ takes $\text{\rm w}^\ast$-null sequences of $Y'$ to norm null 
sequences of $X'$.
\item 
$T:X\to F$ is called {\em almost limited} if $T(B_X)$ is \text{\rm a}-limited 
\cite{EMM};
i.e., $T'$ takes disjoint $\text{\rm w}^\ast$-null sequences of $F'$ to 
norm null sequences of $X'$. 
\item 
$T:E\to Y$ is called {\em $\text{\rm o}$-limited}, if $T[0,x]$ is limited for all 
$x\in E_+$ \cite{KFMM};
i.e., for every $\text{\rm w}^\ast$-null sequence $(f_n)$ of $Y'$, 
$(T'f_n)$ is uniformly null on each order interval $[0,x]$ of $E_+$. 
\item 
$T:E\to F$ is called {\em almost $\text{\rm o}$-limited}, if $T[0,x]$ is \text{\rm a}-limited
for all $x\in E_+$;
i.e., for every disjoint $\text{\rm w}^\ast$-null sequence $(f_n)$ of $F'$,
$(T'f_n)$ is uniformly null on each $[0,x]\subseteq E_+$.
\item $T:E\to Y$ is called {\em \text{\rm b}-limited}, if $T(A)$ is limited for each 
\text{\rm b}-bounded subset $A$ of $E$.
\item $T:E\to F$ is called {\em almost \text{\rm b}-limited}, 
if $T(A)$ is \text{\rm a}-limited for each \text{\rm b}-bounded 
subset $A$ of $E$.
\end{enumerate}}
\end{definition}
\noindent
By $\text{\rm Lm}(X,Y)$, $\text{\rm a-Lm}(X,F)$, 
$\text{\rm o-Lm}(E,Y)$, 
$\text{\rm a-o-Lm}(E,F)$, $\text{\rm b-Lm}(E,Y)$, 
and $\text{\rm a-b-Lm}(E,F)$ we denote the spaces of operators 
in Definition~\ref{Main def of limited operators}.
Since compact subsets of $Y$ are limited and limited sets of $Y$ are bounded,
\begin{equation}\label{1b}
  {\text{\rm K}}(X,Y)\subseteq\text{\rm Lm}(X,Y)\subseteq\text{\rm L}(X,Y).
\end{equation}
In the case of operators from $E$ to $F$, we have 
the following obvious inclusions:
\begin{equation}\label{2b}
  \text{\rm Lm}(E,F)\subseteq\text{\rm a-Lm}(E,F)\cap\text{\rm b-Lm}(E,F);
\end{equation}
\begin{equation}\label{3b}
  \text{\rm b-Lm}(E,F)\subseteq\text{\rm o-Lm}(E,F)\subseteq\text{\rm a-o-Lm}(E,F);
\end{equation}
\begin{equation}\label{4b}
  \text{\rm b-Lm}(E,F)\subseteq\text{\rm a-b-Lm}(E,F)\cap\text{\rm a-o-Lm}(E,F).
\end{equation}

\begin{example}\label{limited examples}
{\em All inclusions in \eqref{2b}--\eqref{4b} are proper.
\begin{enumerate}[a)]
\item
Consider the identity operator $I_{\ell^\infty}$.
Since $B_{\ell^\infty}$ is an \text{\rm a}-limited but not limited set, then 
\begin{equation}\label{5b}
  I_{\ell^\infty}\in(\text{\rm a-Lm}(\ell^\infty)\cap\text{\rm b-Lm}(\ell^\infty))
  \setminus\text{\rm Lm}(\ell^\infty).
\end{equation}
\item
Since order bounded subsets of $L^1[0,1]$ are limited \cite[Rem.2.4.(2)]{CCJ} 
and coincide with b-bounded subsets, and $B_{L^1[0,1]}$ is \text{\rm a}-limited 
but not limited, then
\begin{equation}\label{6b}
  I_{L^1[0,1]}\in\text{\rm b-Lm}({L^1[0,1]})\setminus\text{\rm Lm}({L^1[0,1]}).
\end{equation}
\item
Since relatively compact subsets of $\ell^1$ coincide with limited subsets 
and with \text{\rm a}-limited subsets, then 
\begin{equation}\label{7b}
  I_{\ell^1}\in(\text{\rm a-b-Lm}(\ell^1)\cap
  \text{\rm o-Lm}(\ell^1))\setminus\text{\rm Lm}(\ell^1).
\end{equation}
\end{enumerate}}
\end{example}

\begin{assertion}\label{o-limited MW T is limited} 
{\em (\cite[Thm.7]{FKM})}
$\text{\rm o-Lm}(E,F)\cap\text{\rm MW}(E,F)\subseteq\text{\rm Lm}(E,F)$.
\end{assertion}

\begin{proposition}\label{o-LW MW is alimited}
$
  \text{\rm o-LW}(E,F)\cap\text{\rm MW}(E,F)\subseteq\text{\rm a-Lm}(E,F).
$
\end{proposition}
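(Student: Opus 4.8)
The plan is to work with the adjoint description of almost limited operators recorded in Definition~\ref{Main def of limited operators}~b): an operator $T\in\text{\rm o-LW}(E,F)\cap\text{\rm MW}(E,F)$ lies in $\text{\rm a-Lm}(E,F)$ exactly when $\|T'f_n\|\to 0$ for every disjoint $\text{\rm w}^\ast$-null sequence $(f_n)$ in $F'$. So I would fix such a sequence and prove this norm convergence inside the Banach lattice $E'$ by appealing to the Dodds--Fremlin criterion of Assertion~\ref{Dodds-Fremlin}(ii): the sequence $(T'f_n)$ is norm null as soon as (a) $|T'f_n|$ is $\text{\rm w}^\ast$-null in $E'$, and (b) $(T'f_n)(y_n)\to 0$ for every disjoint bounded $(y_n)$ in $E_+$. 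The whole argument then splits into checking these two conditions, with (b) carried by the M-weak compactness of $T$ and (a) by its order L-weak compactness.

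Condition (b) is the routine half. Since $(f_n)$ is $\text{\rm w}^\ast$-null it is bounded, say $\|f_n\|\le C$; for a disjoint bounded $(y_n)$ in $E_+$ the hypothesis $T\in\text{\rm MW}(E,F)$ gives $\|Ty_n\|\to 0$, whence $|(T'f_n)(y_n)|=|f_n(Ty_n)|\le C\|Ty_n\|\to 0$.

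The content of the proof is condition (a), and here is where I expect the one genuinely nonroutine move. I would fix $y\in E_+$, note that the order interval $[-y,y]$ is order bounded, and conclude from $T\in\text{\rm o-LW}(E,F)$ that $T[-y,y]$ is an $\text{\rm LW}$-subset of $F$. The characterization of $\text{\rm LW}$-sets in Assertion~\ref{Meyer 3.6.2}, equivalence (i)$\Leftrightarrow$(ii), applied in $F$ then says that the disjoint bounded sequence $(f_n)$ is uniformly null on $T[-y,y]$, i.e. $\sup_{z\in[-y,y]}|f_n(Tz)|\to 0$. The decisive observation is the Riesz--Kantorovich identity $|T'f_n|(y)=\sup_{|z|\le y}|(T'f_n)(z)|=\sup_{z\in[-y,y]}|f_n(Tz)|$, valid since $E'$ is a Dedekind complete Banach lattice; it shows $|T'f_n|(y)\to 0$ for every $y\in E_+$, which is precisely $\text{\rm w}^\ast$-nullity of $(|T'f_n|)$. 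Feeding (a) and (b) into Assertion~\ref{Dodds-Fremlin}(ii) yields $\|T'f_n\|\to 0$, hence $T\in\text{\rm a-Lm}(E,F)$. The main obstacle is thus conceptual rather than computational: seeing that the order-interval form of L-weak compactness is exactly what governs the moduli $|T'f_n|$ through the identity $|T'f_n|(y)=\sup_{z\in[-y,y]}|f_n(Tz)|$, while disjoint $\text{\rm w}^\ast$-nullity of $(f_n)$ is the right input for the $\text{\rm LW}$-set test; once that bridge is in place, Dodds--Fremlin finishes the job, and no separate use of the duality $T\in\text{\rm MW}\Leftrightarrow T'\in\text{\rm LW}$ from Assertion~\ref{Meyer-Nieberg} is required.
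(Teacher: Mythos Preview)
Your proof is correct and follows the same overall strategy as the paper: reduce to $\|T'f_n\|\to 0$ via the Dodds--Fremlin criterion (Assertion~\ref{Dodds-Fremlin}(ii)), handle the disjoint-sequence part with $T\in\text{\rm MW}$, and handle the $\text{\rm w}^\ast$-nullity of $(|T'f_n|)$ with $T\in\text{\rm o-LW}$. The only difference is that for this last step the paper simply cites \cite[Thm.2.1]{BLM2}, whereas you supply a self-contained argument via Assertion~\ref{Meyer 3.6.2} and the Riesz--Kantorovich identity $|T'f_n|(y)=\sup_{|z|\le y}|f_n(Tz)|$; your argument is essentially a proof of the cited fact.
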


\begin{proof}
Let $T\in\text{\rm o-LW}(E,F)\cap\text{\rm MW}(E,F)$ and let $(f_n)$ be 
disjoint  $\text{\rm w}^\ast$-null in $B_F$. We claim $\|T'f_n\|\to 0$.
By Assertion~\ref{Dodds-Fremlin}(ii), it suffices to show that: 

1) 
$(|T'f_n|)$ is $\text{\rm w}^\ast$-null; and 

2) 
$|T'f_n(x_n)|\to 0$ for each disjoint bounded 
sequence $(x_n)$ in $E_+$.\\
Since $T$ is o-LW, $|T'f_n|\stackrel{\text{\rm w}^\ast}{\to} 0$ 
by \cite[Thm.2.1]{BLM2}, which proofs 1).
Let $(x_n)$ be a disjoint bounded sequence in $E_+$. 
Since $T$ is MW, $\|Tx_n\|\to 0$. In view of 
$$
  |T'f_n(x_n)|=|f_n(Tx_n)|\le\|f_n\|\cdot \|Tx_n\|\le \|Tx_n\|,
$$
$|T'f_n(x_n)|\to 0$, which proofs 2).\\
Therefore $\|T'f_n\|\to 0$ and hence $T\in\text{\rm a-Lm}(E,F)$.
\end{proof}
\noindent
It is proved in \cite{MF} that the domination property is satisfied for: 
\begin{enumerate}[(i)]
\item 
$\text{\rm o}$-limited operators from every $E$ to 
every Dedekind $\sigma$-complete $F$;
\item
limited operators from every $E$ such that the norm in $E'$ 
is $\text{\rm o}$-continuous 
to every Dedekind $\sigma$-complete $F$. 
\end{enumerate}
\noindent
An operator $T:E\to X$ is said to be AM-{\em compact} 
if $T[0,x]$ is relatively 
compact for each $x\in E_+$.  
Thus every AM-compact operator is o-limited. 
An operator $T:E\to X$ is said to be b-AM-{\em compact} if it carries 
b-bounded subsets of $E$ to a relatively compact subsets of $X$ \cite{AM}. 
As every b-AM-compact operator is AM-compact, 
each b-AM-compact operator is o-limited. 

\subsection{Some conditions under which operators are (almost) limited.} 

\begin{proposition}\label{semi-compact vs limited}
For any Banach space $X$$:$
\begin{enumerate}[\em i)]
\item
If $F\in\text{\rm (d)}$, 
then $\text{\rm se\-mi-K}(X,F)\subseteq\text{\rm a-Lm}(X,F)$.
\item
If the lattice operations in $F$ are 
sequentially $\text{\rm w}^\ast$-continuous, then\\ 
$\text{\rm se\-mi-K}(X,F)\subseteq \text{\rm Lm}(X,F)$.
\end{enumerate}
\end{proposition}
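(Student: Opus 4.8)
The plan is to pass to the dual formulations recorded in Definition~\ref{Main def of limited operators}: for i) it suffices to show that $T'$ carries each disjoint $\text{\rm w}^\ast$-null sequence $(f_n)$ of $F'$ to a norm-null sequence of $X'$, equivalently that $(f_n)$ is uniformly null on $T(B_X)$; for ii) the same, but with $(f_n)$ now ranging over all $\text{\rm w}^\ast$-null sequences of $F'$. The single mechanism behind both parts is that the almost order boundedness of $T(B_X)$ lets me replace the action of $f_n$ on $T(B_X)$ by the action of $|f_n|$ on one dominating positive element, up to a controllable error.

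For i), let $T\in\text{\rm semi-K}(X,F)$, let $(f_n)$ be disjoint $\text{\rm w}^\ast$-null in $F'$, and put $M:=\sup_n\|f_n\|<\infty$ (a $\text{\rm w}^\ast$-null sequence is bounded). Because $F\in\text{\rm (d)}$, the sequence $(|f_n|)$ is again $\text{\rm w}^\ast$-null. Fix $\varepsilon>0$; since $T$ is semi-compact, $T(B_X)$ is almost order bounded, so there is $u\in F_+$ with $T(B_X)\subseteq[-u,u]+\varepsilon B_F$. For each $x\in B_X$ write $Tx=v+w$ with $v\in[-u,u]$ and $\|w\|\le\varepsilon$; then
\begin{equation*}
  |f_n(Tx)|\le|f_n(v)|+|f_n(w)|\le|f_n|(u)+M\varepsilon,
\end{equation*}
using $|f_n(v)|\le|f_n|(|v|)\le|f_n|(u)$ (as $|v|\le u$ and $|f_n|\ge 0$) together with $|f_n(w)|\le\|f_n\|\,\|w\|$. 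Taking the supremum over $x\in B_X$ and then $\limsup_n$, and using $|f_n|(u)\to 0$, I obtain $\limsup_n\sup_{x\in B_X}|f_n(Tx)|\le M\varepsilon$. As $\varepsilon>0$ is arbitrary, this supremum tends to $0$, so $(f_n)$ is uniformly null on $T(B_X)$, i.e. $T(B_X)$ is \text{\rm a}-limited and $T\in\text{\rm a-Lm}(X,F)$.

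For ii) the computation is verbatim the same, the only change being that $(f_n)$ is now an arbitrary $\text{\rm w}^\ast$-null sequence in $F'$, and that the sequential $\text{\rm w}^\ast$-continuity of the lattice operations in $F$ (rather than property~(d)) is what guarantees that $(|f_n|)$, and hence $|f_n|(u)$, is $\text{\rm w}^\ast$-null. This yields $\sup_{x\in B_X}|f_n(Tx)|\to 0$ for every $\text{\rm w}^\ast$-null $(f_n)$, i.e. $T(B_X)$ is limited and $T\in\text{\rm Lm}(X,F)$. I expect no genuine obstacle here; the one point to be careful about is to invoke exactly the right hypothesis on $F$ in each part to force $|f_n|(u)\to 0$, since this is precisely where $\text{\rm (d)}$ and the $\text{\rm w}^\ast$-continuity of the lattice operations enter.
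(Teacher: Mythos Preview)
Your proof is correct and follows essentially the same approach as the paper: both use the almost order boundedness $T(B_X)\subseteq[-u,u]+\varepsilon B_F$ together with the estimate $|f_n(v)|\le|f_n|(u)$ for $v\in[-u,u]$, invoking property~(d) (resp.\ sequential $\text{w}^\ast$-continuity of the lattice operations in $F'$) to get $|f_n|(u)\to 0$. The only cosmetic difference is that the paper first isolates the lemma ``$[-u,u]$ is (a-)limited'' and then appeals to the stability of (a-)limited sets under $\varepsilon B_F$-perturbations, whereas you carry out the single estimate $|f_n(Tx)|\le|f_n|(u)+M\varepsilon$ directly on $T(B_X)$.
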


\begin{proof}
Let $T\in\text{\rm se\-mi-K}(X,F)$. Then 
$T(B_X)\subseteq [-u,u]+\varepsilon B_F$ for some $u\in F_+$. 
As $T(B_X)$ is limited (\text{\rm a}-limited) 
if, for every $\varepsilon>0$, there exists a limited 
(resp. \text{\rm a}-limited) $A_\varepsilon\subseteq F$ such that 
$T(B_X)\subseteq A_\varepsilon+\varepsilon B_F$, 
it suffices to show that $[-u,u]$ is \text{\rm a}-limited (limited) in $F$. 

i) Let $(a_n)$ be a sequence in $[-u,u]$ and let $(f_n)$ be 
a disjoint $\text{\rm w}^\ast$-null sequence in $F'$. Since
$F\in\text{\rm (d)}$ then $(|f_n|)$ is $\text{\rm w}^\ast$-null.
It follows from $|f_n(a_n)|\le|f_n|(|a_n|)\le|f_n|(u)\to 0$ that
$f_n(a_n)\to 0$, and hence $[-u,u]$ is \text{\rm a}-limited
by Assertion \ref{a-limited and a-DP sets}~(i).

ii) This is similar to proof of part~(i) with the replacement of
Assertion \ref{a-limited and a-DP sets}~(i) by
Assertion \ref{limited and DP sets}~(i) and is omitted.
\end{proof}

\begin{proposition}\label{T is limited if T is MW and...}
If the lattice operations in $E'$ are sequentially $\text{\rm w}^\ast$-conti\-nuous,
then $\text{\rm MW}(E,F)\subseteq\text{\rm Lm}(E,F)$ for each $F$.
\end{proposition}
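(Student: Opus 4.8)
The plan is to verify the second description of a limited operator recorded in Definition~\ref{Main def of limited operators}~a): it suffices to show that the adjoint $T'$ sends $\text{\rm w}^\ast$-null sequences of $F'$ to norm null sequences of $E'$. So I would fix a $\text{\rm w}^\ast$-null sequence $(f_n)$ in $F'$ and reduce the whole statement to proving $\|T'f_n\|\to 0$ in $E'$.

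To establish this, the natural tool is Assertion~\ref{Dodds-Fremlin}(ii), which says that a sequence $(g_n)$ in $E'$ is norm null precisely when $(|g_n|)$ is $\text{\rm w}^\ast$-null and $g_n(x_n)\to 0$ for every disjoint bounded $(x_n)$ in $E_+$. Applied to $g_n:=T'f_n$, this splits the task into two checks, exactly paralleling the proof of Proposition~\ref{o-LW MW is alimited}.

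For the first check, I would use that the adjoint $T'$ is $\text{\rm w}^\ast$-to-$\text{\rm w}^\ast$ continuous, so $(T'f_n)$ is $\text{\rm w}^\ast$-null in $E'$; the standing hypothesis that the lattice operations in $E'$ are sequentially $\text{\rm w}^\ast$-continuous then yields that $(|T'f_n|)$ is $\text{\rm w}^\ast$-null. This is the one place where the hypothesis enters, and it is what replaces the o-LW input (via \cite[Thm.2.1]{BLM2}) available in Proposition~\ref{o-LW MW is alimited}; note that here one must handle \emph{all} $\text{\rm w}^\ast$-null $(f_n)$, not merely disjoint ones, which is precisely why the stronger continuity assumption on $E'$ is required. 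For the second check, I would compute $T'f_n(x_n)=f_n(Tx_n)$ for a disjoint bounded $(x_n)$ in $E_+$, estimate $|f_n(Tx_n)|\le\|f_n\|\cdot\|Tx_n\|$, and conclude using the boundedness of the $\text{\rm w}^\ast$-null sequence $(f_n)$ together with $\|Tx_n\|\to 0$, which holds because $T\in\text{\rm MW}(E,F)$.

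I expect no serious obstacle: the argument is a clean two-step application of Assertion~\ref{Dodds-Fremlin}(ii). The only point deserving care is confirming that the sequential $\text{\rm w}^\ast$-continuity of the lattice operations in $E'$ legitimately applies to the sequence $(T'f_n)$, which it does, since $(T'f_n)$ is a genuine $\text{\rm w}^\ast$-null sequence in $E'$ regardless of disjointness. Combining the two checks gives $\|T'f_n\|\to 0$, whence $T(B_E)$ is limited and $T\in\text{\rm Lm}(E,F)$.
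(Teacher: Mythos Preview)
Your proposal is correct and follows essentially the same argument as the paper: both use the $\text{\rm w}^\ast$-to-$\text{\rm w}^\ast$ continuity of $T'$ together with the hypothesis on $E'$ to get $(|T'f_n|)$ $\text{\rm w}^\ast$-null, then the MW property and boundedness of $(f_n)$ to handle disjoint bounded sequences, and conclude via Assertion~\ref{Dodds-Fremlin}(ii). The paper's proof is slightly terser but structurally identical.
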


\begin{proof}
Let $T\in\text{\rm MW}(E,F)$ and let $(f_n)$ be $\text{\rm w}^\ast$-null 
in $F'$. Then $(T'f_n)$ be $\text{\rm w}^\ast$-null in $E'$. 
The sequential $\text{\rm w}^\ast$-continuity of lattice operations in $E'$ 
ensures that $(|T'f_n|)$ is $\text{\rm w}^\ast$-null in $E'$. 
Note that $\|f_n\|\le M\in\mathbb{R}$ for all $n\in\mathbb{N}$.
Let $(x_n)$ be disjoint bounded in $E_+$. Since $T\in\text{\rm MW}(E,F)$, 
then $(Tx_n)$ is norm null, and
$
   |T'(f_n)x_n|=|f_n(Tx_n)|\le\|f_n\|\cdot\|Tx_n\|\le M\|Tx_n\|\to 0.
$
By Assertion~\ref{Dodds-Fremlin}(ii), $(T'f_n)$ is norm null,
and hence $T$ is limited.
\end{proof}
\noindent
The next result gives a necessary and sufficient conditions on $F$ under which 
all continuous operators from $X$ to $F$ are \text{\rm a-}limited.

\begin{theorem}\label{dual Shur vs almost limited} 
{\rm 
Let $F$ be a Banach lattice. The following are equivalent. 
\begin{enumerate}[i)] 
\item 
$F\in\text{\rm (DDSP)}$.
\item 
$B_F$ is \text{\rm a}-limited.
\item 
The identity operator $I_F$ on $F$ is \text{\rm a}-limited.
\item
$\text{\rm a-Lm}(X,F)=\text{\rm L}(X,F)$ for every $X$.
\item
$\text{\rm a-Lm}(F)=\text{\rm L}(F)$.
\end{enumerate}}
\end{theorem}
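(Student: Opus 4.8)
The plan is to prove the theorem as a cycle of implications, exploiting the fact that almost limitedness is governed entirely by the behaviour of disjoint $\text{\rm w}^\ast$-null sequences in the dual. First I would establish the easy chain $(i)\Longrightarrow(iv)\Longrightarrow(v)\Longrightarrow(iii)\Longrightarrow(ii)\Longrightarrow(i)$, so that each step is either immediate or reduces to one short observation.

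For $(i)\Longrightarrow(iv)$, let $T\in\text{\rm L}(X,F)$ be arbitrary and let $(f_n)$ be disjoint $\text{\rm w}^\ast$-null in $F'$. Since $F\in\text{\rm (DDSP)}$, such a sequence is norm null, hence $\|T'f_n\|\le\|T'\|\,\|f_n\|\to 0$; by the dual reformulation of almost limitedness in Definition \ref{Main def of limited operators}~b), this says $T\in\text{\rm a-Lm}(X,F)$. Thus every continuous operator is almost limited, giving $(iv)$. The implications $(iv)\Longrightarrow(v)$ and $(v)\Longrightarrow(iii)$ are trivial specializations: $(v)$ is the case $X=F$, and $(iii)$ is obtained by taking $T=I_F$. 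For $(iii)\Longrightarrow(ii)$, recall that $I_F$ almost limited means precisely that $I_F(B_F)=B_F$ is an $\text{\rm a}$-limited set, which is the definition in Definition \ref{Main def of limited operators}~b) unwound.

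The one implication carrying genuine content is $(ii)\Longrightarrow(i)$, and I expect this to be the main obstacle. Suppose $B_F$ is $\text{\rm a}$-limited and let $(f_n)$ be a disjoint $\text{\rm w}^\ast$-null sequence in $F'$; I must show $\|f_n\|\to 0$. By Assertion \ref{Wnuk2013} it in fact suffices to treat the case $(f_n)\subseteq F'_+$, which is the form in which $\text{\rm (DDSP)}$ is most convenient. The point is that $B_F$ being $\text{\rm a}$-limited means, via Assertion \ref{a-limited and a-DP sets}~(i), that $f_n(a_n)\to 0$ for every disjoint $\text{\rm w}^\ast$-null $(f_n)$ in $F'$ and every choice $(a_n)$ in $B_F$. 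The natural move is to choose, for each $n$, an element $a_n\in B_F$ that nearly attains the norm $\|f_n\|$, i.e.\ $f_n(a_n)\ge\|f_n\|-\tfrac1n$ (or $\ge(1-\tfrac1n)\|f_n\|$); then almost limitedness forces $f_n(a_n)\to 0$, whence $\limsup_n\|f_n\|\le 0$ and the sequence is norm null. This is exactly $\text{\rm (DDSP)}$.

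The subtlety to handle carefully is that the witnesses $a_n$ must be produced so that $f_n(a_n)$ tracks $\|f_n\|$ while $(a_n)$ is a legitimate sequence in $B_F$; no disjointness of $(a_n)$ is required here, since Assertion \ref{a-limited and a-DP sets}~(i) allows an \emph{arbitrary} sequence in the $\text{\rm a}$-limited set. Thus the near-norm-attaining selection $a_n\in B_F$ with $f_n(a_n)\ge(1-\tfrac1n)\|f_n\|$ suffices, and $f_n(a_n)\to 0$ immediately yields $\|f_n\|\to 0$. I would remark in passing that this selection step is where the boundedness of $(\|f_n\|)$ (automatic for $\text{\rm w}^\ast$-null sequences) keeps everything well-defined, closing the cycle and completing the proof.
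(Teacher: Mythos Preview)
Your proof is correct and follows essentially the same approach as the paper; the paper organizes the implications as two overlapping cycles $i)\Rightarrow ii)\Rightarrow iii)\Rightarrow i)$ and $i)\Rightarrow iv)\Rightarrow v)\Rightarrow iii)$, while you write a single cycle, but the content of each step is identical. Two small remarks: your invocation of Assertion~\ref{Wnuk2013} is misplaced (that assertion concerns $\text{\rm (DPSP)}$, not $\text{\rm (DDSP)}$) and is in any case unnecessary, since your near-norm-attaining argument already handles arbitrary disjoint $\text{\rm w}^\ast$-null sequences; and the step $(ii)\Rightarrow(i)$ can be shortened by observing directly that ``$(f_n)$ uniformly null on $B_F$'' is, by definition of the dual norm, the statement $\|f_n\|\to 0$.
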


\begin{proof} 
i)$\Longrightarrow$ii)
Let $(f_n)$ be a disjoint \text{\rm w}$^\ast$-null in $F'$.
Since $F\in\text{\rm (DDSP)}$, $(f_n)$ is norm null. Then
$(f_n)$ is uniformly null on $B_F$, and $B_F$
is \text{\rm a}-limited. 

ii)$\Longrightarrow$iii) 
It is obvious.

iii)$\Longrightarrow$i)
Let $(f_n)$ be disjoint \text{\rm w}$^\ast$-null in $F'$.
Since $I_F$ is \text{\rm a}-limited, then 
$(f_n)=((I_F)'f_n)$ is norm null. So, $F\in\text{\rm (DDSP)}$.

i)$\Longrightarrow$iv) 
It is enough to show that $\text{\rm L}(X,F)\subseteq\text{\rm a-Lm}(X,F)$. 
Let $T\in\text{\rm L}(X,F)$ and let $(f_n)$ be disjoint $\text{\rm w}^\ast$-null in $F'$. 
Since $F\in\text{\rm (DDSP)}$, then $(f_n)$ is norm null in $F'$, and hence
$(T'f_n)$ is norm null in $F'$, as required.

iv)$\Longrightarrow$v)$\Longrightarrow$iii)
It is obvious.
\end{proof}

\begin{proposition}\label{dprop then MW is alimited}
If $E\in\text{\rm (d)}$ then 
$\text{\rm r-MW}(E,F)\subseteq\text{\rm r-a-Lm}(E,F)$ for all $F$. 
\end{proposition}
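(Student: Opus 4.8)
The plan is to collapse the whole statement to a single positive operator and then run a Dodds--Fremlin argument on its adjoint. First I would record that both $\text{\rm MW}(E,F)$ and $\text{\rm a-Lm}(E,F)$ are vector subspaces of $\text{\rm L}(E,F)$ with ${\cal P}\pm{\cal P}\subseteq{\cal P}\ne\emptyset$: for $\text{\rm a-Lm}$ this is immediate from the adjoint description in Definition~\ref{Main def of limited operators}~b), since $(S-T)'=S'-T'$ again carries disjoint $\text{\rm w}^\ast$-null sequences of $F'$ to norm null sequences of $E'$. Hence, by Assertion~\ref{prop elem}(i), membership $T\in\text{\rm r-MW}(E,F)$ means that $T$ is an MW-dominated MW-operator, say $T=T_1-T_2$ with $T_1,T_2\in\text{\rm MW}(E,F)\cap{\cal L}_+(E,F)$ and $\pm T\le U$ for some positive $U\in\text{\rm MW}(E,F)$. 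If I can show that every positive MW-operator is almost limited, then $T_1,T_2,U$ all lie in $\text{\rm a-Lm}(E,F)$; as $\text{\rm a-Lm}$ is a vector space this yields $T=T_1-T_2\in\text{\rm a-Lm}(E,F)$, while $\pm T\le U$ with $U$ a positive a-Lm-operator witnesses that $T$ is a-Lm-dominated, so $T\in\text{\rm r-a-Lm}(E,F)$ by Assertion~\ref{prop elem}(i). Thus everything reduces to the claim that a positive $U\in\text{\rm MW}(E,F)$ is almost limited.

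To prove this claim I would fix a disjoint $\text{\rm w}^\ast$-null $(f_n)$ in $F'$, normalised so that $\|f_n\|\le 1$, and aim at $\|U'f_n\|\to 0$. Since $U'$ is $\text{\rm w}^\ast$-to-$\text{\rm w}^\ast$ continuous, $(U'f_n)$ is $\text{\rm w}^\ast$-null in $E'$, and I would invoke the Dodds--Fremlin criterion Assertion~\ref{Dodds-Fremlin}(ii): it suffices to check that (1) $(|U'f_n|)$ is $\text{\rm w}^\ast$-null in $E'$, and (2) $U'f_n(z_n)\to 0$ for every disjoint bounded $(z_n)$ in $E_+$. Condition (2) is the easy half and is exactly where M-weak compactness enters: $|U'f_n(z_n)|=|f_n(Uz_n)|\le\|f_n\|\,\|Uz_n\|\le\|Uz_n\|\to 0$ because $U\in\text{\rm MW}(E,F)$ kills the disjoint bounded sequence $(z_n)$ (equivalently, one may read $U'$ as an LW-operator via Assertion~\ref{Meyer-Nieberg}(i)). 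Finally Proposition~\ref{uniform convergence on A}\,i) packages (1) and (2) into $\|U'f_n\|\to 0$.

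The real content, and the step I expect to be the main obstacle, is condition (1). The naive reflex of applying property~(d) to $(U'f_n)$ to pass from $\text{\rm w}^\ast$-nullity to $\text{\rm w}^\ast$-nullity of the modulus fails, because property~(d) governs only \emph{disjoint} $\text{\rm w}^\ast$-null sequences, whereas $(U'f_n)$ is in no way disjoint in $E'$ (a positive $U'$ does not preserve disjointness). My route around this is to move the modulus computation upstream: positivity of $U'$ gives, for every $x\in E_+$, the pointwise bound $|U'f_n|(x)=\sup_{|y|\le x}f_n(Uy)\le|f_n|(Ux)$, so that $(|U'f_n|)$ is $\text{\rm w}^\ast$-null as soon as $(|f_n|)$ is $\text{\rm w}^\ast$-null, and the latter is an instance of property~(d) applied to the genuinely disjoint sequence $(f_n)$. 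The delicate point to settle is precisely the placement of the hypothesis: this transport invokes (d) for the disjoint sequence $(f_n)$ sitting in the dual of the range, and reconciling that with the stated assumption $E\in\text{\rm(d)}$ (rather than on $F$) is the crux the proof must resolve---by exploiting the regular/MW structure to bring disjointness, and hence property~(d) of $E$, to bear on the relevant sequence in $E'$.
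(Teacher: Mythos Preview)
Your approach is exactly the paper's: reduce to positive operators, apply the Dodds--Fremlin criterion Assertion~\ref{Dodds-Fremlin}(ii) to $(T'f_n)$, get condition~(2) directly from $T\in\text{\rm MW}(E,F)$, and obtain condition~(1) from the inequality $|T'f_n|\le T'|f_n|$ together with property~(d) applied to the disjoint sequence $(f_n)$. The paper does precisely this, in the same order.

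You are also right about the obstacle you flag. The paper's proof takes $(f_n)$ disjoint $\text{\rm w}^\ast$-null in the dual of the \emph{range} (the printed ``$E'$'' there is a slip for $F'$, since otherwise $T'f_n$ is meaningless) and then writes ``$(|f_n|)$ is also $\text{\rm w}^\ast$-null by the property~(d)''. That step uses property~(d) for $F$, not for $E$. In other words, the paper's own argument is exactly the one you outlined, and the hypothesis it actually invokes is $F\in\text{\rm (d)}$; compare the parallel statements Proposition~\ref{semi-compact vs limited}\,i), Proposition~\ref{d-prop implies semi is a-G}, Theorem~\ref{positive a-LW is a-G}, and Proposition~\ref{when r-a-G is L-r}, all of which place (d) on $F$. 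So your worry is well founded, but the resolution is not a clever transport of disjointness to $E'$ (which indeed cannot be made to work, since $T'$ does not preserve disjointness); it is simply that the stated hypothesis is a misprint for $F\in\text{\rm (d)}$. With that correction your proof is complete and coincides with the paper's.
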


\begin{proof}
It is enough to show $\text{\rm MW}_+(E,F)\subseteq\text{\rm a-Lm}_+(E,F)$.
Let $0\le T\in\text{\rm MW}(E,F)$ and let $(f_n)$ be disjoint  
$\text{\rm w}^\ast$-null in $E'$. Then $(|f_n|)$ is also $\text{\rm w}^\ast$-null
by the property (d). 
Since $|T'f_n|\le T'|f_n|$, the sequence $(|T'f_n|)$ 
is $\text{\rm w}^\ast$-null 
in $E'$. Thus, to show $\|T'f_n\|\to 0$, it suffices 
to show $T'f_n(x_n)\to 0$ for each disjoint bounded
$(x_n)$ in $E_+$. Consider
$
  |T'f_n(x_n)|=|f_n(Tx_n)|\le\|f_n\|\cdot\|Tx_n\|.
$
Since $\|f_n\|\le M$ for some $M$ and all $n$, and 
since $\|Tx_n\|\to 0$ as $T\in\text{\rm MW}(E,F)$, 
it follows $|T'f_n(x_n)|\to 0$. Thus, $\|T'f_n\|\to 0$, 
and hence $T\in\text{\rm a-Lm}(E,F)$.
\end{proof}

\subsection{Regularly almost limited operators.}
We need the following lemma.

\begin{lemma}\label{alimited are closed}
If $\text{\rm a-Lm}(X,F)\ni T_k\stackrel{\|\cdot\|}{\to}T\in\text{\rm L}(X,F)$
then $T\in\text{\rm a-Lm}(X,F)$.
\end{lemma}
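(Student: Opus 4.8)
The goal is to show that the uniform limit of almost-limited operators is almost-limited, i.e.\ that $\text{\rm a-Lm}(X,F)$ is closed in the operator norm. The natural strategy is to work directly with the characterization of $\text{\rm a}$-limited operators via the adjoint: $T$ is $\text{\rm a}$-limited iff $T'$ carries disjoint $\text{\rm w}^\ast$-null sequences of $F'$ to norm-null sequences of $X'$ (Definition~\ref{Main def of limited operators}~b)). So the plan is to fix a disjoint $\text{\rm w}^\ast$-null sequence $(f_n)$ in $F'$ and prove $\|T'f_n\|\to 0$, exploiting the norm convergence $T_k\to T$ together with $\|T_k'f_n\|\to 0$ for each fixed $k$.

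First I would record that every $\text{\rm w}^\ast$-null sequence is bounded, so there is a constant $M$ with $\|f_n\|\le M$ for all $n$; passing to $B_{F'}$ up to scaling, I may assume $\|f_n\|\le 1$. The key estimate is the standard triangle-inequality split through an intermediate operator $T_k$:
\begin{equation*}
   \|T'f_n\|\le\|(T'-T_k')f_n\|+\|T_k'f_n\|
   \le\|T-T_k\|\cdot\|f_n\|+\|T_k'f_n\|
   \le\|T-T_k\|+\|T_k'f_n\|,
\end{equation*}
using $\|T'-T_k'\|=\|(T-T_k)'\|=\|T-T_k\|$. Given $\varepsilon>0$, I first choose $k$ so large that $\|T-T_k\|<\varepsilon$; this handles the first term uniformly in $n$. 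Then, since $T_k\in\text{\rm a-Lm}(X,F)$ and $(f_n)$ is disjoint $\text{\rm w}^\ast$-null in $F'$, we have $\|T_k'f_n\|\to 0$, so there is $n_\varepsilon$ with $\|T_k'f_n\|<\varepsilon$ for all $n\ge n_\varepsilon$. Combining the two bounds gives $\|T'f_n\|<2\varepsilon$ for all $n\ge n_\varepsilon$, and since $\varepsilon>0$ was arbitrary, $\|T'f_n\|\to 0$.

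I would then conclude that $T'$ sends every disjoint $\text{\rm w}^\ast$-null sequence of $F'$ to a norm-null sequence, which is exactly the criterion for $T$ to be $\text{\rm a}$-limited, so $T\in\text{\rm a-Lm}(X,F)$. There is no serious obstacle here: the argument is a routine $\varepsilon/2$ interchange-of-limits, and the only point requiring a moment of care is the choice of quantifier order—the index $k$ must be fixed first (using uniformity of the operator-norm bound in $n$), after which $n_\varepsilon$ is selected depending on that fixed $k$. The disjointness and $\text{\rm w}^\ast$-nullity of $(f_n)$ are used only through the defining property of each $T_k$ being $\text{\rm a}$-limited, so no extra lattice machinery is needed.
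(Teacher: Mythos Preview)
Your proof is correct and follows essentially the same approach as the paper: fix a disjoint $\text{\rm w}^\ast$-null sequence $(f_n)$ in $F'$, split $\|T'f_n\|$ via the triangle inequality through $T_k'$, choose $k$ first using $\|T_k-T\|\to 0$, then choose $n_\varepsilon$ using $T_k\in\text{\rm a-Lm}(X,F)$. The only cosmetic difference is that you normalize $\|f_n\|\le 1$ while the paper keeps an explicit bound $M$ from the uniform boundedness principle.
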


\begin{proof}
Let $(f_n)$ be disjoint $\text{\rm w}^\ast$-null in $F'$.
We need to show that $(T'f_n)$ is norm null $X'$. Let $\varepsilon>0$.
Choose any $k$ with $\|T'_k-T'\|\le\varepsilon$. Since
$T_k\in\text{\rm a-Lm}(X,F)$, there exists $n_0$ such that
$\|T_k'f_n\|\le\varepsilon$ whenever $n\ge n_0$.
As $(f_n)$ is $\text{\rm w}^\ast$-null, there exists $M\in\mathbb{R}$
such that $\|f_n\|\le M$ for all $n\in\mathbb{N}$.
Then 
$$
   \|T'f_n\|\le\|T'_kf_n\|+\|T'_kf_n-T'f_n\|\le
   \varepsilon+\|T'_k-T'\|\|f_n\|\le\varepsilon+M\varepsilon
$$
for $n\ge n_0$. Since $\varepsilon>0$ is arbitrary
then $(T'f_n)$ is norm null, as desired.
\end{proof}
\noindent
The next result follows directly from Theorem \ref{P-norm} 
and Lemma \ref{alimited are closed}.

\begin{theorem}\label{P-norm in r-a-Lm}
For arbitrary Banach lattices $E$ and $F$, 
$\text{\rm r-a-Lm}(E,F)$ is a Banach space 
under the enveloping norm.
\end{theorem}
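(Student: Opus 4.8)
The plan is to invoke the two results the statement explicitly cites, namely Theorem~\ref{P-norm} (completeness of the enveloping norm on $\text{\rm r-}{\cal P}(E,F)$ whenever ${\cal P}$ is a subspace of $\text{\rm L}(E,F)$ closed in the operator norm) and Lemma~\ref{alimited are closed} (that the operator-norm limit of almost limited operators is almost limited). To apply Theorem~\ref{P-norm} with ${\cal P}=\text{\rm a-Lm}(E,F)$, I must check its two hypotheses: that $\text{\rm a-Lm}(E,F)$ is a vector subspace of $\text{\rm L}(E,F)$, and that it is closed in the operator norm.

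First I would verify that $\text{\rm a-Lm}(E,F)$ is a linear subspace. Since an almost limited operator is by definition one whose image of $B_E$ is an \text{\rm a}-limited set, and since a finite sum (hence any linear combination after scaling) of \text{\rm a}-limited sets is \text{\rm a}-limited---this is the Banach-lattice analogue of the closed-absolute-convex-hull stability recorded in \S1.1 for limited and \text{\rm DP} sets, and it follows immediately from the characterization in Assertion~\ref{a-limited and a-DP sets}(i): if $f_n(a_n)\to 0$ and $f_n(b_n)\to 0$ for every disjoint $\text{\rm w}^\ast$-null $(f_n)$, then $f_n(\lambda a_n+\mu b_n)\to 0$ as well---the set $\text{\rm a-Lm}(E,F)$ is closed under addition and scalar multiplication. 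It plainly contains $0$. Thus it is a subspace of $\text{\rm L}(E,F)$.

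Next, closedness in the operator norm is exactly the content of Lemma~\ref{alimited are closed}: if $T_k\to T$ in operator norm with each $T_k\in\text{\rm a-Lm}(X,F)$ (taking $X=E$), then $T\in\text{\rm a-Lm}(E,F)$. Hence $\text{\rm a-Lm}(E,F)$ is an operator-norm-closed subspace of $\text{\rm L}(E,F)$, so Theorem~\ref{P-norm} applies verbatim and yields that $\text{\rm r-a-Lm}(E,F)$ is complete under the enveloping norm $\|\cdot\|_{\text{\rm r-a-Lm}}$; being a normed space that is complete, it is a Banach space.

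I do not expect any genuine obstacle here, since both ingredients are already in hand; the only point requiring a word of care is the subspace verification, because the paper states Lemma~\ref{Enveloping P-norm} and Theorem~\ref{P-norm} for a \emph{vector subspace} ${\cal P}$, so one should not skip confirming that $\text{\rm a-Lm}(E,F)$ is linear rather than merely a cone. Once that is noted, the proof is a direct citation of Theorem~\ref{P-norm} and Lemma~\ref{alimited are closed}, which is precisely why the authors remark that the result ``follows directly'' from those two statements.
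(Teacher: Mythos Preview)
Your proposal is correct and matches the paper's own proof, which is a one-line citation of Theorem~\ref{P-norm} and Lemma~\ref{alimited are closed}. Your additional care in checking that $\text{\rm a-Lm}(E,F)$ is a genuine linear subspace (not merely a cone) fills in a detail the paper leaves implicit, but the overall route is identical.
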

\noindent
Similarly, $\text{\rm r-Lm}(E,F)$, $\text{\rm r-o-Lm}(E,F)$,
$\text{\rm r-a-o-Lm}(E,F)$, $\text{\rm r-b-Lm}(E,F)$,
and\\ $\text{\rm r-a-b-Lm}(E,F)$
are all Banach space under their enveloping norms.

%%%%%%%%%%%%%%%%%%%%
\section{Almost Grothendieck operators.}
%%%%%%%%%%%%%%%%%%%%

This section is devoted to almost Grothendieck operators
introduced in \cite{GM}.

\subsection{Main definition.}
The Grothendieck property (the disjoint Grothen\-dieck property), 
that is considered as a property of the identity operator, 
motivates the following definition.

\begin{definition}\label{Main Grothendieck operators}
{\em A continuous operator
\begin{enumerate}[a)]
\item  
$T:X\to Y$ is called {\em Grothendieck} 
if $T'$ takes $\text{\rm w}^\ast$-null sequences of $Y'$
to \text{\rm w}-null sequences of $X'$$;$
\item 
$T:X\to F$ is called {\em almost Grothendieck} ($T$ is \text{\rm a-G})
if $T'$ takes disjoint $\text{\rm w}^\ast$-null sequences of $F'$ 
to \text{\rm w}-null sequences of $X'$ \cite[Def.3.1]{GM}.
\end{enumerate}}
\end{definition}
\noindent
If $T\in\text{\rm W}(X,Y)$ then
$(T'y'_n)$ is \text{\rm w}-null for each $\text{\rm w}^\ast$-null 
$(y'_n)$ in $Y'$ by \cite[Thm.5.23]{AlBu}, and hence 
$\text{\rm W}(X,Y)\subseteq\text{\rm G}(X,Y)$.
Clearly, $\text{\rm G}(X,F)\subseteq\text{\rm a-G}(X,F)$.
The identity operator $I:\ell^1\to\ell^1$ is \text{\rm a-G} yet not Grothendieck.
Since LW- and MW-operators 
are \text{\rm w}-compact, they are Gro\-then\-dieck and hence \text{\rm a-G}. 
If $E$ and $F$ are AM-spaces with strong order units $u_E$ and $u_F$, 
and $T:E\to F$ satisfies $Tu_E=u_F$, 
then $T'x'_n\xrightarrow[]{\text{\rm w}^\ast} 0$ in $E'$
for each disjoint ${\text{\rm w}^\ast}$-null sequence $(x'_n)$ in $F'$. 
In particular, $T'x'_n(u_E) = x'_n(Tu_E)\to 0$, and hence $\|T'x'_n\|\to 0$. 
Therefore $T'x'_n\xrightarrow[]{\text{\rm w}} 0$, and  $T$ is \text{\rm a-G}.
Definitions \ref{Main def of limited operators} and
\ref{Main Grothendieck operators} imply directly that 
$\text{\rm Lm}(X,Y)\subseteq\text{\rm G}(X,Y)$ and 
$\text{\rm a-Lm}(X,F)\subseteq\text{\rm a-G}(X,F)$.

\subsection{Conditions under which operators are almost Grothendieck.}
We begin with the following result. 

\begin{proposition}\label{d-prop implies semi is a-G}
If $F\in\text{\rm (d)}$ then $\text{\rm semi-K}(X,F)\subseteq\text{\rm a-G}(X,F)$.
\end{proposition}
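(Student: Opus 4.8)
The plan is to deduce this immediately from two facts already in hand. Since $F\in\text{\rm (d)}$, Proposition~\ref{semi-compact vs limited}(i) yields $\text{\rm semi-K}(X,F)\subseteq\text{\rm a-Lm}(X,F)$; and it was observed just after Definition~\ref{Main Grothendieck operators} that $\text{\rm a-Lm}(X,F)\subseteq\text{\rm a-G}(X,F)$, since a disjoint \text{\rm w}$^\ast$-null sequence that $T'$ carries to a \emph{norm} null sequence is in particular carried to a \text{\rm w}-null one. Composing the two inclusions gives the claim, so the proposition is essentially a corollary of Proposition~\ref{semi-compact vs limited}(i).

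For transparency I would also record the self-contained argument, which shows exactly where the hypothesis $F\in\text{\rm (d)}$ enters and in fact delivers the stronger conclusion $\|T'f_n\|\to 0$. Fix $T\in\text{\rm semi-K}(X,F)$ and a disjoint \text{\rm w}$^\ast$-null sequence $(f_n)$ in $F'$; note $M:=\sup_n\|f_n\|<\infty$. Given $\varepsilon>0$, semi-compactness provides $u\in F_+$ with $T(B_X)\subseteq[-u,u]+\varepsilon B_F$. The key step is that property~(d) makes $(|f_n|)$ \text{\rm w}$^\ast$-null, so $|f_n|(u)\to 0$; pick $n_\varepsilon$ with $|f_n|(u)\le\varepsilon$ for $n\ge n_\varepsilon$. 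Then for $x\in B_X$, writing $Tx=a+r$ with $a\in[-u,u]$ and $\|r\|\le\varepsilon$, the estimate
\[
   |f_n(Tx)|\le|f_n|(|a|)+\|f_n\|\,\|r\|\le|f_n|(u)+M\varepsilon\le(1+M)\varepsilon
\]
holds for all $n\ge n_\varepsilon$; taking the supremum over $x\in B_X$ gives $\|T'f_n\|\le(1+M)\varepsilon$, whence $\|T'f_n\|\to 0$ and a fortiori $(T'f_n)$ is \text{\rm w}-null.

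There is no genuine obstacle here: all the work is carried by Proposition~\ref{semi-compact vs limited}(i), of which this statement is a formal consequence. The one point deserving care is the use of property~(d) to replace $(f_n)$ by its moduli $(|f_n|)$; this is precisely what turns the order-interval domination $|f_n(a)|\le|f_n|(u)$ into a vanishing bound. Without (d) a disjoint \text{\rm w}$^\ast$-null sequence may have moduli that fail to be \text{\rm w}$^\ast$-null (cf. Example~\ref{DPSP but not DSP}), and the order-interval estimate would break down.
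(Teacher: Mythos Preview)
Your proposal is correct. The self-contained argument you give is essentially the paper's own proof: the paper also uses property~(d) to pass to $(|f_n|)$, invokes the almost-order-bounded decomposition of $T(B_X)$, and estimates $|f_n(Tx)|$ via $|f_n|(u)$ plus a small remainder to conclude $\|T'f_n\|\to 0$; the only cosmetic difference is that the paper writes the semi-compactness condition as $\|(|Tx|-u)^+\|\le\varepsilon/2$ and splits $|f_n|(|Tx|)\le|f_n|((|Tx|-u)^+)+|f_n|(|Tx|\wedge u)$, whereas you write $Tx=a+r$ with $a\in[-u,u]$. Your first route, citing Proposition~\ref{semi-compact vs limited}(i) and the inclusion $\text{\rm a-Lm}\subseteq\text{\rm a-G}$, is a legitimate shortcut the paper does not take explicitly, though it amounts to the same computation packaged earlier.
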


\begin{proof}
Let $T\in\text{\rm semi-K}(X,F)$, and let $(g_n)$ be disjoint 
$\text{\rm w}^\ast$-null in $F'$. 
By the uniform boundedness principle, 
we can suppose $(g_n)\subset B_{F'}$. 
As $F\in\text{\rm (d)}$, $(|g_n|)$ is also $\text{\rm w}^\ast$-null. 
Since $T$ is semi-compact, 
for a given $\varepsilon>0$, there is $u\in F_+$ with
$\|(|Tx|-u)^+\|\le\frac{\varepsilon}{2}$ \text{for all}  $x\in B_X$.
As $(g_n)$ is $\text{\rm w}^\ast$-null, $|g_n|(u)\to 0$.
So, there is $n_\varepsilon$ such that $|g_n|(u)\le\frac{\varepsilon}{2}$
for all $n\ge n_\varepsilon$. Thus, if $x\in B_X$ then
$$
   |T'g_n(x)|= |g_n(Tx)|\le 
   |g_n| (|Tx|)\le |g_n| (|Tx|-u)^+ + |g_n|(|Tx|\wedge u)\le
$$
$$
   \|g_n\|\|(|Tx|-u)^+\| + |g_n|(u) 
   \le \varepsilon \ \
   \quad (\forall n \ge n_\varepsilon).
$$
Then $(T' g_n)$ in norm null, and hence it is ${\text{\rm w}^\ast}$-null.
\end{proof}
\noindent
Since LW-, and positive MW-operators are all semi-compact 
(\cite[Thm.5.71 and Thm.5.72]{AlBu}, \cite[3.6.14]{Mey}),
we obtain the following.

\begin{corollary}\label{MW and LW are a-G}
$\text{\rm LW}(E,F)\cup\text{\rm r-MW}(E,F)\subseteq\text{\rm a-G}(E,F)$.
\end{corollary}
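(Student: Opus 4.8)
The plan is to reduce Corollary~\ref{MW and LW are a-G} to Proposition~\ref{d-prop implies semi is a-G} by showing that every operator in the set $\text{\rm LW}(E,F)\cup\text{\rm r-MW}(E,F)$ is semi-compact, and then to supply the missing hypothesis $F\in\text{\rm (d)}$ that Proposition~\ref{d-prop implies semi is a-G} requires. The key observation is that the parenthetical citations in the sentence immediately preceding the corollary already tell us that LW-operators and positive MW-operators are semi-compact (\cite[Thm.5.71, Thm.5.72]{AlBu}, \cite[3.6.14]{Mey}). So the first step is to handle the two pieces of the union separately: an arbitrary $T\in\text{\rm LW}(E,F)$ is semi-compact, and an arbitrary $T\in\text{\rm r-MW}(E,F)$ is a difference $T=T_1-T_2$ of \emph{positive} MW-operators by Definition~\ref{rP-operators}, each of which is semi-compact, so $T$ is semi-compact because the semi-compact operators form a vector space (a sum of almost order bounded sets is almost order bounded). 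Thus $\text{\rm LW}(E,F)\cup\text{\rm r-MW}(E,F)\subseteq\text{\rm semi-K}(E,F)$.

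The second step is to invoke Proposition~\ref{d-prop implies semi is a-G} with $X=E$, which gives $\text{\rm semi-K}(E,F)\subseteq\text{\rm a-G}(E,F)$, but only under the standing assumption $F\in\text{\rm (d)}$. Chaining the two inclusions then yields the corollary. Concretely I would write: if $T\in\text{\rm LW}(E,F)$ then $T$ is semi-compact by \cite[Thm.5.71]{AlBu}; if $T\in\text{\rm r-MW}(E,F)$, write $T=T_1-T_2$ with $T_1,T_2\in\text{\rm MW}_+(E,F)$, each semi-compact by \cite[Thm.5.72]{AlBu} (or \cite[3.6.14]{Mey}), hence $T$ is semi-compact; in either case Proposition~\ref{d-prop implies semi is a-G} finishes the argument.

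The main obstacle I anticipate is the hypothesis $F\in\text{\rm (d)}$, which is silently present in Proposition~\ref{d-prop implies semi is a-G} but absent from the statement of the corollary. If the corollary is genuinely meant to hold for arbitrary $F$, then one cannot route the proof through semi-compactness plus Proposition~\ref{d-prop implies semi is a-G}; instead I would prove the two inclusions directly. For $T\in\text{\rm LW}(E,F)$, the operator is $\text{\rm w}$-compact (LW-sets are relatively $\text{\rm w}$-compact, as noted after Definition~\ref{LWC-subsets}), hence Grothendieck by the remark $\text{\rm W}(X,Y)\subseteq\text{\rm G}(X,Y)$ in Section~4, and Grothendieck operators are \text{\rm a-G}. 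For $T\in\text{\rm r-MW}(E,F)$, again $T=T_1-T_2$ with positive $T_i\in\text{\rm MW}$; since MW-operators are $\text{\rm w}$-compact (Definition~\ref{Main MW operator} and the discussion noting LW- and MW-operators are $\text{\rm w}$-compact), each $T_i$ is Grothendieck, hence \text{\rm a-G}, and \text{\rm a-G} operators form a vector space, so $T$ is \text{\rm a-G}.

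This second, direct route is the safer one because it avoids both the $F\in\text{\rm (d)}$ issue and the passage through semi-compactness (which for the KB/o-continuity-free setting is not available). The only fact I must verify along the way is that $\text{\rm a-G}(E,F)$ is closed under differences: if $T'$ and $S'$ each send disjoint $\text{\rm w}^\ast$-null sequences of $F'$ to $\text{\rm w}$-null sequences of $E'$, then so does $(T-S)'=T'-S'$, since the $\text{\rm w}$-null sequences of $E'$ form a linear subspace. With that linearity in hand the corollary follows immediately from the already-established facts that $\text{\rm W}\subseteq\text{\rm G}\subseteq\text{\rm a-G}$ and that both LW- and MW-operators are weakly compact.
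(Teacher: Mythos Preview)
Your first route---reducing to semi-compactness and then invoking Proposition~\ref{d-prop implies semi is a-G}---is precisely the paper's argument: the sentence immediately preceding the corollary (``Since LW-, and positive MW-operators are all semi-compact \ldots, we obtain the following'') is the entire proof the paper offers. You are also right that this route requires $F\in\text{\rm (d)}$, a hypothesis that is present in Proposition~\ref{d-prop implies semi is a-G} but missing from the statement of the corollary; the paper appears either to be silently carrying that assumption over or to have overlooked it.

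Your second route---observing that LW- and MW-operators are weakly compact, hence Grothendieck, hence \text{\rm a-G}---is correct for \emph{arbitrary} $F$ and in fact the paper itself makes exactly this remark a few lines earlier, right after Definition~\ref{Main Grothendieck operators}: ``Since LW- and MW-operators \ldots\ are \text{\rm w}-compact, they are Grothendieck and hence \text{\rm a-G}.'' So your alternative argument is already in the paper and actually yields the stronger inclusion $\text{\rm LW}(E,F)\cup\text{\rm MW}(E,F)\subseteq\text{\rm a-G}(E,F)$, with no need to restrict to \text{\rm r-MW} or to decompose into positive summands. The corollary, read as a consequence of Proposition~\ref{d-prop implies semi is a-G}, is therefore weaker than what the paper has already noted, and its only purpose is to illustrate the semi-compactness route under the \text{\rm (d)} hypothesis.
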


\begin{theorem}\label{positive a-LW is a-G}
Let $E'$ be a KB-space and $F\in\text{\rm (d)}$. Then\\
$\text{\rm r-a-LW}(E,F)\subseteq \text{\rm r-a-G}(E,F)$.
\end{theorem}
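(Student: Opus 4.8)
The plan is to reduce the inclusion to a statement about positive operators and then route that statement through the $\text{M}$-weakly compact operators, for which the almost Grothendieck property has already been secured. Both $\text{r-a-LW}(E,F)$ and $\text{r-a-G}(E,F)$ are, by definition, the sets of differences of their positive members, so it suffices to prove the pointwise inclusion on positive operators: every $0\le T\in\text{a-LW}(E,F)$, under the standing hypotheses that $E'$ is a KB-space and $F\in\text{(d)}$, lies in $\text{a-G}(E,F)$. Granting this, an arbitrary $T\in\text{r-a-LW}(E,F)$ can be written as $T=T_1-T_2$ with $T_1,T_2\in\text{a-LW}(E,F)\cap\text{L}_+(E,F)$; the positive case gives $T_1,T_2\in\text{a-G}(E,F)\cap\text{L}_+(E,F)$, whence $T=T_1-T_2\in\text{r-a-G}(E,F)$.

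For the positive case I would first note that a positive operator is automatically order bounded, since $T[-x,x]\subseteq[-Tx,Tx]$ for all $x\in E_+$. Because $E'$ is a KB-space, its norm is $\text{o}$-continuous by Assertion~\ref{E' is o-cont}. I would then invoke \cite[Thm.4]{EAS} (the same result used in the proof of Proposition~\ref{a-LWo has o-cont norm}): when the norm of $E'$ is $\text{o}$-continuous, every order bounded $\text{a-LW}$-operator from $E$ to $F$ is $\text{M}$-weakly compact. Applied to our order bounded $T$, this yields $T\in\text{MW}(E,F)$.

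The final step passes from $\text{M}$-weak compactness to the almost Grothendieck property, and this is where the assumption $F\in\text{(d)}$ is spent. Since $T$ is a positive $\text{MW}$-operator, it is semi-compact (\cite[Thm.5.72]{AlBu}, \cite[3.6.14]{Mey}), so Proposition~\ref{d-prop implies semi is a-G} gives $T\in\text{a-G}(E,F)$; equivalently, one may cite Corollary~\ref{MW and LW are a-G} directly, observing that $T\in\text{MW}_+(E,F)\subseteq\text{r-MW}(E,F)\subseteq\text{a-G}(E,F)$. This settles the positive case and hence the theorem.

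I expect the only genuine content, and the step most deserving of care, to be the implication ``$E'$ has $\text{o}$-continuous norm and $T$ is order bounded $\text{a-LW}$ $\Longrightarrow$ $T\in\text{MW}(E,F)$'', i.e. the correct application of \cite[Thm.4]{EAS}; everything else is bookkeeping with the regular (difference-of-positives) structure together with the already-proved ``semi-compact $\Rightarrow$ $\text{a-G}$'' result. Two secondary points I would make explicit are that positivity supplies the order boundedness needed to apply the $\text{EAS}$ theorem, and that the zero operator counts as a positive $\text{MW}$-operator, so that the reduction $\text{MW}_+(E,F)\subseteq\text{r-MW}(E,F)$ is legitimate.
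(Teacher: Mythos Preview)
Your argument is correct, but it follows a genuinely different route from the paper. The paper works directly: for a positive $T\in\text{a-LW}(E,F)$ and a disjoint $\text{w}^\ast$-null sequence $(f_n)$ in $F'$, it shows $\|T'f_n\|\to 0$ by verifying the two conditions of Assertion~\ref{Dodds-Fremlin}(ii). Property~(d) gives that $(|f_n|)$ is $\text{w}^\ast$-null, positivity and $\text{w}^\ast$-continuity of $T'$ then force $(|T'f_n|)$ to be $\text{w}^\ast$-null, and the o-continuity of the norm in $E'$ turns disjoint bounded sequences $(x_n)$ in $E_+$ into w-null ones, so that the a-LW hypothesis (via \cite[Thm.2.2]{BLM1}) yields $T'f_n(x_n)=f_n(Tx_n)\to 0$.

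You instead absorb the hard work into \cite[Thm.4]{EAS}: the o-continuity of the norm in $E'$ upgrades the order bounded a-LW operator $T$ to an MW-operator, after which the already-established implications (positive MW $\Rightarrow$ semi-compact $\Rightarrow$ a-G under~(d), or simply MW $\Rightarrow$ weakly compact $\Rightarrow$ Grothendieck $\Rightarrow$ a-G) finish the job. Your route is shorter and, interestingly, the second variant via weak compactness does not use $F\in\text{(d)}$ at all; so your argument in fact shows the hypothesis on $F$ is redundant. The paper's approach, by contrast, is self-contained and does not invoke the external EAS result, but it genuinely spends the property~(d) in the Dodds--Fremlin step.
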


\begin{proof}
It suffices to prove that each positive \text{\rm a-LW}-operator 
from $E$ to $F$ is \text{\rm a-G}. 
Let $0\le T\in\text{\rm a-LW}(E,F)$.
Then $T$ is continuous. 
In view of Definition \ref{Main Grothendieck operators}~b), 
we need to show that $\|T'f_n\|\to 0$ for each disjoint 
$\text{\rm w}^\ast$-null sequence $(f_n)$ in $F'$.
Let $(f_n)$ be disjoint $\text{\rm w}^\ast$-null in $F'$.
As $F\in\text{\rm (d)}$, $(|f_n|)$ is $\text{\rm w}^\ast$-null in $F'$. 
Since $T'$ is norm continuous, 
$T'$ is continuous when $F'$ and $E'$ are equipped 
with $\text{\rm w}^\ast$-topologies.
Thus, $(T'|f_n|)$ is $\text{\rm w}^\ast$-null in $E'$.
It follows from $|T'f_n|\le T'|f_n|$  that  
$(|T'f_n|)$ is $\text{\rm w}^\ast$-null in $E'$. 
By Assertion~\ref{Dodds-Fremlin}(ii), in order to show $\|T'f_n\|\to 0$, 
we need to check that $T'f_n(x_n)\to 0$ 
for each  disjoint bounded sequence $(x_n)$ in $E_+$.
Let $(x_n)$ be a disjoint bounded sequence in $E_+$. 
Since the norm in $E'$ is $\text{\rm o}$-continuous, 
$(x_n)$ is \text{\rm w}-null by Assertion \ref{E' is o-cont}.
Since $T$ is a-LW then $T'f_n(x_n)=f_n(Tx_n)\to 0$ e.g., 
by \cite[Thm.2.2]{BLM1}. Thus $\|T'f_n\|\to 0$ and 
hence $T$ is \text{\rm a-G}.
\end{proof}
\noindent
Since each Banach lattice with $\text{\rm o}$-continuous norm
is Dedekind complete, and hence has the property \text{\rm (d)}
by \cite[Prop.1.4]{Wnuk3}, we obtain the following.
\begin{corollary}\label{cor positive a-LW is a-G}
If both $E'$ and $F$ have $\text{\rm o}$-continuous norms then\\
$\text{\rm r-a-LW}(E,F)\subseteq \text{\rm r-a-G}(E,F)$.
\end{corollary}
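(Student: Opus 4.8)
The plan is to deduce Corollary \ref{cor positive a-LW is a-G} directly from Theorem \ref{positive a-LW is a-G} by verifying that its two hypotheses are satisfied under the stronger assumptions of the corollary. Theorem \ref{positive a-LW is a-G} requires that $E'$ be a KB-space and that $F\in\text{\rm (d)}$, and concludes $\text{\rm r-a-LW}(E,F)\subseteq\text{\rm r-a-G}(E,F)$. The corollary assumes instead that both $E'$ and $F$ have $\text{\rm o}$-continuous norms, so the only task is to show that these two norm conditions imply the two hypotheses of the theorem.

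First I would handle the condition on $E'$. Since the norm on $E'$ is $\text{\rm o}$-continuous, Assertion \ref{E' is o-cont} (the equivalence of (i) and (ii)) gives immediately that $E'$ is a KB-space. This is the first hypothesis of Theorem \ref{positive a-LW is a-G}.

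Next I would handle the condition on $F$. Since the norm on $F$ is $\text{\rm o}$-continuous, $F$ is Dedekind complete; and by \cite[Prop.1.4]{Wnuk3} every Dedekind $\sigma$-complete Banach lattice has the property \text{\rm (d)}, so in particular $F\in\text{\rm (d)}$. (Indeed this chain of implications is exactly the remark stated in the excerpt immediately preceding the corollary.) This supplies the second hypothesis.

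With both hypotheses of Theorem \ref{positive a-LW is a-G} now verified, the conclusion $\text{\rm r-a-LW}(E,F)\subseteq\text{\rm r-a-G}(E,F)$ follows at once by applying that theorem. There is no genuine obstacle here: the corollary is a pure specialization, and the entire content lies in recognizing that $\text{\rm o}$-continuity of the norm on $E'$ yields the KB-space property via Assertion \ref{E' is o-cont}, while $\text{\rm o}$-continuity of the norm on $F$ yields Dedekind completeness and hence property \text{\rm (d)}. The only point worth stating carefully is that $\text{\rm o}$-continuity of the norm implies Dedekind completeness, which is standard, so the proof is essentially a two-line invocation of the preceding theorem.
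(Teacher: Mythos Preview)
Your proposal is correct and follows exactly the paper's own argument: the sentence preceding the corollary reduces the hypothesis on $F$ to property~\text{\rm (d)} via Dedekind completeness and \cite[Prop.1.4]{Wnuk3}, while the passage from $\text{\rm o}$-continuity of the norm on $E'$ to $E'$ being a KB-space is the equivalence (i)$\Leftrightarrow$(ii) of Assertion~\ref{E' is o-cont}, after which Theorem~\ref{positive a-LW is a-G} applies directly.
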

\noindent
Now, we give necessary and sufficient conditions on $F$ under which 
every continuous $T:X\to F$ is almost Grothendieck.

\begin{theorem}\label{DGP vs a-G} 
{\rm 
Let $F$ be a Banach lattice. The following are equivalent. 
\begin{enumerate}[i)] 
\item 
$F\in\text{\rm (DGP)}$.
\item 
The identity operator $I_F$ on $F$ is \text{\rm a-G}.
\item
$\text{\rm a-G}(X,F)=\text{\rm L}(X,F)$ for every $X$.
\item
$\text{\rm a-G}(F)=\text{\rm L}(F)$.
\end{enumerate}}
\end{theorem}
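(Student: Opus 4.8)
The plan is to follow the same scheme as in Theorem~\ref{dual Shur vs almost limited}, exploiting that the notion of an \text{\rm a-G} operator is phrased entirely through the adjoint. First I would record that i)$\Longleftrightarrow$ii) is essentially a tautology: since $(I_F)'=I_{F'}$ is the identity on $F'$, the statement ``$I_F$ is \text{\rm a-G}'' unravels to ``every disjoint $\text{\rm w}^\ast$-null sequence in $F'$ is $\text{\rm w}$-null'', which is precisely the definition of $F\in\text{\rm (DGP)}$. Thus both directions between i) and ii) come for free.

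I would then close the cycle i)$\Longrightarrow$iii)$\Longrightarrow$iv)$\Longrightarrow$ii), of which only the first implication carries content. For i)$\Longrightarrow$iii) it suffices to show $\text{\rm L}(X,F)\subseteq\text{\rm a-G}(X,F)$, the reverse inclusion being built into the definition of \text{\rm a-G}. So let $T\in\text{\rm L}(X,F)$ and let $(f_n)$ be a disjoint $\text{\rm w}^\ast$-null sequence in $F'$. Applying $F\in\text{\rm (DGP)}$ turns $(f_n)$ into a $\text{\rm w}$-null sequence in $F'$, and the remaining task is to transport this through $T'$. The implications iii)$\Longrightarrow$iv) (specialize to $X=F$) and iv)$\Longrightarrow$ii) (apply the hypothesis to $I_F\in\text{\rm L}(F)$) are immediate.

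The one genuine step is to see that $(T'f_n)$ is $\text{\rm w}$-null in $X'$ once $(f_n)$ is $\text{\rm w}$-null in $F'$. I would justify this through the double adjoint: for any $g\in X''$ one has $g(T'f_n)=(T''g)(f_n)$, and since $T''g\in F''$ while $(f_n)$ is $\text{\rm w}$-null in $F'$, the right-hand side tends to $0$; hence $(T'f_n)$ is $\text{\rm w}$-null and $T$ is \text{\rm a-G}. In other words, the only ingredient used beyond the defining property \text{\rm (DGP)} is the standard fact that the adjoint of a bounded operator is weak-to-weak continuous and so preserves weakly null sequences. I expect this, rather than any combinatorial or lattice-theoretic difficulty, to be the single point requiring care, and it is routine; everything else is formal.
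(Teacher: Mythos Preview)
Your proposal is correct and follows essentially the same scheme as the paper: the equivalence i)$\Longleftrightarrow$ii) is tautological, the implications iii)$\Longrightarrow$iv)$\Longrightarrow$ii) are formal, and the only substantive step is pushing a disjoint $\text{\rm w}^\ast$-null sequence $(f_n)$ through $T'$ after using (DGP) to upgrade it to $\text{\rm w}$-null. The paper phrases this last step as ``$T'$ is \text{\rm w}-continuous'' (with a small typo in the text), which is exactly your double-adjoint computation $g(T'f_n)=(T''g)(f_n)\to 0$; so your argument and the paper's are the same in content, with yours being slightly more explicit.
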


\begin{proof} 
i)$\Longleftrightarrow$ii)
Let $(f_n)$ be disjoint \text{\rm w}$^\ast$-null in $F'$.
The condition $I_F\in\text{\rm a-G}(F)$ means 
$((I_F)'(f_n))=(f_n)$ is \text{\rm w}-null, which 
it is equivalent to $F\in\text{\rm (DGP)}$. 

ii)$\Longrightarrow$iii)
Let $T\in\text{\rm L}(X,F)$ and let $(f_n)$ be 
disjoint \text{\rm w}$^\ast$-null in $F'$.
Since $I_F$ is \text{\rm a-G},
$((I_F)'(f_n))=(f_n)$ is \text{\rm w}-null.
As $T$ is \text{\rm w}-continuous,
$(Tf_n)$ is \text{\rm w}-null, and hence 
$T\in\text{\rm a-G}(X,F)$.

iii)$\Longrightarrow$iv)$\Longrightarrow$ii)
It is obvious.
\end{proof}

\subsection{Regularly (almost) Grothendieck operators.}
We continue with the following lemma.

\begin{lemma}\label{a-G are closed}
If $\text{\rm a-G}(X,F)\ni T_k\stackrel{\|\cdot\|}{\to}T\in\text{\rm L}(X,F)$
then $T\in\text{\rm a-G}(X,F)$.
\end{lemma}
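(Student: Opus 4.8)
The plan is to show that the class $\text{\rm a-G}(X,F)$ is closed in the operator norm by the same approximation scheme used in Lemma~\ref{alimited are closed}, adapted to the fact that the target condition is now $\text{\rm w}$-nullity rather than norm-nullity. First I would fix a disjoint $\text{\rm w}^\ast$-null sequence $(f_n)$ in $F'$; being $\text{\rm w}^\ast$-convergent it is bounded, say $\|f_n\|\le M$ for all $n$. The goal is then to prove that $(T'f_n)$ is $\text{\rm w}$-null in $X'$, i.e. that $\phi(T'f_n)\to 0$ for every $\phi\in X''$.

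Before the main estimate I would record the elementary fact that taking adjoints is an isometry, so $\|T_k'-T'\|=\|T_k-T\|\to 0$, and consequently $\|(T'-T_k')f_n\|\le\|T-T_k\|\,M$ for every $n$ and $k$. This furnishes a perturbation estimate that is uniform in $n$, which is what makes the argument go through.

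Next, fixing $\phi\in X''$ and $k\in\mathbb{N}$, I would split
\begin{equation*}
  |\phi(T'f_n)|\le|\phi(T_k'f_n)|+|\phi((T'-T_k')f_n)|
  \le|\phi(T_k'f_n)|+\|\phi\|\,\|T-T_k\|\,M.
\end{equation*}
Since $T_k\in\text{\rm a-G}(X,F)$, the sequence $(T_k'f_n)_n$ is $\text{\rm w}$-null, so $|\phi(T_k'f_n)|\to 0$ as $n\to\infty$ for this fixed $k$. Taking $\limsup_n$ in the displayed inequality therefore yields $\limsup_n|\phi(T'f_n)|\le\|\phi\|\,\|T-T_k\|\,M$. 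As $k$ is arbitrary and $\|T-T_k\|\to 0$, letting $k\to\infty$ forces $\limsup_n|\phi(T'f_n)|=0$, hence $\phi(T'f_n)\to 0$. Since $\phi\in X''$ was arbitrary, $(T'f_n)$ is $\text{\rm w}$-null and $T\in\text{\rm a-G}(X,F)$.

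The step I expect to require the most care is the interchange of the two limits. Unlike Lemma~\ref{alimited are closed}, where the output is norm-null and a single triangle inequality closes the argument uniformly, here the output is only $\text{\rm w}$-null, so one cannot estimate $\|T'f_n\|$ directly. The resolution is precisely that the perturbation term $(T'-T_k')f_n$ is norm-small \emph{uniformly in} $n$; testing against a fixed $\phi$ then makes it weakly small uniformly in $n$, which legitimises first taking $\limsup_n$ (killing the $T_k'$ contribution for each fixed $k$) and only afterwards $k\to\infty$ (killing the perturbation).
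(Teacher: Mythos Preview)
Your proof is correct and follows essentially the same approach as the paper: fix $\phi\in X''$, split $|\phi(T'f_n)|$ into the $T_k'$-contribution (which vanishes since $T_k\in\text{\rm a-G}(X,F)$) plus the perturbation $|\phi((T'-T_k')f_n)|\le\|\phi\|\,\|T-T_k\|\,M$, and use that the latter is small uniformly in $n$. The only cosmetic difference is that you phrase the double-limit passage via $\limsup_n$ followed by $k\to\infty$, whereas the paper uses an equivalent $\varepsilon$-formulation.
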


\begin{proof}
Let $(f_n)$ be disjoint $\text{\rm w}^\ast$-null in $F'$.
We need to show that $(T'f_n)$ is \text{\rm w}-null in $X'$. 
So, take any $g\in F''$. Let $\varepsilon>0$.
Pick a $k$ with $\|T'_k-T'\|\le\varepsilon$. Since
$T_k\in\text{\rm a-G}(X,F)$, there exists $n_0$ such that
$|g(T_k'f_n)|\le\varepsilon$ whenever $n\ge n_0$.
Note that $\|f_n\|\le M$ for some $M\in\mathbb{R}$
and for all $n\in\mathbb{N}$. Since $\varepsilon>0$
is arbitrary, it follows from
$$
   |g(T'f_n)|\le|g(T_k'f_n-T'f_n)|+|g(T'f_n)|\le
   \|g\|\|T_k'-T'\|\|f_n\|+\varepsilon\le(\|g\|M+1)\varepsilon
$$
for $n\ge n_0$, that $g(T'f_n)\to 0$. Since $g\in F''$ is arbitrary,
$T\in\text{\rm a-G}(X,F)$.
\end{proof}
\noindent
The next result follows from Theorem \ref{P-norm} and Lemma \ref{a-G are closed}.

\begin{theorem}\label{P-norm in a-G}
For arbitrary Banach lattices $E$ and $F$, 
$\text{\rm r-a-G}(E,F)$ is a Banach space 
under the enveloping norm.
\end{theorem}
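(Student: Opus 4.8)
The plan is to recognize this as a direct instance of Theorem~\ref{P-norm}, which asserts that $\text{\rm r-}{\cal P}(E,F)$ is complete under the enveloping norm as soon as ${\cal P}$ is a vector subspace of $\text{\rm L}(E,F)$ that is closed in the operator norm. Accordingly, I would take ${\cal P}:=\text{\rm a-G}(E,F)$ and simply verify these two structural hypotheses, after which the conclusion is automatic.

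First I would confirm that $\text{\rm a-G}(E,F)$ is a vector subspace of $\text{\rm L}(E,F)$. This is immediate from Definition~\ref{Main Grothendieck operators}~b): if $S,T\in\text{\rm a-G}(E,F)$ and $(f_n)$ is any disjoint $\text{\rm w}^\ast$-null sequence in $F'$, then $(S'f_n)$ and $(T'f_n)$ are $\text{\rm w}$-null in $E'$, whence $((\lambda S+\mu T)'f_n)=(\lambda S'f_n+\mu T'f_n)$ is again $\text{\rm w}$-null for all scalars $\lambda,\mu$, since the $\text{\rm w}$-null sequences are closed under linear combinations. Thus $\lambda S+\mu T\in\text{\rm a-G}(E,F)$, and the subspace requirement holds.

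Second, the norm-closedness of $\text{\rm a-G}(E,F)$ inside $\text{\rm L}(E,F)$ is precisely the content of Lemma~\ref{a-G are closed}, which shows that an operator-norm limit of almost Grothendieck operators is again almost Grothendieck. With both hypotheses verified, Theorem~\ref{P-norm} applied to ${\cal P}=\text{\rm a-G}(E,F)$ delivers that $\text{\rm r-a-G}(E,F)$ is a Banach space under the enveloping norm, completing the argument. I expect no genuine obstacle here: the whole proof reduces to checking that the abstract completeness framework of Section~2 applies, the only substantive ingredient being the operator-norm closedness already established in Lemma~\ref{a-G are closed}.
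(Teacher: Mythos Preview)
Your proposal is correct and follows exactly the paper's own approach: the paper simply states that the result follows from Theorem~\ref{P-norm} together with Lemma~\ref{a-G are closed}. Your explicit verification that $\text{\rm a-G}(E,F)$ is a linear subspace is a harmless (and arguably welcome) elaboration of what the paper leaves implicit.
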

\noindent
Similarly, it can be shown that
$\text{\rm r-G}(E,F)$ is a Banach space 
under its enveloping norm.

%%%%%%%%%%%%%%%%%%%%
\section{Almost Dunford--Pettis operators}
%%%%%%%%%%%%%%%%%%%%

In this section, we investigate several modifications of Dunford--Pettis opeators
(cf. \cite{San,AqBo,AE,BLM1,CCJ,BLM2,AEW}) and related topics.

\subsection{$\text{DP}^\ast$-properties.} 
Recall that a Banach space $X$ has 
\begin{enumerate}[1)]
\item
the {\em Dunford--Pettis property} (or, $X\in\text{\rm (DPP)}$)
if $f_n(x_n)\to 0$ for each \text{\rm w}-null $(f_n)$ in $X'$
and each \text{\rm w}-null $(x_n)$ in $X$;
\item
the {\em Dunford--Pettis$^\ast$ property} 
(or, $X\in\text{\rm (DP}^\ast\text{\rm P)}$) if $f_n(x_n)\to 0$ 
for each $\text{\rm w}^\ast$-null $(f_n)$ in $X'$ and
each \text{\rm w}-null $(x_n)$ in $X$.
\end{enumerate}
\noindent
Obviously, $\text{\rm (DP}^\ast\text{\rm P)}\Longrightarrow\text{\rm (DPP)}$. 
The following property is often taking as 
the definition of $\text{\rm (DP}^\ast\text{\rm P)}$:
\begin{enumerate}[\text{2'})]
\item 
$X\in\text{\rm (DP}^\ast\text{\rm P)}$ iff 
all relatively \text{\rm w}-compact subsets of $X$ are limited.
\end{enumerate}
Combinig \text{2'}) with \text{d}) of Definition \ref{Main Schur property} gives 
$$
   X\in\text{\rm (DP}^\ast\text{\rm P)}\cup\text{\rm (GPP)}
  \Longleftrightarrow\dim(X)<\infty.
$$ 

\begin{definition}\label{Dunford--Pettis properties}
{\em A Banach lattice $E$ has 
\begin{enumerate}[a)]
\item
the {\em almost Dunford--Pettis$^\ast$ property} 
(or, $E\in\text{\rm (a-DP}^\ast\text{\rm P)}$) if
$f_n(x_n)\to 0$ for each $\text{\rm w}^\ast$-null $(f_n)$ in $E'_+$
and each disjoint \text{\rm w}-null $(x_n)$ in $E$;
\item
the {\em weak Dunford--Pettis$^\ast$ property} 
(or, $E\in\text{\rm (wDP}^\ast\text{\rm P)}$)
if all relatively \text{\rm w}-compact subsets of $E$ are \text{\rm a}-limited
\cite[Def.3.1]{CCJ}.
\end{enumerate}}
\end{definition}
\noindent
For example, $\ell^1\in\text{\rm (wDP}^\ast\text{\rm P)}$.
It should be clear that 
$\text{\rm (bi-sP)}\Rightarrow\text{\rm (a-DP}^\ast\text{\rm P)}$.
By Assertion \ref{disj w-null is mod w-null},
$E\in\text{\rm (a-DP}^\ast\text{\rm P)}$ iff
$f_n(x_n)\to 0$ for each $\text{\rm w}^\ast$-null $(f_n)$ in $E'_+$
and each disjoint \text{\rm w}-null $(x_n)$ in $E_+$.
We include a proof of the following characterization 
of $\text{\rm (a-DP}^\ast\text{\rm P)}$
as we did not find appropriate references. 

\begin{assertion}\label{X has Dunford--Pettis* property}
The following holds.
\begin{enumerate}[\em (i)]
\item 
$X\in\text{\rm (DP}^\ast\text{\rm P)}$
iff $\lim\limits_{n\to\infty}f_n(x_n)=0$ 
for each $\text{\rm w}^\ast$-convergent $(f_n)$ in $X'$
and each {\em w}-null $(x_n)$ in $X$$;$
\item 
$E\in\text{\rm (a-DP}^\ast\text{\rm P)}$
iff $\lim\limits_{n\to\infty}f_n(x_n)=0$ 
for each $\text{\rm w}^\ast$-convergent $(f_n)$ in $E'$
and each disjoint \text{\rm w}-null $(x_n)$ in $E$.
\end{enumerate}
\end{assertion}

\begin{proof}
(i) The sufficiency is trivial. For the necessity, 
let $x_n\stackrel{\text{\rm w}}{\to}0$ and 
$f_n \stackrel{\text{\rm w}^\ast}{\to}f$. 
Since $(f_n -f)\stackrel{\text{\rm w}^\ast}{\to}0$, and 
hence $(f_n-f)(x_n)\to 0$, and as $f(x_n)\to 0$, 
we obtain $f_n(x_n)\to 0$.

(ii) is similar to (i).
\end{proof}

\subsection{(Weak) Dunford--Pettis operators.}  
We begin with the following.

\begin{definition}\label{def of DP operators} 
{\em An operator $T:X\to Y$ is called:
\begin{enumerate}[a)]
\item {\em Dunford--Pettis} or {\em completely continuous} 
(shortly, $T$ is \text{\rm DP}) 
if $T$ takes \text{\rm w}-null sequences to 
norm null ones (cf. \cite[p.340]{AlBu}).
\item {\em weak Dun\-ford--Pet\-tis} (shortly, $T$ is \text{\rm wDP})
if $f_n(Tx_n)\to 0$ whenever $(f_n)$ is \text{\rm w}-null in $Y'$ and 
$(x_n)$ is \text{\rm w}-null in $X$ \cite[p.349]{AlBu}.
\end{enumerate}}
\end{definition}
\noindent
Looking at an operator form the point of view of
redistributing a property of a space between the doman and 
the range of the operator as in \cite{AEG2}, 
we observe that 
\begin{enumerate}[1)]
\item 
\text{\rm DP}-opeators appear via the Schur property 
and equially deserve the name of 
\text{\rm (SP)}-operators, whereas
\item 
\text{\rm wDP}-opeators appear via the next property: $X\in\text{\rm (wDP)}$ if
$f_n(x_n)\to 0$ for all \text{\rm w}-null $(f_n)$ in $X'$ and 
all \text{\rm w}-null $(x_n)$ in $X$.
\end{enumerate}
\noindent
Clearly, $\text{\rm K}(X,Y)\subseteq\text{\rm DP}(X,Y) 
\subseteq\text{\rm wDP}(X,Y)\subseteq \text{\rm L}(X,Y)$.
By \cite[Thm.5.79]{AlBu},
$T$ is \text{\rm DP} iff $T$ takes \text{\rm w}-Cauchy sequences 
to norm convergent sequences.
By the Rosenthal theorem (cf. \cite[Thm.4.72]{AlBu}), 
any bounded sequence $(x_n)$ in $X$ 
has a subsequence $(x_{n_k})$ which is either \text{\rm w}-Cauchy or, 
alternatively, is equivalent 
to the standard basis of $\ell^1$. 
It follows that {\em each \text{\rm DP}-operator 
$X\stackrel{T}{\to}Y$ is compact
if $\ell^1$ does not embed in $X$} \cite[Thm.5.80]{AlBu}.
The identity operator $I:\ell^1\to\ell^1$ is \text{\rm DP} 
but its adjoint $I:\ell^\infty\to\ell^\infty$ is not \text{\rm DP}.
It is well known that  the operator $L^1[0,1]\stackrel{T}{\to}\ell^\infty$, 
$Tf=\left(\int_0^1 f(t)r^+_k(t)\,dt \right)_{k=1}^\infty$ is \text{\rm wDP} 
yet not \text{\rm DP}, as $r_n\stackrel{\text{\rm w}}{\to}0$ and 
$\|Tr_n\|\ge \int_0^1 r_n(t)r^+_n(t)\,dt\equiv\frac{1}{2}$.
An operator $X\stackrel{T}{\to} Y$ is \text{\rm wDP} 
iff $S\circ T$ is \text{\rm DP} 
for every \text{\rm w}-compact operator $S$ from $Y$ to an arbitrary 
$Z$ (cf. \cite[Thm.5.81,Thm.5.99]{AlBu}).

\subsection{Almost (weak) Dunford--Pettis operators.}

\begin{definition}\label{Main def of a-DP operators} 
{\em An operator $T:E\to Y$ is called$:$
\begin{enumerate}[a)]
\item 
{\em almost Dunford--Pettis} (shortly, $T$ is \text{\rm a-DP}) 
if $T$ takes disjoint \text{\rm w}-null sequences to norm null ones \cite{San,Wnuk2};
\item 
{\em almost weak Dun\-ford--Pet\-tis} (shortly, $T$ is \text{\rm a-wDP}) 
if $f_n(Tx_n) \to 0$ whenever $(f_n)$ is \text{\rm w}-null in $Y'$ and 
$(x_n)$ is disjoint \text{\rm w}-null in $E$. 
\end{enumerate}}
\end{definition}
\noindent
It follows from Assertion \ref{disj w-null is mod w-null} (cf. also \cite[Lm.4.1]{AEW})
that $T\in\text{\rm a-DP}(E,Y)$ (resp. $T\in\text{\rm a-wDP}(E,Y)$) 
iff $T$ takes disjoint \text{\rm w}-null sequences of $E_+$ to norm null ones
(resp. $f_n(Tx_n) \to 0$ whenever $(f_n)$ is \text{\rm w}-null in $Y'$ and 
$(x_n)$ is disjoint \text{\rm w}-null in $E_+$). Like in the observation after 
Definition \ref{def of DP operators}, it is worth noting that
\begin{enumerate}[1)]
\item 
\text{\rm a-DP}-operators appear via an equivalent form of the positive Schur property 
and probably deserve also the name of \text{\rm (PSP)}-operators.
\item 
\text{\rm a-wDP}-operators appear via the following property
of Banach lattices: $E\in\text{\rm (a-wDP)}$ whenever
$f_n(x_n)\to 0$ for every \text{\rm w}-null $(f_n)$ in $E'$ and 
every disjoint \text{\rm w}-null $(x_n)$ in $E$. It should be also clear that
\text{\rm (bi-sP)}$\Rightarrow$\text{\rm (a-wDP)}.
\end{enumerate}
\noindent
Obviously, $\text{\rm DP}(X,Y)\subseteq\text{\rm wDP}(X,Y)$, 
$\text{\rm DP}(E,Y)\subseteq\text{\rm a-DP}(E,Y)$, and
$$
   \text{\rm DP}(E,F)\subseteq\text{\rm wDP}(E,F)\subseteq
   \text{\rm a-wDP}(E,F)\subseteq\text{\rm L}(E,F).
$$
By \cite[Thm.4.1]{AqBo}, 
$\text{\rm a-DP}(E,F)=\text{\rm DP}(E,F)$ for all $F$ 
iff the lattice operations in $E$ are sequentially \text{\rm w}-continuous.
The identity operator: 
\begin{enumerate}[]
\item
$I:L^1[0,1]\to L^1[0,1]$ is \text{\rm a-DP} yet not \text{\rm DP};
\item
$I:c\to c$ is \text{\rm a-wDP} yet neither \text{\rm wDP} nor \text{\rm a-DP}.
\end{enumerate}
\noindent
Observe that $\text{\rm DP}(E,X)\subseteq\text{\rm MW}(E,X)$ 
whenever the norm in $E'$ is \text{\rm o}-continu\-ous. 
Indeed, let $T\in\text{\rm DP}(E,X)$ and let 
$(x_n)$ be a disjoint bounded sequence in $E$. 
By Assertion \ref{E' is o-cont}, $(x_n)$ is \text{\rm w}-null. 
As $T$ is \text{\rm DP}, $\|Tx_n\|\to 0$ and hence $T\in\text{\rm MW}(E,X)$. 
Notice that the condition of \text{\rm o}-continuity for norm in $E'$ is essential here, 
as $I_{\ell^1}\in\text{\rm DP}(\ell^1)$ due to 
the Schur property in $\ell^1$, 
yet $I_{\ell^1}\not\in\text{\rm MW}(\ell^1)$ 
because the disjoint bounded 
sequence $(e_n)$ of unit vectors of $\ell^1$ is not norm null.
The domination property for \text{\rm a-DP}-operators 
was established in \cite[Cor.2.3]{AE}. The proofs of the 
next proposition is a straightforward 
modification of the proofs of the Kalton--Saab domination 
theorem (cf. \cite[Thm.5.101]{AlBu}.

\begin{proposition}\label{dominated by awDP is awDP}
Any positive operator dominated by an \text{\rm a-wDP}-operator 
is likewise an \text{\rm a-wDP}-operator.
\end{proposition}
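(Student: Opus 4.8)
The plan is to mirror the Kalton--Saab domination argument, reducing everything to positive disjoint sequences where the hypothesis $0\le S\le T$ can be used pointwise. So suppose $0\le S\le T$ in ${\cal L}(E,F)$ with $T\in\text{\rm a-wDP}(E,F)$, let $(f_n)$ be \text{\rm w}-null in $F'$, and let $(x_n)$ be disjoint \text{\rm w}-null in $E$. By the remark following Definition~\ref{Main def of a-DP operators} (which rests on Assertion~\ref{disj w-null is mod w-null}) it suffices to treat $(x_n)$ in $E_+$, and then $0\le Sx_n\le Tx_n$ for every $n$. The goal is $f_n(Sx_n)\to 0$, whereas all that $T$ supplies is $g_n(Tx_n)\to 0$ for \emph{every} \text{\rm w}-null $(g_n)$ in $F'$; note that, by disjointness of $(x_n)$, this already forces $\{Tx_n\}$ to be a Dunford--Pettis set (Assertion~\ref{limited and DP sets}).

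The naive estimate $|f_n(Sx_n)|\le|f_n|(Sx_n)\le|f_n|(Tx_n)=(T'|f_n|)(x_n)$ would finish at once, but it is useless in general: the lattice operations of $F'$ are not assumed sequentially \text{\rm w}-continuous, so $(|f_n|)$ need not be \text{\rm w}-null and the a-wDP property of $T$ cannot be applied to it. Sign-splitting $f_n=f_n^+-f_n^-$ fares no better, since the positive parts are not \text{\rm w}-null either. Indeed, arguing by contradiction with $|f_n(Sx_n)|\ge 2\varepsilon$ and using $f_n(Tx_n)\to 0$, one checks that \emph{both} $f_n^+(Tx_n)$ and $f_n^-(Tx_n)$ remain $\ge\varepsilon/2$ eventually, so no crude decomposition yields a \text{\rm w}-null test sequence that is large on $(Tx_n)$. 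Overcoming exactly this is the whole content of the Kalton--Saab device, and it is here that I expect the real work to lie.

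Following \cite[Thm.5.101]{AlBu}, the plan is to argue by contradiction and, after normalising $\|f_n\|,\|x_n\|\le 1$ and passing to a subsequence with $|f_n(Sx_n)|\ge\varepsilon$, to run a gliding-hump/exhaustion construction on $(f_n)$ producing an \emph{auxiliary} \text{\rm w}-null sequence $(g_n)$ in $F'$ for which $g_n(Tx_n)$ stays bounded away from $0$; the disjointness of $(x_n)$ is precisely what makes this extraction possible with no order-continuity hypothesis on $F$, and it contradicts $T\in\text{\rm a-wDP}(E,F)$. A conceptually cleaner alternative is the composition characterisation (the almost-analogue of \cite[Thm.5.99]{AlBu}): $T$ is a-wDP iff $R\circ T$ is \text{\rm a-DP} for every weakly compact $R$; for \emph{positive} $R$ one has $0\le RS\le RT$, whence the already-established domination for a-DP operators \cite[Cor.2.3]{AE} gives that $RS$ is a-DP. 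In either route the main obstacle is the same, namely passing from positive test functionals/operators to arbitrary ones, equivalently controlling the weakly non-compact behaviour of the positive parts $f_n^+$; the combination of $0\le Sx_n\le Tx_n$ with the disjointness of $(x_n)$ is what I expect to neutralise it.
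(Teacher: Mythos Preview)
Your plan coincides with the paper's: both defer to the Kalton--Saab device of \cite[Thm.5.101]{AlBu}, and the paper supplies no further detail than that single reference. You have correctly isolated the crux---one must control $|f_n|(Tx_n)$ although $(|f_n|)$ need not be \text{\rm w}-null---but the proposal stops precisely there: neither the gliding-hump paragraph nor the composition paragraph actually closes the gap, as you yourself acknowledge. The composition route in particular cannot be rescued by restricting to positive $R$: for the natural test operator $R:F\to c_0$, $Ry=(f_n(y))_n$, the formal modulus $y\mapsto(|f_n|(y))_n$ need not take values in $c_0$ (take $F=L^2[0,1]$, $f_n=r_n$ the Rademacher functions, $y$ the constant function $1$), so no positive weakly compact majorant is available to feed into \cite[Cor.2.3]{AE}.

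The direct route does work, and the missing ingredient is the quantitative form of Burkinshaw--Dodds (Assertion~\ref{Burkinshaw--Dodds} together with \cite[Thm.4.36]{AlBu}). You have already argued that $\{Tx_n:n\}$ is a \text{\rm DP}-set. Since $\{f_n:n\}$ is relatively \text{\rm w}-compact in $F'$, every disjoint sequence $(g_k)$ in $\text{\rm sol}\{f_n\}$ is \text{\rm w}-null by \cite[Thm.4.34]{AlBu}, hence so is $(|g_k|)$ by Assertion~\ref{disj w-null is mod w-null}, and therefore $(|g_k|)$ is uniformly null on the \text{\rm DP}-set $\{Tx_n\}$. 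Applying \cite[Thm.4.36]{AlBu} in the Banach lattice $F'$ (with the solid set $\text{\rm sol}\{f_n\}$ and the positive set $\{Tx_n\}\subseteq F''_+$) now yields, for each $\varepsilon>0$, some $g\in F'_+$ with $(|f_n|-g)^+(Tx_m)<\varepsilon$ for all $n,m$. Then $|f_n|(Tx_n)\le g(Tx_n)+\varepsilon$ and $g(Tx_n)=(T'g)(x_n)\to 0$ since $x_n\stackrel{\text{\rm w}}{\to}0$, so $\limsup_n|f_n|(Tx_n)\le\varepsilon$; as $\varepsilon>0$ is arbitrary, $|f_n(Sx_n)|\le|f_n|(Sx_n)\le|f_n|(Tx_n)\to 0$. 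No gliding-hump extraction is needed.
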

\noindent
We also omit the straightforward proof of the following fact.

\begin{lemma}\label{DP are closed}
Let $E$ and $F$ be Banach lattices.
\begin{enumerate}[{\rm i)}]
\item
If $\text{\rm a-DP}(E,F)\ni T_k\stackrel{\|\cdot\|}{\to}T\in\text{\rm L}(E,F)$
then $T\in\text{\rm a-DP}(E,F)$.
\item
If $\text{\rm a-wDP}(E,F)\ni T_k\stackrel{\|\cdot\|}{\to}T\in\text{\rm L}(E,F)$
then $T\in\text{\rm a-wDP}(E,F)$.
\end{enumerate}
\end{lemma}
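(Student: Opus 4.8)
The plan is to run, for each part, the same uniform-closeness argument used in the proofs of Lemma~\ref{alimited are closed} and Lemma~\ref{a-G are closed}: the defining convergence for each $T_k$ is transferred to the limit $T$ by controlling the tail with the operator-norm distance $\|T_k-T\|$. The only structural input I need is that weakly null sequences (hence disjoint weakly null sequences) are norm bounded, so I may fix a uniform bound on the relevant test sequences once and for all, before choosing $k$.

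For i), I would take a disjoint \text{\rm w}-null sequence $(x_n)$ in $E$, so that $\|x_n\|\le M$ for some $M$ and all $n$, and aim to show $\|Tx_n\|\to 0$. Given $\varepsilon>0$, I first pick $k$ with $\|T_k-T\|\le\varepsilon$; since $T_k\in\text{\rm a-DP}(E,F)$, we have $\|T_kx_n\|\to 0$, so there is $n_0$ with $\|T_kx_n\|\le\varepsilon$ for $n\ge n_0$. The triangle inequality then yields $\|Tx_n\|\le\|T_kx_n\|+\|(T-T_k)x_n\|\le\varepsilon+M\varepsilon$ for $n\ge n_0$, and since $\varepsilon>0$ is arbitrary this gives $\|Tx_n\|\to 0$, i.e. $T\in\text{\rm a-DP}(E,F)$.

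For ii), I would take a \text{\rm w}-null $(f_n)$ in $F'$ and a disjoint \text{\rm w}-null $(x_n)$ in $E$; both are bounded, say $\|f_n\|\le M_1$ and $\|x_n\|\le M_2$, and the goal is $f_n(Tx_n)\to 0$. For $\varepsilon>0$, choose $k$ with $\|T_k-T\|\le\varepsilon$; since $T_k\in\text{\rm a-wDP}(E,F)$, there is $n_0$ with $|f_n(T_kx_n)|\le\varepsilon$ for $n\ge n_0$. Then $|f_n(Tx_n)|\le|f_n(T_kx_n)|+|f_n((T-T_k)x_n)|\le\varepsilon+M_1M_2\varepsilon$ for $n\ge n_0$, so $f_n(Tx_n)\to 0$ and $T\in\text{\rm a-wDP}(E,F)$.

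I expect no genuine obstacle: the argument is entirely elementary, the whole point being that the (disjoint) weakly null test sequences can be frozen while $k$ is chosen, because the operator-norm closeness $\|T_k-T\|$ controls the perturbation uniformly over those bounded sequences. The only care needed is to record the uniform bounds on the test sequences at the outset, which is exactly why boundedness of weakly null sequences is invoked first; everything else is the standard $\varepsilon$-split already displayed in Lemma~\ref{alimited are closed} and Lemma~\ref{a-G are closed}, which is presumably why the authors mark the proof as routine and omit it.
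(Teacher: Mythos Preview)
Your argument is correct and is exactly the routine $\varepsilon$-split the paper has in mind; the authors omit the proof precisely because it follows the template of Lemma~\ref{alimited are closed} and Lemma~\ref{a-G are closed}, which you have reproduced faithfully.
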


\noindent
In view of Lemma \ref{DP are closed},
the next result follows directly from Theorem \ref{P-norm}. 

\begin{theorem}\label{P-norm in DP}
For arbitrary Banach lattices $E$ and $F$, 
$\text{\rm r-a-DP}(E,F)$ and $\text{\rm r-a-wDP}(E,F)$
are both Banach spaces under their enveloping norms.
\end{theorem}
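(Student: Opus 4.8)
The plan is to read both claims off Theorem \ref{P-norm}. That theorem says: if ${\cal P}$ is a subspace of $\text{\rm L}(E,F)$ that is closed in the operator norm, then $\text{\rm r-}{\cal P}(E,F)$ is complete under the enveloping norm. Taking ${\cal P}=\text{\rm a-DP}(E,F)$ yields the first assertion and ${\cal P}=\text{\rm a-wDP}(E,F)$ the second, \emph{provided} each of these two operator classes is a norm-closed vector subspace of $\text{\rm L}(E,F)$. So the whole proof reduces to checking those two hypotheses for each class.

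First I would verify the vector-space structure, which is routine from linearity. If $S,T\in\text{\rm a-DP}(E,F)$ and $(x_n)$ is disjoint \text{\rm w}-null in $E$, then $\|(\lambda S+\mu T)x_n\|\le|\lambda|\,\|Sx_n\|+|\mu|\,\|Tx_n\|\to 0$, so $\lambda S+\mu T\in\text{\rm a-DP}(E,F)$. Likewise, if $S,T\in\text{\rm a-wDP}(E,F)$, $(f_n)$ is \text{\rm w}-null in $F'$, and $(x_n)$ is disjoint \text{\rm w}-null in $E$, then $f_n((\lambda S+\mu T)x_n)=\lambda f_n(Sx_n)+\mu f_n(Tx_n)\to 0$. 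Hence both classes are subspaces of $\text{\rm L}(E,F)$.

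The remaining hypothesis, closedness in the operator norm, is exactly Lemma \ref{DP are closed}: its part i) asserts that $\text{\rm a-DP}(E,F)$ is operator-norm closed, and part ii) asserts the same for $\text{\rm a-wDP}(E,F)$. With both hypotheses of Theorem \ref{P-norm} confirmed, I would invoke that theorem twice, once for each class, to conclude that $\text{\rm r-a-DP}(E,F)$ and $\text{\rm r-a-wDP}(E,F)$ are Banach spaces under their enveloping norms.

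There is no real obstacle here; the substantive work lives upstream. The completeness machinery, namely dominating an enveloping-norm Cauchy sequence term-by-term by summable positive operators drawn from ${\cal P}$ and keeping the operator-norm sums inside ${\cal P}$, is carried out once in Theorem \ref{P-norm}, while the only class-specific input is the stability of the a-DP and a-wDP conditions under operator-norm limits, isolated in Lemma \ref{DP are closed}. The one point worth a glance is that Theorem \ref{P-norm} demands closedness only in the operator norm, and that is precisely what Lemma \ref{DP are closed} supplies; since $\|\cdot\|_{\text{\rm r-}{\cal P}}\ge\|\cdot\|$ by (\ref{Enveloping P-norm 1}), an enveloping-norm Cauchy sequence is automatically operator-norm Cauchy, so operator-norm closedness is exactly what the argument consumes.
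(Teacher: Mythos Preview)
Your proposal is correct and follows essentially the same route as the paper: the paper simply says that, in view of Lemma~\ref{DP are closed}, the result follows directly from Theorem~\ref{P-norm}. Your only addition is the explicit (and harmless) verification that $\text{\rm a-DP}(E,F)$ and $\text{\rm a-wDP}(E,F)$ are subspaces, which the paper leaves implicit.
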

\noindent
The similar result yields for $\text{\rm r-DP}(E,F)$ and $\text{\rm r-wDP}(E,F)$.

%\subsection{Some conditions under which operators are \text{\rm a-DP}.}

\begin{proposition}\label{MW, a-DP, a-LW}
$\text{\rm MW}(E,Y)\subseteq\text{\rm a-DP}(E,Y)$.
\end{proposition}

\begin{proof}
Let $S\in\text{\rm MW}(E,Y)$. 
Assertion \ref{Meyer-Nieberg}~(i) implies $S'\in\text{\rm LW}(Y',E')$,
which yields $S'$ is semi-compact due to \cite[Thm.5.71]{AlBu}.
Thus, $S\in\text{\rm a-DP}(E,Y)$ by \cite[Thm.4.3]{AqBo}.
\end{proof}

\begin{assertion}\label{(a-LW)+ is in a-DP}
{\em (see, \cite[Prop.2.3]{BLM1})}
$\text{\rm a-LW}_+(E,F)\subseteq\text{\rm r-a-DP}_+(E,F)$.
\end{assertion}
\noindent
We include a proof of the following version 
of Assertion \ref{(a-LW)+ is in a-DP}. 

\begin{theorem}\label{(a-MW)' is a-DP}
If $T\in \text{\rm r-a-MW}(E,F)$ then $T'\in\text{\rm r-a-DP}(F',E')$.
\end{theorem}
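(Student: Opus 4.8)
The plan is to reduce the statement to positive operators and then chain two of the duality/inclusion results already established. Since $T\in\text{\rm r-a-MW}(E,F)$, by Definition~\ref{rP-operators} we may write $T=T_1-T_2$ with $T_1,T_2\in\text{\rm a-MW}(E,F)\cap\text{\rm L}_+(E,F)$; then $T'=T_1'-T_2'$, and each adjoint $T_i'$ is positive because $T_i\ge 0$. It therefore suffices to show $T_i'\in\text{\rm r-a-DP}_+(F',E')$ for $i=1,2$, since $\text{\rm r-a-DP}(F',E')$ is a vector space and in particular closed under differences.

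First I would apply the duality of Assertion~\ref{Bouras--Lhaimer--Moussa}(i): taking $S=T_i$ and $Y=F$ there, the hypothesis $T_i\in\text{\rm a-MW}(E,F)$ is equivalent to $T_i'\in\text{\rm a-LW}(F',E')$. Together with $T_i'\ge 0$ this yields $T_i'\in\text{\rm a-LW}_+(F',E')$.

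Next I would invoke Assertion~\ref{(a-LW)+ is in a-DP}, reading it with the Banach lattices $F'$ and $E'$ in place of $E$ and $F$, to obtain $\text{\rm a-LW}_+(F',E')\subseteq\text{\rm r-a-DP}_+(F',E')$. Hence $T_i'\in\text{\rm r-a-DP}_+(F',E')$, so each $T_i'$ is itself a difference of two positive \text{\rm a-DP}-operators; regrouping the resulting four positive summands gives $T'=T_1'-T_2'\in\text{\rm r-a-DP}(F',E')$.

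The argument is essentially bookkeeping of known facts, so I expect no deep obstacle. The points demanding care are the positivity of the adjoints (immediate from $T_i\ge 0$) and the legitimacy of the domain/codomain substitution in Assertion~\ref{(a-LW)+ is in a-DP}: that inclusion is stated for arbitrary Banach lattices, so it applies verbatim to the pair $(F',E')$. I would also verify that $\text{\rm r-a-DP}(F',E')$ is genuinely closed under differences, so that the two memberships $T_1',T_2'\in\text{\rm r-a-DP}_+$ combine to $T'\in\text{\rm r-a-DP}$; this follows since sums and positive scalar multiples of positive \text{\rm a-DP}-operators are again positive \text{\rm a-DP}-operators.
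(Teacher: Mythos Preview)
Your proof is correct, but it proceeds by a different route than the paper's. The paper also reduces to the case $0\le T\in\text{\rm a-MW}(E,F)$, but then argues \emph{directly} that $T'$ is \text{\rm a-DP}: given a disjoint \text{\rm w}-null $(f_n)$ in $F'$, it uses the Dodds--Fremlin criterion (Assertion~\ref{Dodds-Fremlin}(ii)) to show $\|T'f_n\|\to 0$, checking separately that $(|T'f_n|)$ is $\text{\rm w}^\ast$-null (via $|T'f_n|\le T'|f_n|$ and Assertion~\ref{disj w-null is mod w-null}) and that $(T'f_n)(x_n)\to 0$ for disjoint bounded $(x_n)$ in $E_+$ (this is exactly the \text{\rm a-MW} condition on $T$). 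Your argument instead chains Assertion~\ref{Bouras--Lhaimer--Moussa}(i) with Assertion~\ref{(a-LW)+ is in a-DP}, which is shorter and more conceptual but relies on two imported results. The paper's approach is more self-contained and in fact yields the slightly sharper conclusion that the adjoint of a positive \text{\rm a-MW}-operator is itself \text{\rm a-DP} (not merely \text{\rm r-a-DP}), so no regrouping of four summands is needed at the end.
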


\begin{proof}
It is enough to prove that $T'$ is \text{\rm a-DP} for each 
positive a-MW-operator $T:E\to F$. 
Let $0\le T\in\text{\rm a-MW}(E,F)$ and let $(f_n)$ be 
a disjoint \text{\rm w}-null sequence in $F'$. 
Then $(|f_n|)$ is disjoint \text{\rm w}-null in $F'$ 
by Assertion \ref{disj w-null is mod w-null}.
Since $T$ is a-MW then $f_n(Tx_n)=(T'f_n)(x_n)\to 0$ for each 
disjoint bounded sequence $(x_n)$ in $E$. 
By Assertion~\ref{Dodds-Fremlin}(ii), for proving $\|T'f_n\|\to 0$,
it remains to show that $(|T'f_n|)$ is \text{\rm w}$^\ast$-null.  
Since $T':F'\to E'$ is positive, $T'$ is continuous, 
and hence is \text{\rm w}-continuous. 
Thus, $(T'|f_n|)$ is \text{\rm w}-null, 
and hence is \text{\rm w}$^\ast$-null. 
The inequality $|T'f_n|\le T'|f_n|$ implies 
that $(|T'f_n|)$ is \text{\rm w}$^\ast$-null, as desired.
\end{proof}

\subsection{Some conditions on operators to be \text{\rm\bf a-G}.} 
\begin{proposition}\label{positive a-DP is a-G}
Let $E'$ have $\text{\rm o}$-continuous norm and $F\in\text{\rm (DGP)}$. Then
$$
    \text{\rm r-a-DP}(E,F)\subseteq\text{\rm r-a-G}(E,F).
$$
\end{proposition}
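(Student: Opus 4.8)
The plan is to reduce to the positive case and then prove the sharper statement that every positive almost Dunford--Pettis operator $T:E\to F$ is \emph{almost limited}; this is more than enough, since almost limited operators are almost Grothendieck. Indeed, any $T\in\text{\rm r-a-DP}(E,F)$ decomposes as $T=T_1-T_2$ with $T_1,T_2\in\text{\rm a-DP}(E,F)\cap\text{\rm L}_+(E,F)$, so once $T_1,T_2\in\text{\rm a-G}(E,F)\cap\text{\rm L}_+(E,F)$ is established, $T\in\text{\rm r-a-G}(E,F)$ follows immediately from the definition of a regularly ${\cal P}$-operator. I therefore fix $0\le T\in\text{\rm a-DP}(E,F)$ and a disjoint \text{\rm w}$^\ast$-null sequence $(f_n)$ in $F'$, and aim to prove $\|T'f_n\|\to 0$ by verifying the two conditions of Assertion~\ref{Dodds-Fremlin}(ii) for the sequence $(T'f_n)$ in $E'$.

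First I would show that $(|T'f_n|)$ is \text{\rm w}$^\ast$-null. Since $F\in\text{\rm (DGP)}$, the disjoint \text{\rm w}$^\ast$-null sequence $(f_n)$ is in fact \text{\rm w}-null in $F'$; being disjoint, Assertion~\ref{disj w-null is mod w-null} applied in the Banach lattice $F'$ yields that $(|f_n|)$ is \text{\rm w}-null, hence \text{\rm w}$^\ast$-null, in $F'$. As $T\ge 0$, the adjoint $T'$ is positive, so $|T'f_n|\le T'|f_n|$, and for every $x\in E_+$ one has $0\le|T'f_n|(x)\le(T'|f_n|)(x)=|f_n|(Tx)\to 0$; thus $(|T'f_n|)$ is \text{\rm w}$^\ast$-null. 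For the second condition, let $(x_n)$ be disjoint and bounded in $E_+$. The \text{\rm o}-continuity of the norm of $E'$ makes $(x_n)$ \text{\rm w}-null by Assertion~\ref{E' is o-cont}, and since $T$ is almost Dunford--Pettis we obtain $\|Tx_n\|\to 0$; writing $M:=\sup_n\|f_n\|<\infty$, the estimate $|(T'f_n)(x_n)|=|f_n(Tx_n)|\le M\|Tx_n\|$ gives $(T'f_n)(x_n)\to 0$. Assertion~\ref{Dodds-Fremlin}(ii) now delivers $\|T'f_n\|\to 0$, so $T$ is almost limited, and in particular $T\in\text{\rm a-G}(E,F)$; assembling the two summands gives $T_1-T_2\in\text{\rm r-a-G}(E,F)$.

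The first condition is where I expect the genuine difficulty to sit: passing from a disjoint \text{\rm w}$^\ast$-null sequence to the \text{\rm w}$^\ast$-nullity of its moduli is exactly the step that breaks down in general, as the sequence $e_{2n}-e_{2n+1}$ in $c'$ recorded after Assertion~\ref{disj w-null is mod w-null} illustrates, and it is the hypothesis $F\in\text{\rm (DGP)}$ that repairs it. I note that this hypothesis alone already secures the conclusion by a softer route, since $F\in\text{\rm (DGP)}$ gives $\text{\rm a-G}(E,F)=\text{\rm L}(E,F)$ by Theorem~\ref{DGP vs a-G}, so that the positive summands $T_1,T_2$ are automatically almost Grothendieck. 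The Dodds--Fremlin argument is nonetheless preferable here, as it is the one that genuinely uses the \text{\rm o}-continuity of $E'$ and returns the stronger information that the operators in question are in fact almost limited.
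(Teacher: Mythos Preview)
Your argument is correct. Both conditions of Assertion~\ref{Dodds-Fremlin}(ii) are verified cleanly, and the reduction to the positive case is handled properly. You in fact prove the stronger conclusion $\text{\rm r-a-DP}(E,F)\subseteq\text{\rm r-a-Lm}(E,F)$ under the stated hypotheses.

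The paper takes a different route: after reducing to $0\le T\in\text{\rm a-DP}(E,F)$, it invokes \cite[Thm.5.1]{AEW} to conclude that $T'\in\text{\rm a-DP}(F',E')$ (this is where the \text{\rm o}-continuity of $E'$ enters), then uses $F\in\text{\rm (DGP)}$ to turn the disjoint $\text{\rm w}^\ast$-null sequence $(f_n)$ into a disjoint \text{\rm w}-null one and applies the \text{\rm a-DP} property of $T'$ directly to get $\|T'f_n\|\to 0$. Your approach reaches the same endpoint but is more self-contained: instead of quoting the duality theorem for \text{\rm a-DP} operators, you reprove what is needed via the Dodds--Fremlin criterion. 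The paper's proof is shorter at the cost of an external reference; yours exposes the mechanism.

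Your closing remark is the most interesting point and deserves emphasis: by Theorem~\ref{DGP vs a-G}, the hypothesis $F\in\text{\rm (DGP)}$ already forces $\text{\rm a-G}(E,F)=\text{\rm L}(E,F)$, hence $\text{\rm r-a-G}(E,F)={\cal L}_r(E,F)\supseteq\text{\rm r-a-DP}(E,F)$ with no assumption on $E'$ whatsoever. So the \text{\rm o}-continuity of $E'$ is redundant for the proposition as stated; its only role, in either proof, is to upgrade the conclusion from ``almost Grothendieck'' to ``almost limited''.
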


\begin{proof}
It suffices to prove  
$\text{\rm a-DP}_+(E,F)\subseteq\text{\rm a-G}(E,F)$. 
Let $T\in\text{\rm a-DP}_+(E,F)$.
By \cite[Thm.5.1]{AEW}, $T'\in\text{\rm a-DP}(F',E')$.
Let $(f_n)$ be disjoint $\text{\rm w}^\ast$-null in $F'$. 
In order to prove $T\in\text{\rm a-G}(E,F)$, it remains to show that 
$(T'f_n)$ is \text{\rm w}-null in $E'$. Since $F\in\text{\rm (DGP)}$
then $(f_n)$ is disjoint $\text{\rm w}$-null in $F'$. Since 
$T'\in\text{\rm a-DP}(F',E')$ then $(T'f_n)$ is norm null
and hence is \text{\rm w}-null, as desired.
\end{proof}

\begin{proposition}\label{when r-a-G is L-r}
Let $E\in\text{\rm (a-DP}^\ast\text{\rm P)}$ and $F\in\text{\rm (d)}$. Then 
$$
   \text{\rm r-a-G}(E,F)={\cal L}_r(E,F).
$$
\end{proposition}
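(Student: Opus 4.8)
The plan is first to dispose of the trivial inclusion and then to isolate a single positive operator. Every almost Grothendieck operator is continuous and $\text{a-G}(E,F)$ is a linear subspace of $\text{L}(E,F)$, so $\text{r-a-G}(E,F)\subseteq{\cal L}_r(E,F)$ holds by the very definition of a regularly $\text{a-G}$-operator. Moreover, since $\text{a-G}(E,F)$ is a subspace and ${\cal L}_r(E,F)=\text{L}_+(E,F)-\text{L}_+(E,F)$, the reverse inclusion ${\cal L}_r(E,F)\subseteq\text{r-a-G}(E,F)$ holds the moment every positive operator lies in $\text{a-G}(E,F)$: writing $T=T_1-T_2$ with $T_i\in\text{L}_+(E,F)$ then exhibits $T$ as a difference of positive $\text{a-G}$-operators. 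Thus I would reduce the whole statement to proving $\text{L}_+(E,F)\subseteq\text{a-G}(E,F)$, and fix $0\le T\in\text{L}(E,F)$.

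Next I would extract everything that property $(\text{d})$ of $F$ and positivity of $T$ give for free. Let $(f_n)$ be disjoint $\text{w}^\ast$-null in $F'$. As $T\ge0$, the adjoint $T'$ is positive, hence $\text{w}^\ast$-$\text{w}^\ast$ continuous, so $(T'f_n)$ is already $\text{w}^\ast$-null in $E'$. Because $F\in(\text{d})$, the sequence $(|f_n|)$ is again $\text{w}^\ast$-null, positive and disjoint, whence $(T'|f_n|)$ is $\text{w}^\ast$-null in $E'_+$; combined with the domination $|T'f_n|\le T'|f_n|$ this makes $(|T'f_n|)$ a $\text{w}^\ast$-null sequence in $E'_+$. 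So the only thing still missing for $T\in\text{a-G}(E,F)$ is to promote this $\text{w}^\ast$-nullity to weak nullity of $(T'f_n)$ in $E'$.

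For the promotion I would bring in $E\in\text{(a-DP}^\ast\text{P)}$. Testing against an arbitrary $\phi\in E''$ and splitting $\phi=\phi^+-\phi^-$, it suffices to show $\langle|T'f_n|,\phi\rangle\to0$ for each $\phi\in E''_+$, since $|\langle T'f_n,\phi\rangle|\le\langle|T'f_n|,\phi\rangle$. Here I would try to route the positive $\text{w}^\ast$-null sequence $(|T'f_n|)$ into the defining pairing of $\text{(a-DP}^\ast\text{P)}$ — namely $\langle g_n,x_n\rangle\to0$ for $\text{w}^\ast$-null $(g_n)$ in $E'_+$ and disjoint $\text{w}$-null $(x_n)$ in $E_+$ — using the $\text{w}^\ast$-convergent formulation of Assertion~\ref{X has Dunford--Pettis* property}(ii) together with Assertion~\ref{disj w-null is mod w-null}, and exploiting the structural observation recorded after Proposition~\ref{b-bounded set is a-DP} that each order interval $[-\phi,\phi]$ is an a-L-set in $E''$ (so disjoint $\text{w}$-null sequences in $E'$ are uniformly null on it).

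The hard part will be exactly this last bridging step, and I expect it to be the real content of the proof. Property $\text{(a-DP}^\ast\text{P)}$ only controls the \emph{diagonal} pairing of a $\text{w}^\ast$-null sequence in $E'_+$ against a \emph{disjoint} $\text{w}$-null sequence in $E_+$, whereas weak nullity of $(T'f_n)$ demands control of its pairing against an \emph{arbitrary} $\phi\in E''_+$; neither $(\text{d})$ alone nor the order-interval a-L-set fact closes this gap, since both still speak about genuinely $\text{w}$-null (not merely $\text{w}^\ast$-null) inputs. Closing it is the crux, and I would attack it either by a gliding-hump/disjointification argument that manufactures, from a hypothetical failure $\langle|T'f_n|,\phi\rangle\not\to0$, a disjoint $\text{w}$-null sequence in $E_+$ contradicting $\text{(a-DP}^\ast\text{P)}$, or, equivalently, by showing that the $\text{w}^\ast$-null set $\{T'f_n:n\}$ is relatively weakly compact in $E'$ (a $\text{w}^\ast$-null relatively weakly compact sequence is automatically weakly null). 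The disjointness of the original $(f_n)$, preserved by $(\text{d})$ in the passage to $(|f_n|)$, is what I would rely on to make such an extraction legitimate.
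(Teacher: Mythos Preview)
Your reduction to a single positive $T$ and the derivation that $(|T'f_n|)$ is $\text{w}^\ast$-null in $E'_+$ match the paper exactly. Where you diverge, and where you are making the problem harder than it is, is in the promotion step.

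The paper never tries to test $(T'f_n)$ against arbitrary $\phi\in E''_+$. Instead it proves the \emph{stronger} conclusion $\|T'f_n\|\to 0$ directly, via the Dodds--Fremlin criterion, Assertion~\ref{Dodds-Fremlin}(ii): a sequence $(g_n)$ in $E'$ is norm null iff $(|g_n|)$ is $\text{w}^\ast$-null and $g_n(x_n)\to 0$ for every disjoint bounded $(x_n)$ in $E_+$. The first condition is already in hand; the second involves pairing only against sequences in $E$ itself, so the bidual-level obstacle you flag simply does not arise. This is the idea you are missing: by aiming for norm nullity rather than weak nullity, the required test descends from $E''$ down to $E$, which is precisely where $\text{(a-DP}^\ast\text{P)}$ lives. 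Your proposed gliding-hump or relative-weak-compactness attacks would, if carried out, essentially be reproving this piece of Dodds--Fremlin by hand.

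One caveat you may notice when you read the paper's argument: it invokes $\text{(a-DP}^\ast\text{P)}$ to obtain $|T'f_n|(x_n)\to 0$ for every disjoint \emph{bounded} $(x_n)$ in $E$, whereas Definition~\ref{Dunford--Pettis properties}\,a) literally assumes $(x_n)$ disjoint \emph{$\text{w}$-null}. So the paper's last line conceals a small step of its own; but it is a step confined entirely to the pairing between $E$ and $E'$, and is far more tractable than the $E''$-level gap you were preparing to close.
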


\begin{proof}
It suffices to prove that each positive operator $T:E\to F$ is \text{\rm a-G}. 
Let $0\le T\in{\cal L}(E,F)$,
and let $(f_n)$ be disjoint $\text{\rm w}^\ast$-null in $F'$.
Since $F\in\text{\rm (d)}$ then $(|f_n|)$ is also $\text{\rm w}^\ast$-null.
As $T':F'\to E'$ is positive, $T'$ is continuous and hence $T'$ is 
$\text{\rm w}^\ast$-continuous. 
Therefore $(T'|f_n|)$ is $\text{\rm w}^\ast$-null in $E'$.
It follows from $|T'f_n|\le T'|f_n|$ that
$(|T'f_n|)$ is $\text{\rm w}^\ast$-null. Since 
$E\in\text{\rm (a-DP}^\ast\text{\rm P)}$, 
$|T'f_n|(x_n)\to 0$ for each disjoint bounded $(x_n)$ in $E$.
By Assertion~\ref{Dodds-Fremlin}(ii), $\|T'f_n\|\to 0$  and hence 
$(T'f_n)$ is $\text{\rm w}$-null. Therefore $T$ is \text{\rm a-G}.
\end{proof}

\subsection{Miscellanea.}
Recall that a collection ${\cal P}$ of operators is said to be {\em injective} 
whenever $JT\in {\cal P}$
for $T\in {\cal P}$ and every isometric homomorphism $J$.
Let $(x_n)$ be a disjoint bounded sequence in $E$. 
Then $\|JTx_n\|=\|Tx_n\|\to 0$ if $T$ is \text{\rm MW} then so it is $JT$.
In particular, \text{\rm MW}, \text{\rm a-DP}, and \text{\rm o}-weakly 
compact operators are injective.

\begin{proposition}\label{DPO is oW}
Every \text{\rm DP}-operator $T:E\to Y$ is \text{\rm o}-weakly compact.
\end{proposition}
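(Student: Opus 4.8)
The plan is to verify order weak compactness through the classical disjoint-sequence criterion rather than by arguing directly about the weak closure of $T[0,x]$. Recall that a continuous operator $S:E\to Y$ is order weakly compact exactly when $\|Sx_n\|\to 0$ for every order bounded disjoint sequence $(x_n)$ in $E_+$ (this is the standard characterization; see \cite[Thm.5.57]{AlBu}). Hence it suffices to fix $x\in E_+$ together with a disjoint sequence $(x_n)$ in $[0,x]$ and to show that $\|Tx_n\|\to 0$; then $T[0,x]$ is relatively \text{\rm w}-compact for each $x\in E_+$, i.e. $T$ is \text{\rm o}-weakly compact.

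First I would observe that such a sequence $(x_n)$ is automatically \text{\rm w}-null. Indeed, the singleton $\{x\}$ is relatively \text{\rm w}-compact, and its solid hull is $\text{\rm sol}(\{x\})=[-x,x]$. By the fact recalled immediately before Assertion~\ref{Schur} (namely \cite[Thm.4.34]{AlBu}, that every disjoint sequence in the solid hull of a relatively \text{\rm w}-compact set is \text{\rm w}-null), every disjoint sequence contained in $[-x,x]$ is \text{\rm w}-null. Since $(x_n)\subseteq[0,x]\subseteq[-x,x]$ is disjoint, it is \text{\rm w}-null.

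Now I would invoke the hypothesis on $T$. Being a \text{\rm DP} (completely continuous) operator, $T$ carries \text{\rm w}-null sequences to norm null ones; applied to the \text{\rm w}-null $(x_n)$ this yields $\|Tx_n\|\to 0$. As both $x\in E_+$ and the disjoint sequence in $[0,x]$ were arbitrary, the criterion above is satisfied, and therefore $T$ is \text{\rm o}-weakly compact. Note that neither positivity nor regularity of $T$ enters, so the argument applies to every \text{\rm DP}-operator from a Banach lattice into an arbitrary Banach space $Y$.

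The only non-routine ingredient is the implication ``maps order bounded disjoint sequences to norm null sequences $\Longrightarrow$ order weakly compact'' of the characterization; this is the slightly technical direction, and I would lean on \cite{AlBu} for it rather than reproving it. Everything else collapses to the two cited Banach-lattice facts, so I expect no genuine obstacle once the disjoint-sequence criterion is in hand.
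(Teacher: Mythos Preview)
Your proposal is correct and follows essentially the same route as the paper: take a disjoint order bounded sequence, observe it is \text{\rm w}-null, apply the \text{\rm DP} hypothesis to get norm convergence to zero, and conclude via \cite[Thm.5.57]{AlBu}. The only difference is cosmetic---you spell out the reason for weak nullity via \cite[Thm.4.34]{AlBu}, whereas the paper states it without citation.
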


\begin{proof}
Let $(x_n)$ be a disjoint order bounded in $E$. 
Then $x_n\stackrel{\rm w}{\to}0$, and as $T$ is \text{\rm DP}, 
$\|Tx_n\|\to 0$. 
This yields $T$ is o-weakly compact by \cite[Thm.5.57]{AlBu}.
\end{proof}
\noindent
Theorem \ref{DPO is oW} extends to \text{\rm a-DP}-operators
as follows.

\begin{proposition}\label{extends to a-DPO}
Every \text{\rm a-DP}-operator $T:E\to X$ is \text{\rm b}-weakly compact. 
\end{proposition}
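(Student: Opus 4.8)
The plan is to follow the pattern of Proposition~\ref{DPO is oW}, replacing order bounded sequences by b-bounded ones and order boundedness in $E$ by order boundedness in the bidual $E''$. I would first invoke the disjoint-sequence characterization of b-weakly compact operators, the b-analogue of \cite[Thm.5.57]{AlBu}: an operator $T:E\to X$ is b-weakly compact if and only if $\|Tx_n\|\to 0$ for every disjoint b-bounded sequence $(x_n)$ in $E_+$ (cf. \cite{AAT}). With this reduction in hand, it suffices to fix an arbitrary disjoint b-bounded $(x_n)\subseteq E_+$ and prove $\|Tx_n\|\to 0$.

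The core step is to show that every such sequence is weakly null. Since $(x_n)$ is b-bounded and positive, there is $y\in E''_+$ with $0\le i(x_n)\le y$ for all $n$, where $i:E\to E''$ is the canonical embedding. The $x_n$ being pairwise disjoint and positive, $\sum_{k=1}^{N}x_k=\bigvee_{k=1}^{N}x_k$; as $i$ is a lattice homomorphism this yields $i\bigl(\sum_{k=1}^{N}x_k\bigr)=\bigvee_{k=1}^{N}i(x_k)\le y$ in $E''$. Hence for each $g\in E'_+$ and every $N$,
\[
   \sum_{k=1}^{N}g(x_k)=\Bigl\langle i\Bigl(\sum_{k=1}^{N}x_k\Bigr),g\Bigr\rangle\le\langle y,g\rangle=y(g),
\]
so the series $\sum_k g(x_k)$ converges and $g(x_k)\to 0$. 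Writing an arbitrary $g\in E'$ as $g=g_+-g_-$ gives $g(x_n)\to 0$ for all $g\in E'$, i.e. $(x_n)$ is weakly null.

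The conclusion is then immediate: $(x_n)$ is disjoint and weakly null, and $T$ is almost Dunford--Pettis, so $\|Tx_n\|\to 0$; by the characterization quoted above, $T$ is b-weakly compact.

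I expect the only real obstacle to be pinning down the disjoint-sequence characterization of b-weakly compact operators and citing it correctly, since it is the engine of the argument but is not reproduced in the excerpt. The weak nullity of disjoint b-bounded sequences is exactly the bidual version of the classical fact (used tacitly in Proposition~\ref{DPO is oW}) that disjoint order bounded sequences are weakly null, and the summation estimate above renders it self-contained; everything else is routine.
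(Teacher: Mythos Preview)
Your proposal is correct and follows essentially the same route as the paper: take a disjoint b-bounded sequence, observe it is weakly null, apply the a-DP hypothesis to get norm nullity, and conclude via the disjoint-sequence characterization of b-weak compactness. The paper's proof is terser---it asserts $x_n\stackrel{\rm w}{\to}0$ without justification and leaves the characterization implicit---whereas you spell out both the summation estimate giving weak nullity and the reference to \cite{AAT}; but the skeleton is identical.
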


\begin{proof}
Let $(x_n)$ be a disjoint \text{\rm b}-bounded sequence in $E$. 
Then $x_n\stackrel{\rm w}{\to}0$. Since $T$ is \text{\rm a-DP}, $(Tx_n)$
is norm null and $T$ is \text{\rm b}-weakly compact.
\end{proof}

\begin{proposition}\label{if DPO T is limited...}
If \ $\text{\rm DP}(E,F)\subseteq \text{\rm Lm}(E,F)$, 
then either $E'$ has $\text{\rm o}$-continu\-ous norm,
or every order bounded subset of $F$ is limited.
\end{proposition}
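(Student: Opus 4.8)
The plan is to prove the contrapositive: assuming the norm on $E'$ is not \text{\rm o}-continuous \emph{and} some order-bounded subset of $F$ fails to be limited, I would construct an operator $T\in\text{\rm DP}(E,F)\setminus\text{\rm Lm}(E,F)$, contradicting the hypothesis. The two failures feed the two halves of the construction. By Assertion~\ref{E' is o-cont}, non-\text{\rm o}-continuity of $E'$ yields a disjoint bounded sequence in $E$ that is not \text{\rm w}-null; since its positive and negative parts are again disjoint, one of them is a positive disjoint bounded sequence $(x_n)\subseteq E_+$, say $\|x_n\|\le 1$, that is not \text{\rm w}-null. Hence there is $\phi\in E'$ with $\phi(x_n)\not\to 0$, and passing to a subsequence and taking $\psi:=|\phi|\in E'_+$ gives $\psi(x_n)\ge\delta>0$ for all $n$ (as $x_n\ge 0$). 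On the target side, since every subset of a limited set is limited, some order interval $[-y,y]$ with $y\in F_+$ is not limited; by Assertion~\ref{limited and DP sets}(i) there are a \text{\rm w}$^\ast$-null sequence $(g_n)$ in $F'$ and vectors $b_n\in[-y,y]$ with $g_n(b_n)\ge\varepsilon>0$ (pass to a subsequence and flip signs of the $b_n$, which stay in $[-y,y]$).

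The heart of the argument is to manufacture from $(x_n)$ and $\psi$ a bounded coordinate operator $R\colon E\to\ell^1$. For each $n$ I would set $\psi_n(x):=\sup_k\psi(x\wedge kx_n)$ for $x\in E_+$ and extend linearly; this is the component of $\psi$ in the carrier of $x_n$, and additivity on $E_+$ follows from the Birkhoff inequalities $(x+z)\wedge 2a\ge(x\wedge a)+(z\wedge a)$ and $(x+z)\wedge a\le(x\wedge a)+(z\wedge a)$. Since the $x_n$ are disjoint, $\psi_n(x_m)=0$ for $m\ne n$ while $\psi_n(x_n)=\psi(x_n)\ge\delta$, and crucially $\sum_{n\le N}\psi_n\le\psi$ for every $N$, because the elements $x\wedge kx_n$ are pairwise disjoint and dominated by $x$, so $\sum_{n\le N}(x\wedge kx_n)\le x$. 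Consequently $\|Rx\|_1=\sum_n\psi_n(|x|)\le\psi(|x|)\le\|\psi\|\,\|x\|$, so $R$ is bounded. Let $J\colon\ell^1\to F$ be the bounded operator with $Je_n=b_n$ (bounded as $\|b_n\|\le\|y\|$), and put $T:=JR$. Because $\ell^1$ has the Schur property, the bounded—hence weak-to-weak continuous—map $R$ sends \text{\rm w}-null sequences to weakly null, therefore norm null, sequences in $\ell^1$; composing with $J$ shows $T\in\text{\rm DP}(E,F)$ irrespective of $J$.

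It remains to see that $T$ is not limited. Computing the adjoint gives $T'g_m=R'J'g_m=\sum_n g_m(b_n)\,\psi_n$ in $E'$, and evaluating at $x_m$ collapses the sum by disjointness: $(T'g_m)(x_m)=g_m(b_m)\,\psi_m(x_m)=g_m(b_m)\,\psi(x_m)\ge\varepsilon\delta$, whence $\|T'g_m\|\ge\varepsilon\delta>0$ although $(g_m)$ is \text{\rm w}$^\ast$-null. Thus $T$ is not limited (Definition~\ref{Main def of limited operators}), contradicting $\text{\rm DP}(E,F)\subseteq\text{\rm Lm}(E,F)$; this forces either \text{\rm o}-continuity of $E'$ or limitedness of every order-bounded subset of $F$. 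I expect the only delicate point to be the construction of the component functionals $\psi_n$ and the verification that their partial sums stay below $\psi$ (equivalently, that $R$ genuinely lands in $\ell^1$ with a uniform bound); everything else—the Schur-based proof that $T$ is Dunford--Pettis and the one-line lower estimate for $\|T'g_m\|$—is routine once $R$ and $J$ are in place.
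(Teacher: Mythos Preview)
Your proof is correct and follows the same overall strategy as the paper: construct a Dunford--Pettis operator factoring through $\ell^1$ (so that the Schur property guarantees it is DP), then show its adjoint fails to send a suitable $\text{w}^\ast$-null sequence to a norm-null one. The difference lies in how the coordinate map $E\to\ell^1$ is built. The paper exploits non-\text{o}-continuity of $E'$ directly in the dual: it takes a disjoint order-bounded sequence $(x'_n)$ in $E'_+$ with $\|x'_n\|=1$, and sets $Tx:=(x'_n(x))_n$, which lands in $\ell^1$ because $\sum_n x'_n\le x'$. You instead pull the disjointness down to $E$ via Assertion~\ref{E' is o-cont}, take a disjoint bounded $(x_n)$ in $E_+$ that is not \text{w}-null, and manufacture the coordinate functionals as the band components $\psi_n$ of a single $\psi\in E'_+$. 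This costs you the extra (but correctly executed) verification that the $\psi_n$ are additive and satisfy $\sum_{n\le N}\psi_n\le\psi$; the paper avoids that work by choosing disjoint functionals from the outset. Either route yields the same lower bound $|T'g_m(x_m)|\ge\varepsilon\delta$ after the dual computation, so the arguments are two realizations of the same idea.
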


\begin{proof}
It is enough to show that if the norm in $E'$ is not 
$\text{\rm o}$-continuous then, for each  $y\in F_+$, the order interval
$[0,y]$ is limited, that is $g_n(y_n)\to 0$ for each sequence $(y_n)$ in $[0,y]$ and 
each $\text{\rm w}^\ast$-null sequence $(g_n)$ in $F'$. Let the norm in $E'$ 
be not $\text{\rm o}$-continuous and $y\in F_+$.
There is a positive order bounded $(x'_n)$ in $E'$ 
with $\|x'_n\|=1$ for all $n\in{\mathbb N}$. Let $0\le x'_n \le x'$. 
Let $T:E\to\ell^1$ be defined as $Tx:=(x'_n(x))$. Since 
$
   \sum_{n=1}^\infty |x'_n(x)|\le\sum_{n=1}^\infty x'_n(|x|)\le 
   x'|x|\quad\text{for}\quad x\in E,
$
$T$ is well defined and takes values in $\ell^1$. Let $S:\ell^1\to F$ be defined as 
$S(\alpha_n):=\sum_{n=1}^\infty \alpha_n y_n$. Denote $U=S\circ T$, so
$Ux=\sum_{n=1}^\infty x'_n(x)y_n$ for $x\in E$. 
The operator $U$ is \text{\rm DP}. By the assumption, $U$ is limited. Observe that
$$
   |U'g_n|=\sum_{n=1}^\infty|g_n(y_i)|\cdot x'_i\ge|g_n(y_n)|\cdot x'_n\ge 0
$$
and
$
  |g_n(y_n)|=\|x'_n\|\cdot|g_n(y_n)|\le\|T'g_n\|\to 0.
$
Thus $[0,y]$ is limited.
\end{proof}

\begin{theorem}\label{when a-DPO has o-cont norm}
Let the norms in $E'$ and $F$ be \text{\rm o}-continuous. 
Then each order bounded \text{\rm a-DP}-operator $T:E\to F$ 
has \text{\rm o}-continuous norm.
\end{theorem}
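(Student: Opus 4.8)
The plan is to reduce the assertion to the M-weakly compact case and then reuse, word for word, the chain of implications already employed in Proposition~\ref{a-LWo has o-cont norm}. The crux is a single observation: once the norm of $E'$ is \text{\rm o}-continuous, every \text{\rm a-DP}-operator $E\to F$ is automatically an \text{\rm MW}-operator.

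First I would prove this reduction. Let $(x_n)$ be an arbitrary disjoint bounded sequence in $E$. Because the norm of $E'$ is \text{\rm o}-continuous, the equivalence (i)$\Leftrightarrow$(iii) of Assertion~\ref{E' is o-cont} guarantees that $(x_n)$ is \text{\rm w}-null. Since $T$ is \text{\rm a-DP}, it sends disjoint \text{\rm w}-null sequences to norm null ones, whence $\|Tx_n\|\to 0$. As $(x_n)$ was arbitrary, $T\in\text{\rm MW}(E,F)$ by Definition~\ref{Main MW operator}~a). I would also record that \text{\rm o}-continuity of the norm in $F$ makes $F$ Dedekind complete, so that, $T$ being order bounded by hypothesis, the modulus $|T|$ exists and Definition~\ref{T has o-cont norm} is applicable.

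It then remains to invoke the tail of the proof of Proposition~\ref{a-LWo has o-cont norm}: by \cite[Cor.3.6.14]{Mey} the \text{\rm MW}-operator $T$ is \text{\rm LW}, and by \cite[Thm.5.68]{AlBu} an order bounded \text{\rm LW}-operator has \text{\rm o}-continuous norm. This transfer is legitimate since the ambient hypotheses here---namely that $E'$ and $F$ both have \text{\rm o}-continuous norm---coincide exactly with those under which that chain was used in Proposition~\ref{a-LWo has o-cont norm}.

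I do not anticipate a genuine obstacle at the level of this paper. The only step demanding attention is the passage from \text{\rm a-DP} to \text{\rm MW}, which leans decisively on the \text{\rm o}-continuity of the norm in $E'$ through Assertion~\ref{E' is o-cont}; drop that hypothesis and a disjoint bounded sequence need no longer be \text{\rm w}-null, so the reduction fails. The substantive mathematics is concentrated in the cited implication $\text{\rm MW}\Rightarrow\text{\rm LW}$, but since it is quoted under matching hypotheses it contributes no new difficulty here.
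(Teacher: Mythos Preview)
Your proposal is correct and follows essentially the same route as the paper: the paper also shows that \text{\rm a-DP} together with \text{\rm o}-continuity of the norm in $E'$ forces $T$ to be \text{\rm MW} via Assertion~\ref{E' is o-cont}, then passes from \text{\rm MW} to \text{\rm LW} using \text{\rm o}-continuity of the norm in $F$, and concludes with \cite[Thm.5.68]{AlBu}. Your additional remark that Dedekind completeness of $F$ ensures $|T|$ exists is a welcome clarification the paper leaves implicit.
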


\begin{proof}
Let $T: E\to F$ be an order bounded \text{\rm a-DP}-operator. 
Let $(x_n)$ be a disjoint bounded sequence in $E$.
Then $(x_n)$ is \text{\rm w}-null by Assertion \ref{E' is o-cont}.
Since $T$ is \text{\rm a-DP}, $\|Tx_n\|\to 0$ and $T$ is MW. 
As $F$ has o-continuous norm,
$T$ is also LW. Thus by \cite[Thm.5.68]{AlBu}, $T$ has o-continuous norm. 
\end{proof}
\noindent
Following \cite[p.330]{AlBu}, we denote 
$$
  {\cal A}_T=\{S\in{\cal L}_{ob}(E,F):\exists n
  \in{\mathbb N}\quad\text{with}\quad |S|\le n|T|\}
$$
and
$$
  \text{\rm Ring}(T)=\text{cl}_{\|\cdot\|}\left(\{S\in\text{\rm L}(X,Y): 
  S=\sum^n_{i=1}R_iTS_i; S_i\in\text{\rm L}(X), R_i\in\text{\rm L}(Y)\}\right).
$$
We conclude with the following application of Theorem~\ref{when a-DPO has o-cont norm}. 

\begin{corollary}\label{pro Ring}
Let the norms in $E'$ and $F$ be \text{\rm o}-continuous, and let $E$ be either 
Dedekind $\sigma$-complete 
or have a quasi-interior point. Then ${\cal A}_T\subseteq \text{\rm Ring}(T)$
for each $T\in\text{\rm a-DP}_+(E,F)$.
\end{corollary}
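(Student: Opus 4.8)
The plan is to deduce the inclusion from the structural approximation theorem of Aliprantis and Burkinshaw, which guarantees that a positive operator with order continuous norm generates an order ideal sitting inside its ring ideal, provided the domain is Dedekind $\sigma$-complete or carries a quasi-interior point. Under this reading the whole task collapses to one point: certifying that $T$ itself has order continuous norm. Since the corollary is explicitly advertised as an application of Theorem~\ref{when a-DPO has o-cont norm}, that theorem is exactly the tool that supplies this point, and the remainder is bookkeeping to match hypotheses.

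First I would verify that the setting of Theorem~\ref{when a-DPO has o-cont norm} is in force. As $T\in\text{\rm a-DP}_+(E,F)$ is positive, it is order bounded, because $T[-x,x]\subseteq[-Tx,Tx]$ for every $x\in E_+$; hence $T\in{\cal L}_{ob}(E,F)$, the modulus satisfies $|T|=T$, and the set ${\cal A}_T$ is meaningful. Moreover, the order continuity of the norm in $F$ forces $F$ to be Dedekind complete, so that ${\cal L}_{ob}(E,F)$ is a vector lattice and the lattice-theoretic data entering the definition of ${\cal A}_T$ are available. With the norms in $E'$ and $F$ both order continuous and $T$ an order bounded \text{\rm a-DP}-operator, Theorem~\ref{when a-DPO has o-cont norm} applies and yields that $T$ has order continuous norm in the sense of Definition~\ref{T has o-cont norm}.

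Finally I would invoke the Aliprantis--Burkinshaw theorem (cf. \cite[Thm.5.85]{AlBu}, built on the notions introduced on \cite[p.330]{AlBu}): a positive operator $T:E\to F$ with $F$ Dedekind complete and order continuous norm satisfies ${\cal A}_T\subseteq\text{\rm Ring}(T)$ whenever $E$ is Dedekind $\sigma$-complete or possesses a quasi-interior point. All of these conditions having been secured in the previous paragraph, the desired inclusion ${\cal A}_T\subseteq\text{\rm Ring}(T)$ follows at once. I do not expect any genuine obstacle here: the dichotomy on $E$ enters solely to license the cited theorem, and the one substantive step is the passage to order continuity of the norm of $T$, which Theorem~\ref{when a-DPO has o-cont norm} delivers directly.
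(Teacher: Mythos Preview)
Your proposal is correct and follows exactly the paper's two-line argument: apply Theorem~\ref{when a-DPO has o-cont norm} to obtain that $T$ has order continuous norm, then invoke the Aliprantis--Burkinshaw result guaranteeing ${\cal A}_T\subseteq\text{\rm Ring}(T)$ under the stated hypotheses on $E$. The only discrepancy is the citation number---the paper points to \cite[Thm.5.70]{AlBu} rather than Thm.~5.85---but the mathematical route is identical, and your extra bookkeeping (positivity $\Rightarrow$ order boundedness, $F$ Dedekind complete) simply makes explicit what the paper leaves tacit.
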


\begin{proof}
By Theorem~\ref{when a-DPO has o-cont norm}, $T$ has order continuous norm. 
The rest follows from \cite[Thm.5.70]{AlBu}.
\end{proof}

{\tiny 
%%%%%%%%%%%%%%%%%%%%
}

\end{document}